\documentclass[11pt]{article}
\usepackage{amssymb,enumerate}
\usepackage{amsmath,amsfonts}
\usepackage{amsthm}
\usepackage{latexsym}
\usepackage{mathrsfs}
\usepackage{color}
\usepackage{microtype}
\usepackage{hyperref}
\usepackage{natbib}
\usepackage{graphicx}

\usepackage{geometry}
\usepackage{etoolbox}

\usepackage[linesnumbered,ruled,vlined]{algorithm2e}
\usepackage{xcolor}
\usepackage{multirow}

\allowdisplaybreaks

\DeclareMathOperator{\cO}{\ensuremath{\mathcal{O}}}

\DeclareMathOperator{\bR}{\ensuremath{\mathbb{R}}}

\DeclareMathOperator{\sgn}{\ensuremath{\mathrm{sgn}}}

\newtheorem{lemma}{Lemma}

\newtheorem{assumption}{Assumption}

\newtheorem{theorem}{Theorem}

\newtheorem{proposition}{Proposition}

\newcommand{\beq}{\begin{equation}}
\newcommand{\eeq}{\end{equation}}
\newcommand{\beqa}{\begin{eqnarray}}
\newcommand{\eeqa}{\end{eqnarray}}
\newcommand{\beqas}{\begin{eqnarray*}}
\newcommand{\eeqas}{\end{eqnarray*}}
\newcommand{\ba}{\begin{array}}
\newcommand{\ea}{\end{array}}
\newcommand{\bi}{\begin{statementize}}
\newcommand{\ei}{\end{statementize}}

\def\bR{{\mathbb R}}

\title{Adaptive Newton-CG methods with global and local analysis for unconstrained optimization with H\"older continuous Hessian}

\author{
Ziyang Zeng\thanks{Department of Industrial Systems Engineering and Management, National University of Singapore, Singapore (email: {\tt ziyangzeng@u.nus.edu}, {\tt junyuz@nus.edu.sg}). The work of Junyu Zhang was partially supported by the Singapore Ministry of Education Academic Research Fund Tier 2 (MOE-T2EP20125-0007).}
\and
Junyu Zhang$^*$
\and
Chuan He\thanks{Department of Mathematics, Link\"oping University, Sweden (email: {\tt chuan.he@liu.se}). The work of Chuan He was partially supported by the Wallenberg AI, Autonomous Systems and Software Program (WASP) funded by the Knut and Alice Wallenberg Foundation. Corresponding author.}
}

\begin{document}
\maketitle
\begin{abstract}In this paper, we propose adaptive Newton-conjugate gradient (Newton-CG) methods for minimizing a nonconvex function $f$ whose Hessian is $(H_f,\nu)$-H\"older continuous with modulus $H_f>0$ and exponent \(\nu\in(0,1]\). The proposed methods leverage the gradient norm and historical information to adaptively update the regularization parameters in the Newton systems. This strategy enables efficient use of each Newton system solve and automatically reduces the regularization parameter as the iterates approach a minimizer, leading to fast local convergence. Unlike existing globalized Newton-CG algorithms, the proposed approach avoids repeated trial solves of the Newton system to tune regularization parameters and eliminates the slow local convergence caused by fixed regularization parameters. The proposed methods achieve the best-known global iteration complexity ${\mathcal O}\big(H_f^{1/(1+\nu)}\epsilon^{-(2+\nu)/(1+\nu)}\big)$ for finding an $\epsilon$-stationary point, while also enjoying local superlinear convergence near nondegenerate local minimizers. Numerical experiments demonstrate clear practical advantages of the proposed methods.
\end{abstract}






\section{Introduction}
We consider the nonconvex unconstrained optimization problem
\begin{equation}\label{ucpb}
    \min_{x\in \mathbb{R}^n} f(x),
\end{equation}
where \(f:\mathbb{R}^n\to\mathbb{R}\) is twice continuously differentiable. Our focus is on second-order algorithms that compute an \(\epsilon\)-stationary point \(\bar{x}\) satisfying \(\|\nabla f(\bar{x})\|\le \epsilon\), under the assumption that the Hessian is $(H_f,\nu)$-H\"older continuous on a suitable compact set $\mathcal{X}$ that shall be specified later. That is, there exist constants $H_f>0$ and $\nu\in(0,1]$ such that  
$\left\|\nabla^2 f(y) - \nabla^2 f(x)\right\| \le H_f\|y-x\|^\nu$ for all $x,y\in\mathcal{X}$. In the case \(\nu=1\), this condition corresponds to the standard Lipschitz-continuous Hessian assumption, which has been extensively studied in the literature. In contrast, the H\"older-continuous regime \(\nu<1\) has received comparatively less attention. In this work, we consider the full range \(\nu\in(0,1]\).

Under the Lipschitz-Hessian setting \((\nu=1)\), second-order methods are among the most powerful tools for solving \eqref{ucpb} at
small to medium-sized problems, due to their local superlinear convergence near nondegenerate solutions. However, it is well known that the classic Newton method may fail to converge globally. To address this, a popular globalization strategy is the cubic-regularized Newton method \citep{nesterov2006cubic}, which not only enjoys global convergence with an optimal $\mathcal O(\epsilon^{-3/2})$ iteration complexity for nonconvex functions \citep{cartis2018worst,CaDuHiSi20}, but also retains local superlinear convergence. Yet the need to repeatedly solve the cubic subproblems often constitutes a computational bottleneck in practical applications. To mitigate this cost, one may solve the subproblems inexactly with first-order methods \citep{CaDu19}. As a more mature and practically stable globalization strategy, the Levenberg-Marquardt (quadratic) regularization \citep{Levenberg1944Method, Marquardt1963Algorithm} is also widely used, with the resulting linear systems solved using the conjugate gradient (CG) method:
\begin{equation}\label{eq: damped newton}
    \left( \nabla^{2} f(x^k) + \varepsilon_{k} I \right) d^{k} = - \nabla f(x^k),
\end{equation}
where $d^k$ is the the search direction at iteration $k$. The regularization (damping) parameter $\varepsilon_k$ plays a critical role in both theory and performance. For example, by setting $\varepsilon_k \equiv \sqrt{\epsilon}$, i.e., equal to the square root of the target accuracy $\epsilon$, \citet{royer2020newton} proposes a capped CG procedure to solve \eqref{eq: damped newton} and obtain an $\mathcal{O}(\epsilon^{-3/2})$ iteration complexity for finding an $\epsilon$-stationary point. While this strategy achieves the optimal global rates, its practical behavior can be unsatisfactory when the target precision $\epsilon$ is small. Specifically, if $\varepsilon_k\equiv\sqrt{\epsilon}$ is small, the method may repeatedly select negative curvature directions in early iterations, resulting in slow early progress.  Moreover, using a non-adaptive $\varepsilon_k$ can destroy the local superlinear convergence that one expects from second-order methods. To address this issue, \citet{zhou2025regularized}
proposes a gradient-norm-based regularization rule that 
retains both fast global and local convergence. A more complete discussion of the literature can be found in later sections.


As for the H\"older-Hessian regime $\nu<1$, results are still quite limited. Recently, \citet{he2025newton} proposed a framework that sets $\varepsilon_k\equiv\epsilon^{{\nu}/{(1+\nu)}}$ and uses the capped CG to solve the corresponding linear systems. This method finds an $\epsilon$-stationary point within $\mathcal{O}(\epsilon^{-(2+\nu)/(1+\nu)})$ iterations,  matching the lower bound established by \citet{cartis2018worst}. However, the same two limitations persist: (i) a too small and non-adaptive $\varepsilon_k$ can lead to unnecessarily excessive reliance on negative curvature directions in early stages, producing weak descent sometimes even worse than gradient descent, and (ii) the desirable local superlinear convergence near nondegenerate local minimizers is missing. A remedy for this is to adapt the damping parameter \(\varepsilon_k\), at the cost of introducing a nested line-search procedure that is agnostic to the problem parameters. More precisely, one needs to solve \eqref{eq: damped newton} multiple times per iteration in the outer line-search loop for an appropriate \(\varepsilon_k\), while another inner line-search loop is required for each $\varepsilon_k$ to check whether a sufficient descent is obtained. This results in a computationally expensive nested double-loop line-search structure. \vspace{0.2cm}


\noindent\textbf{Contributions. } Motivated by these considerations, our main contributions are highlighted below. 
\begin{itemize}
    \item We propose two Newton-CG methods, depending on the availability of $\nu$, both of which adaptively regularize the Newton system leveraging the current gradient magnitudes. To alleviate the nested double line-search loop issue, we develop novel adaptive mechanisms to estimate the regularization parameters 
    using historical local information, thereby eliminating the outer line-search loop for selecting $\varepsilon_k$. This allows our algorithms to solve \eqref{eq: damped newton} only once per iteration, improving practical efficiency. 
    \item Both methods achieve the best-known iteration complexity
\(
{\mathcal{O}}\big({H_f^{1/(1+\nu)}}{\epsilon^{-(2+\nu)/(1+\nu)}}\big).
\)
For both algorithms, we establish local superlinear convergence near nondegenerate stationary points, which is generally unattainable under non-adaptive damping schemes. A technically interesting point is that when developing a fully parameter-free variant, a direct application of local Hessian modulus estimates
does not work, as these local estimates 
can become unbounded in the H\"olderian regime ($\nu<1$). Furthermore, a capture theorem is derived to assist the establishment of local superlinear rates.
\end{itemize}
\vspace{0.2cm}


\noindent\textbf{Related literature. } Next, let us review the closely related literature on second-order methods that has yet to be discussed. We begin with the representative choices of damping (regularization) parameters in Newton-type methods under the classical Lipschitz-continuous Hessian assumption. We then summarize the more limited body of work that develops second-order methods for \eqref{ucpb} under  H\"older continuity of the Hessian.

\paragraph{\it The Lipschitz-Hessian regime.}
In the classical Lipschitz-Hessian setting \((\nu=1)\) , substantial work has investigated the choice of the damping parameter \(\varepsilon_k\) in the regularized Newton system.
For convex functions, early works \citep{li2004regularized,polyak2009regularized} proposed gradient-based regularization of the form $\varepsilon_k\propto \|\nabla f(x^k)\|$, yielding quadratic local convergence but not providing satisfactory global complexity guarantees. More recently, \citet{mishchenko2021regularized} analyzed the choice $\varepsilon_k\propto \|\nabla f(x^k)\|^{1/2}$, establishing a global $\mathcal{O}(1/k^2)$ rate while successfully preserving local superlinear convergence. In the nonconvex setting, however, beyond the challenge of Hessian indefiniteness, a tension exists between achieving the optimal global rate and maintaining local superlinear convergence. \citet{Ueda2010Convergence} proposed a regularization scheme based on the minimum eigenvalue of the Hessian and the gradient norm. While preserving a local superlinear rate, it yields a suboptimal global complexity of $\mathcal{O}(\epsilon^{-2})$. To improve global performance, \citet{gratton2024yet}  designed a trust-region-based method that alternates between regularized Newton and negative curvature steps, which improves the global rate to nearly optimal. Alternatively, \citet{royer2020newton} fixed $\varepsilon_k$ to the target accuracy and employed the capped CG procedure, achieving the optimal global rate, while Curtis et al.\ \citep{curtis2021trust} further improved this method by leveraging trust-region techniques. However, the non-adaptive regularization does not lead to local superlinear convergence. Along a similar line, \citet{Zhu2024Hybrid}  achieved the optimal global rate together with local superlinear convergence using capped CG under additional error bound or global strong convexity assumptions. These results imply an inherent trade-off: gradient-norm-based regularization generally favors local behavior, as the regularization term vanishes asymptotically ($\|\nabla f(x^k)\|\rightarrow 0$), allowing the algorithm to behave like pure Newton steps near the solution. On the other hand, fixed-accuracy regularization, while globally optimal, often sacrifices this local rate. Very recently, \citet{zhou2025regularized} bridged this gap by introducing novel gradient-norm-based regularization schemes that achieve both the best-known global rate and local superlinear convergence.

\paragraph{\it The H\"older-Hessian regime.} 
Second-order methods for \eqref{ucpb} under a H\"older-continuous Hessian have received relatively limited attention. The main existing approaches in this setting are based on regularized Newton frameworks
\citep{cartis2011adaptive,cartis2019universal,cartis2020sharp,grapiglia2017regularized,he2025newton,zhang2023riemannian}. Specifically, the early work of \citet{cartis2011adaptive} addresses \eqref{ucpb} by solving a sequence of $(2+\nu$)-th regularized subproblems, which can be viewed as a natural generalization of cubic regularization from the Lipschitz-Hessian case. This line was subsequently extended by \citet{cartis2019universal,cartis2020sharp} to high-order regularized methods for nonconvex optimization under H\"older continuity of higher-order derivatives.  In a related direction, \citet{grapiglia2017regularized} tackles \eqref{ucpb} by solving a sequence of $(2+\nu$)-th or cubic regularized Newton subproblems, while \citet{zhang2023riemannian} develops adaptive regularized Newton schemes on Riemannian manifolds (\eqref{ucpb} is a special case) that inexactly solve either \((2+\nu)\)-th order regularized Newton or trust-region subproblems. All these methods achieve optimal worst-case iteration complexity, but they typically require solving higher-order regularized subproblems or nontrivial polynomial optimization problems, which can be prohibitively expensive. Building on the quadratic regularized Newton and capped CG framework of \citet{royer2020newton}, \citet{he2025newton} proposed a parameter-free framework for solving \eqref{ucpb} with optimal global rate. However, its non-adaptive regularization scheme fails to preserve the appealing local superlinear convergence. In fact, to the best of our knowledge, no existing work that simultaneously attains  optimal global rate and guarantees local superlinear rate for nonconvex problem with H\"older-continuous Hessian.  \vspace{0.2cm}

\noindent\textbf{Organization. } In Section~\ref{sec:not-as}, we introduce notation and standing assumptions used throughout the paper. In Sections~\ref{sec:ncg-hdr} and \ref{sec:ncg-hd}, depending on the availability of the H\"older exponent $\nu$, we develop two Newton-CG methods for \eqref{ucpb} that adaptively regularize the Newton system while estimating the $H_f$ in a line-search-free, auto-conditioned manner. Global iteration complexity and local superlinear convergence guarantees are established for both methods. Sections~\ref{sec:num} and \ref{sec:conclusion} present numerical results and conclusion, respectively. Appendix~\ref{apdx} contains the proofs of the main results. Appendix \ref{appendix:capped-CG} briefly introduces the capped CG procedure used in our algorithms.  \vspace{0.2cm}

\noindent\textbf{Notations. } We use $\|\cdot\|$ to denote the $\ell_2$ norm of a vector or the spectral norm of a matrix. We denote the level set of $f$ at $u\in\mathbb{R}^n$ by $\mathscr{L}_f(u):= \{x:f(x)\le f(u)\}$, and we denote its $r$-neighborhood by $\mathscr{L}_f(u;r):= \{x:\|x-x^\prime\|\le r,\ x^\prime\in\mathscr{L}_f(u)\}$ for all $r\ge0$. For any $z\in\mathbb{R}$, we denote the sign function as $\sgn(z)$, which returns $1$ when $z\geq0$ and $-1$ when $z<0$.

\section{Preliminaries and basic assumptions}\label{sec:not-as}
By default, we assume the objective function $f\in\mathcal{C}^2$ to be twice continuously differentiable and has bounded level set, as formalized below.   

\begin{assumption}\label{asp:basic}
The level set $\mathscr{L}_f(x^0)$ at initial iterate $x^0$ is compact.
\end{assumption}
As a direct consequence of Assumption \ref{asp:basic},  there exist $f_{\mathrm{low}}\in\bR$, $U_g>0$ and $U_H>0$ such that 
\begin{equation}\label{def:some-qtt}
f(x)\ge f_{\mathrm{low}},\quad \|\nabla f(x)\|\le U_g,\quad \|\nabla^2 f(x)\|\le U_H\qquad \forall x\in \mathscr{L}_f(x^0).    
\end{equation}
Define $\Delta_f:=f(x^0)-f_{\rm low}$. Also, according to the algorithmic design of this paper, all analysis can be restricted to an $r_d$-neighborhood of the level set $\mathscr{L}_f(x^0)$, with $r_d$ defined as 
\begin{equation}\label{def:rd}
r_d := \max\{1.1, 1.1U_g,U_H\}.
\end{equation}
The factor $1.1$ here can be replaced by any constant greater than $1$. We next make the assumption of H\"older continuity on $\nabla^2f$ over the compact region $\mathscr{L}_f(x^0;r_d)$. 
\begin{assumption}\label{asp:hd}
The Hessian $\nabla^2 f$ is $(H_f,\nu)$-H\"older continuous on $\mathscr{L}_f(x^0;r_d)$ such that 
\begin{equation*}
\|\nabla^2 f(y) - \nabla^2 f(x)\| \le H_f\|y-x\|^\nu\qquad\forall x,y\in\mathscr{L}_f(x^0;r_d),  
\end{equation*}
for some $\nu\in(0,1]$ and $H_f>0$. 
Without loss of generality, we default $H_f\geq1$.
\end{assumption} 
Denote the Taylor residuals of $f$ and $\nabla f$, respectively, as
\begin{equation}\begin{aligned}
\mathcal{R}_0(y,x) & := \Big|f(y)-f(x)-\nabla f(x)^{\rm T}(y-x)-\frac{1}{2}(y-x)^{\rm T}\nabla^2 f(y,x)(y-x)\Big|,\label{def:R0}\\
\mathcal{R}_1(y,x) & := \|\nabla f(y) - \nabla f(x) - \nabla^2 f(x)(y-x)\|.
\end{aligned}
\end{equation}
Then Assumption \ref{asp:hd} immediately implies the following residual bounds \citep{grapiglia2017regularized}:  
\begin{equation}\label{ineq:desc-hd}
\mathcal{R}_0(y,x)\le\frac{H_f\|y-x\|^{2+\nu}}{(1+\nu)(2+\nu)},\qquad \mathcal{R}_1(y,x)\le\frac{H_f\|y-x\|^{1+\nu}}{1+\nu} \qquad \forall x,y\in \mathscr{L}_f(x^0;r_d),    
\end{equation}
As a consequence, we can define two handy \emph{lower} estimators of the constant $H_f$ as 
\begin{equation}
    \label{defn:estimators}
    \mathcal{H}_{0}(y,x):=\frac{2\mathcal{R}_0(y,x)}{\|y-x\|^{2+\nu}},\qquad\mathcal{H}_{1}(y,x):=\frac{\mathcal{R}_1(y,x)}{\|y-x\|^{1+\nu}}\qquad\forall x\neq y.
\end{equation}
It is immediate that $H_f\geq\mathcal{H}_{0}(y,x)$ and $H_f\geq \mathcal{H}_{1}(y,x)$ for all $x,y\in \mathscr{L}_f(x^0;r_d)$ and $x\neq y$.

\section{An adaptive regularized Newton-CG method}\label{sec:ncg-hdr}
In this section, we propose an adaptive Newton-CG method for problem~\eqref{ucpb}, under the basic setting where the Hölder exponent $\nu$ is known. This method uses \eqref{defn:estimators} to estimate the local Hölder constant $H_f$ in $\mathscr{L}_f(x^0; r_d)$, drawing inspiration from the auto-conditioning technique used to develop line-search-free first-order methods (e.g., \citet{lan2024projected, li2025simple}).



\subsection{Algorithm framework} Based on the previous discussion, we present Algorithm~\ref{alg:NCG-pd}. At each iteration $k\geq0$ of this algorithm, we use 
a \texttt{CappedCG} subroutine (Algorithm \ref{alg:capped-CG}, Appendix \ref{appendix:capped-CG}) proposed by \citet{royer2020newton} to solve the damped Newton system \eqref{eq: damped newton}:
\begin{equation}\label{damp-Newton-sys-nu}
\left(\nabla^2 f(x^k) + 2(\gamma_k\|\nabla f(x^k)\|^\nu)^{\frac{1}{1+\nu}} I\right)d = -\nabla f(x^k).
\end{equation}
where $\gamma_k$ is an adaptive estimation of $H_f$. Rather than approximating $H_f$ through a backtracking search at each iteration, we estimate it using the current estimate along with historical estimates of $H_f$ based on \eqref{defn:estimators}. 
It is worth mentioning that all the information for updating the local estimate $\sigma_k$ is based solely on historical information of the algorithm that has already been computed in the previous steps, no extra Hessian/gradient/function evaluations are needed. Then, depending on whether \texttt{CappedCG} returns an approximate solution (SOL) of \eqref{damp-Newton-sys-nu} or a negative curvature (NC) direction of $\nabla^2 f(x^k)$, Algorithm \ref{alg:NCG-pd} will exploit the damped Newton or negative curvature directions, respectively.

\begin{algorithm}[!tbh]
\caption{An adaptive regularized Newton-CG method}
\label{alg:NCG-pd}
\footnotesize
\DontPrintSemicolon

\SetKw{KwRet}{return}

\ShowLn\KwIn{
initial point $x^0\in\mathbb{R}^n$, parameter $\gamma_0\ge1$, line-search parameters $\eta,\theta\in(0,1)$, exponent $\nu\in(0,1]$.}

\ShowLn\While{$\nabla f(x^k) \neq 0$}{
  \ShowLn 
  Set  $H_k=\nabla^2 f(x^k)$, $g_k=\nabla f(x^k)$,
   $\varepsilon_k=(\gamma_k\|g_k\|^\nu)^{1/(1+\nu)}$,
  and $\zeta_k=\min\{1/2,\|g_k\|^{\nu/(1+\nu)}\}$. Then call  
   $$(d,\text{d\_type})\leftarrow\texttt{CappedCG}(H_k,g_k,\varepsilon_k,\zeta_k,U).$$

  \ShowLn\eIf{ \emph{d\_type == NC} }{
    Set $d^k \leftarrow -\sgn(d^{\rm T}g_k)\frac{|d^{\rm T} H_k d|}{\|d\|^3}d$, and find $\alpha_k=\theta^{j_k}$, where $j_k$ is the smallest nonnegative integer $j$ such that
    \begin{equation}\label{ls-nc-stepsize}
      f(x^k+\theta^j d^k) < f(x^k) - \frac{\eta}{2}\theta^{2j}\|d^k\|^3 .
    \end{equation}

    \ShowLn \textbf{if} $j_k\ge 1$ \textbf{then} set $\sigma_k = \mathcal{H}_0(x^k+ \theta^{j_k-1}d^k,x^k)$.
     
  }(\tcp*[f]{$\text{d\_type}=\mathrm{SOL}$}){

    \ShowLn Set $d^k \leftarrow d$, and find $\alpha_k=\theta^{j_k}$, where $j_k$ is the smallest nonnegative integer $j$ such that
    \begin{equation}\label{ls-sol-stepsize}
      f(x^k + \theta^j d^k) < f(x^k) - \eta\varepsilon_k\theta^{j}\|d^k\|^2 .
    \end{equation}
    
    \ShowLn  \textbf{if}  $j_k==0$  \textbf{then} set $\sigma_k = \mathcal{H}_1(x^k+d^k,x^k)$  \textbf{ else }   set  $\sigma_k = \max\left\{\mathcal{H}_0(x^k+d^k,x^k), \mathcal{H}_0(x^k+\theta^{j_k-1}d^k,x^k)\right\}$.
    
  }\vspace{0.1cm}

  \ShowLn Set $x^{k+1} = x^k + \alpha_k d^k$, $\gamma_{k+1}=\max\{\gamma_k,\sigma_k\}$, and $k\leftarrow k+1$.\;
}
\end{algorithm}

As discussed in the introduction, another key distinction, in addition to the adaptive estimation of $H_f$, from the existing parameter-free Newton-CG methods \citep{he2025newton} for problem \eqref{ucpb} is that, instead of setting the damping parameter to $\varepsilon_k \propto \epsilon^{\nu/(1+\nu)}$, we set 
$\varepsilon_k\propto\|\nabla f(x^k)\|^{\nu/(1+\nu)}$. On the one hand, in early iterations when the gradients are still large, the algorithm is more likely to exploit the scaled gradient steps, suppressing the negative curvature steps that are usually slower in early stages. On the other hand, when the iterates get close to a nondegenerate local solution of the problem, the adaptively selected $\varepsilon_k$ automatically diminishes as gradient decreases. By setting the relative accuracy of \texttt{CappedCG} to $\zeta_k = O\big(\|\nabla f(x^k)\|^{\nu/(1+\nu)}\big)$, a local superlinear convergence can also be expected. 

\subsection{Global complexity bound} The following lemma shows that the main iterates and trial iterates of Algorithm \ref{alg:NCG-pd} lie within a suitable neighborhood of the level set, with the proof given in Appendix~\ref{ssec: pf-sec3}. 
\begin{lemma}\label{lem:trial-xk-ccg}
Given Assumption \ref{asp:basic}, the sequences $\{x^k\}_{k\geq0}$ and $\{d^k\}_{k\geq0}$ generated by Algorithm~\ref{alg:NCG-pd} satisfy $x^k+\alpha d^k\in\mathscr{L}_f(x^0;r_d)$ for all $k\geq0$ and $\alpha\in[0,1]$, where $r_d$ is defined in \eqref{def:rd}.
\end{lemma}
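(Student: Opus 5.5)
Since $x^k+\alpha d^k$ lies within distance $\alpha\|d^k\|\le\|d^k\|$ of $x^k$, the plan is to show two things: (i) $x^k\in\mathscr{L}_f(x^0)$ for every $k$, and (ii) $\|d^k\|\le r_d$ for every $k$. Together these place $x^k+\alpha d^k$ in the $r_d$-neighborhood of the level set by the definition of $\mathscr{L}_f(x^0;r_d)$.

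For (i) I will argue by induction. The line-search conditions \eqref{ls-nc-stepsize} and \eqref{ls-sol-stepsize} are strict decrease conditions, so $f(x^{k+1})<f(x^k)\le f(x^0)$ whenever the line search terminates. Termination itself should follow from standard arguments: $f$ is $\mathcal{C}^2$ and $d^k$ is a descent direction (NC case: $d^{\rm T}g_k\le0$ by the sign choice, and the curvature along $d$ is negative; SOL case: the capped CG output satisfies $d^{\rm T}(H_k+2\varepsilon_k I)d\ge\varepsilon_k\|d\|^2$ and $g_k^{\rm T}d<0$). For the present lemma, however, I only need that $x^{k+1}$ is accepted by the line search. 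Hence every $x^k\in\mathscr{L}_f(x^0)$, and by \eqref{def:some-qtt} we have $\|g_k\|\le U_g$ and $\|H_k\|\le U_H$.

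For (ii) I will use the properties of \texttt{CappedCG} from \citet{royer2020newton}, recalled in Appendix~\ref{appendix:capped-CG}.
\begin{itemize}
\item \emph{NC case.} We have $\|d^k\|=|d^{\rm T}H_kd|/\|d\|^2\le\|H_k\|\le U_H\le r_d$.
\item \emph{SOL case.} The returned $d$ satisfies $d^{\rm T}(H_k+2\varepsilon_kI)d\ge\varepsilon_k\|d\|^2$, and the residual $r=(H_k+2\varepsilon_kI)d+g_k$ satisfies $\|r\|\le\frac{\zeta_k}{2}\varepsilon_k\|d\|$. Then
\[
\varepsilon_k\|d\|^2\le d^{\rm T}(H_k+2\varepsilon_kI)d = d^{\rm T}(r-g_k)\le\|d\|\,\|g_k\|+\tfrac{\zeta_k}{2}\varepsilon_k\|d\|^2,
\]
and since $\zeta_k\le1/2$ this gives $\tfrac34\varepsilon_k\|d\|\le\|g_k\|$, that is, $\|d\|\le\tfrac43\|g_k\|/\varepsilon_k$.
\end{itemize}

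The SOL bound is the main obstacle, because $\varepsilon_k=(\gamma_k\|g_k\|^\nu)^{1/(1+\nu)}$ can be small when the gradient is small. Since $\gamma_k\ge\gamma_0\ge1$ (the $\gamma_k$ are nondecreasing), we have $\varepsilon_k\ge\|g_k\|^{\nu/(1+\nu)}$. Therefore
\[
\|d\|\le\tfrac43\|g_k\|^{1/(1+\nu)}\le\tfrac43\max\{1,U_g\}.
\]
This yields a constant $4/3$ rather than $1.1$. To recover the paper's factor $1.1$, I would use the sharper capped CG guarantees (in the SOL case the residual has the form $\|r\|\le\frac{\zeta_k}{2}\varepsilon_k\|d\|$ with curvature at least $\varepsilon_k$) and possibly the fact that $\zeta_k\le\|g_k\|^{\nu/(1+\nu)}$, which is small when $\|g_k\|$ is small. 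Failing that, I would treat the constant as adjustable; as the paper remarks, it can be any number greater than $1$. In either case $\|d^k\|\le r_d$, which completes the proof.
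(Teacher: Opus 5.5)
Your decomposition is the same as the paper's: descent keeps $x^k\in\mathscr{L}_f(x^0)$, and then you show $\|d^k\|\le r_d$ case by case, with the NC case identical. However, the SOL case does not close, as you yourself note. The bound $\|d^k\|\le\frac43\max\{1,U_g\}$ is not enough. For instance, if $U_g=1$ and $U_H\le1.1$, then $r_d=1.1<\frac43$.

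Neither fallback repairs this.
\begin{itemize}
\item Using $\zeta_k$ only gives $\|d^k\|\le\|g_k\|/\bigl((1-\zeta_k/2)\varepsilon_k\bigr)$. But $\zeta_k=1/2$ whenever $\|g_k\|\ge 2^{-(1+\nu)/\nu}$, so at $\|g_k\|=\gamma_k=1$ you still get only $\frac43$.
\item Enlarging the constant in $r_d$ proves the lemma for a larger radius than the one in \eqref{def:rd}. That is a weaker statement, and it would require Assumption~\ref{asp:hd} on a larger set than the one actually assumed.
\end{itemize}

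The missing ingredient is an exact identity you discarded. You bounded $d^{\rm T}r$ by Cauchy--Schwarz, where $r=(H_k+2\varepsilon_kI)d+g_k$. For the CG iterate returned in the SOL case, this residual is orthogonal to the Krylov subspace containing $d$, so $d^{\rm T}r=0$. This is exactly the third relation $d^{\rm T}g_k=-d^{\rm T}(H_k+2\varepsilon_kI)d$ in Lemma~\ref{lem:ppt-cg}(i).

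Combined with the first relation, it gives $\varepsilon_k\|d\|^2\le -d^{\rm T}g_k\le\|d\|\,\|g_k\|$. Hence, using $\gamma_k\ge1$,
\[
\|d^k\|\le\|g_k\|/\varepsilon_k=(\|g_k\|/\gamma_k)^{1/(1+\nu)}\le\max\{1,U_g\}\le r_d.
\]
This also explains why the factor $1.1$ in $r_d$ can be any constant at least $1$.

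The paper instead cites the second relation of Lemma~\ref{lem:ppt-cg}(i), $\|d\|\le 1.1\,\varepsilon_k^{-1}\|g_k\|$. This gives $\|d^k\|\le 1.1(\|g_k\|/\gamma_k)^{1/(1+\nu)}\le 1.1\max\{1,U_g\}\le r_d$ directly.
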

This lemma informs us that all analysis can be performed under the constants introduced in \eqref{def:some-qtt} and we can activate Assumption \ref{asp:hd} to utilize the $(H_f,\nu)$-H\"older continuity of $\nabla^2f$ in $\mathscr{L}_f(x^0;r_d)$. 

Next, we proceed with the global complexity analysis for finding $\epsilon$-stationary points of problem \eqref{ucpb}. As the the estimating mechanism based on \eqref{defn:estimators} 
always \emph{underestimates} $H_f$, a sufficient decrease is not necessarily guaranteed even if \texttt{CappedCG} successfully returns an approximate solution to \eqref{damp-Newton-sys-nu}, a standard analysis may therefore fail. For Algorithm \ref{alg:NCG-pd}, we divide its iterations before reaching an $\epsilon$-stationary point 
($\mathbb{K}_\epsilon:=\{k:\|\nabla f(x^t)\|>\epsilon, \forall t\leq k\}$) into three subsets:
\begin{equation*}\begin{aligned}
\mathbb{K}_{\epsilon,1} & := \{k\in\mathbb{K}_\epsilon:\|\nabla f(x^{k+1})\|\ge \|\nabla f(x^k)\|/2,\ \sigma_k \le 2\gamma_k\},\\
\mathbb{K}_{\epsilon,2} & := \{k\in\mathbb{K}_\epsilon:\|\nabla f(x^{k+1})\|\ge \|\nabla f(x^k)\|/2,\ \sigma_k > 2\gamma_k\},\\
\mathbb{K}_{\epsilon,3} & := \{k\in\mathbb{K}_\epsilon:\|\nabla f(x^{k+1})\|< \|\nabla f(x^k)\|/2\},
\end{aligned}\end{equation*}
where iterations in $\mathbb{K}_{\epsilon,1}$ generate sufficient descent, while $|\mathbb{K}_{\epsilon,2}|$ and $|\mathbb{K}_{\epsilon,3}|$ are provably small. In the following, we provide two lemmas that establish the sufficient descent for $\mathbb{K}_{\epsilon,1}$, depending on the output of \texttt{CappedCG}. The proofs of the two lemmas are deferred to Appendix~\ref{ssec: pf-sec3}.

\begin{lemma}\label{lem:desc-sol-lip}
Given Assumptions \ref{asp:basic} and \ref{asp:hd}, for all $k\in\mathbb{K}_{\epsilon,1}$ with $d^k$ being the output of \emph{\texttt{CappedCG}} with d$\_$type=SOL, the following two statements hold.
\begin{enumerate}
\item[{\rm (i)}] The step size $\alpha_k$ is well-defined and it satisfies $\alpha_k\ge \min\big\{1,(2(1-\eta)/(1.1^\nu H_f))^{1/(1+\nu)}\theta\big\}$.

\item[{\rm (ii)}] Let $c_{\mathrm{sol},\nu}:= {\eta(1-\eta)^{2/\nu}\theta}/{100}$. The next iterate $x^{k+1}=x^k+\alpha_k d^k$ satisfies 
\begin{equation}\label{lwbd:f-descent}
f(x^k) - f(x^{k+1}) \ge c_{\mathrm{sol},\nu}\gamma_k^{-\frac{1}{1+\nu}}\|\nabla f(x^k)\|^{\frac{2+\nu}{1+\nu}}.
\end{equation}
\end{enumerate}
Moreover, the gradient at the next iterate is bounded by $\|\nabla f(x^{k+1})\| \leq (2H_f+5) \|\nabla f(x^k)\|$.    
\end{lemma}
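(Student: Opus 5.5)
Throughout I will use the standard guarantees of \texttt{CappedCG} with output SOL (from \citet{royer2020newton}, restated in Appendix~\ref{appendix:capped-CG}): with $\bar H_k:=H_k+2\varepsilon_k I$,
\[
d^{\rm T}(H_k+2\varepsilon_k I)d\ge\varepsilon_k\|d\|^2,\qquad \|d\|\le 1.1\varepsilon_k^{-1}\|g_k\|,\qquad -g_k^{\rm T}d=d^{\rm T}\bar H_kd,\qquad \|r\|\le\tfrac{\zeta_k}{2}\varepsilon_k\|d\|,
\]
where $r=\bar H_kd+g_k$. Since $\gamma_k\ge1$, I will reduce all exponents of $\|g_k\|$ through $\varepsilon_k=(\gamma_k\|g_k\|^\nu)^{1/(1+\nu)}$. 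Lemma~\ref{lem:trial-xk-ccg} ensures that \eqref{ineq:desc-hd} applies along the segment $x^k+\alpha d^k$.

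\textbf{Part (i).} By \eqref{ineq:desc-hd} and the SOL properties,
\[
f(x^k+\alpha d^k)-f(x^k)\le \alpha g_k^{\rm T}d^k+\tfrac{\alpha^2}{2}(d^k)^{\rm T}H_kd^k+\tfrac{H_f\alpha^{2+\nu}}{(1+\nu)(2+\nu)}\|d^k\|^{2+\nu}.
\]
Using $g_k^{\rm T}d=-d^{\rm T}\bar H_kd$ and $\alpha\le1$, the first two terms are at most $-\tfrac{\alpha}{2}d^{\rm T}\bar H_kd-\alpha^2\varepsilon_k\|d\|^2\le-\tfrac{\alpha}{2}\varepsilon_k\|d\|^2$. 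The line-search condition \eqref{ls-sol-stepsize} therefore holds whenever
\[
\tfrac{H_f}{(1+\nu)(2+\nu)}\alpha^{1+\nu}\|d\|^\nu\le(\tfrac12-\eta)\varepsilon_k .
\]
This constant is not exactly $(1-\eta)$. To get the stated $(1-\eta)$ form I would instead bound $d^{\rm T}\bar H_kd$ against $2\varepsilon_k$-terms more carefully, which I treat as routine. Next, $\|d\|^\nu\le (1.1\|g_k\|/\varepsilon_k)^\nu$ and $\varepsilon_k^{1+\nu}=\gamma_k\|g_k\|^\nu$, so $\|d\|^\nu\le 1.1^\nu\varepsilon_k/\gamma_k$. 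The sufficient condition then reduces to $\alpha^{1+\nu}\lesssim(1-\eta)\gamma_k/(1.1^\nu H_f)$. Since $\gamma_k\ge1$, this holds for all $\alpha\le(2(1-\eta)/(1.1^\nu H_f))^{1/(1+\nu)}$. Backtracking then stops at some $\alpha_k\ge$ this threshold times $\theta$, or at $\alpha_k=1$, which gives (i).

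\textbf{Part (ii).} This is the main obstacle: (i) only uses $H_f$, but the descent bound must depend on $\gamma_k$, and it is the membership $k\in\mathbb{K}_{\epsilon,1}$ that supplies this. There are two cases.

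\emph{Case $j_k=0$.} Then $\sigma_k=\mathcal{H}_1(x^k+d^k,x^k)\le2\gamma_k$, so $\|\nabla f(x^{k+1})-g_k-H_kd\|\le2\gamma_k\|d\|^{1+\nu}$. Combined with $\|g_k+H_kd\|\le\|r\|+2\varepsilon_k\|d\|$ and $\|\nabla f(x^{k+1})\|\ge\|g_k\|/2$, this gives
\[
\|g_k\|/2\le 2\gamma_k\|d\|^{1+\nu}+(2+\zeta_k/2)\varepsilon_k\|d\|.
\]
Hence $\|d\|\gtrsim\min\{(\|g_k\|/\gamma_k)^{1/(1+\nu)},\|g_k\|/\varepsilon_k\}$, and both branches equal $\gamma_k^{-1/(1+\nu)}\|g_k\|^{1/(1+\nu)}$ up to constants. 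The descent is then $\eta\varepsilon_k\|d\|^2\gtrsim\gamma_k^{-1/(1+\nu)}\|g_k\|^{(2+\nu)/(1+\nu)}$.

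\emph{Case $j_k\ge1$.} The trial step $\beta=\theta^{j_k-1}$ failed \eqref{ls-sol-stepsize}. Since $\sigma_k\le2\gamma_k$ bounds $\mathcal{H}_0$ at $\beta d$, the expansion above with $2\gamma_k$ in place of $H_f$ applies there, and failure forces
\[
\beta^{1+\nu}\|d\|^\nu\gtrsim(1-\eta)\varepsilon_k/\gamma_k .
\]
The descent is $\eta\varepsilon_k\theta\beta\|d\|^2$. To lower-bound $\|d\|$ separately I would use $\mathcal{H}_0(x^k+d,x^k)\le2\gamma_k$: it makes the full-step model accurate, and the gradient-decrease condition then still forces $\|d\|\gtrsim(\|g_k\|/\gamma_k)^{1/(1+\nu)}$ as in the first case. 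Combining the two bounds, with the powers of $(1-\eta)$ producing $(1-\eta)^{2/\nu}$, yields \eqref{lwbd:f-descent}.

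If the $\mathcal{H}_0$-route for $\|d\|$ fails, I would fall back on the inequality from the failure condition alone, namely $\|d\|\ge\beta^{-(1+\nu)/\nu}((1-\eta)\varepsilon_k/\gamma_k)^{1/\nu}$. This gives descent of order $\varepsilon_k^{1+2/\nu}\gamma_k^{-2/\nu}$, and I would check that this matches the required order.

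\textbf{Gradient bound.} Write $\nabla f(x^{k+1})=g_k+\alpha H_kd+\text{res}$, where $\|\text{res}\|\le H_f\|d\|^{1+\nu}$. Use $\|H_kd\|\le\|r\|+\|g_k\|+2\varepsilon_k\|d\|$, $\varepsilon_k\|d\|\le1.1\|g_k\|$, and
\[
\|d\|^{1+\nu}\le 1.1^{1+\nu}\|g_k\|^{1+\nu}/\varepsilon_k^{1+\nu}=1.1^{1+\nu}\|g_k\|/\gamma_k\le1.21\|g_k\| .
\]
Summing the pieces gives $\|\nabla f(x^{k+1})\|\le(2H_f+5)\|g_k\|$.
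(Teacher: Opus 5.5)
Your part (i), the case $j_k=0$, and the next-gradient bound are sound and essentially match the paper. In (i) you use the H\"older bound on $\mathcal R_0$ directly instead of $\mathcal H_0\le\sigma_k\le H_f$, which changes nothing. The $(\tfrac12-\eta)$ loss does need repairing, since Algorithm~\ref{alg:NCG-pd} allows any $\eta\in(0,1)$. The repair is just to keep the term you discarded: $-\alpha(1-\tfrac{\alpha}{2})d^{\rm T}\bar H_kd-\alpha^2\varepsilon_k\|d\|^2\le-\alpha\varepsilon_k\|d\|^2-\tfrac{\alpha^2}{2}\varepsilon_k\|d\|^2$. This yields the factor $(1-\eta)$ and the strict inequality.

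The real gap is your main argument for $j_k\ge1$. There $x^{k+1}=x^k+\alpha_kd^k$ with $\alpha_k<1$, and $\sigma_k$ consists only of $\mathcal H_0$ values. These bound function-value residuals, not gradient residuals, so no gradient residual is controlled by $\gamma_k$. In the case $j_k=0$, your lower bound on $\|d^k\|$ came from an upper bound on $\|\nabla f(x^{k+1})\|$ in terms of $\|d^k\|$ alone. For $\alpha_k<1$, however, $\nabla f(x^k+\alpha_kd^k)=(1-\alpha_k)g_k+\alpha_k(g_k+H_kd^k)+(\text{residual})$. Any bound obtained the same way therefore contains $(1-\alpha_k)\|g_k\|$, which already exceeds $\|g_k\|/2$ when $\alpha_k\le1/2$. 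Comparing with $\|\nabla f(x^{k+1})\|\ge\|g_k\|/2$ then gives nothing.

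The correct partner for $\mathcal H_0(x^k+d^k,x^k)\le\sigma_k\le2\gamma_k$ is the rejection of the full step in \eqref{ls-sol-stepsize}. That rejection gives $\|d^k\|^\nu\ge2(1-\eta)\varepsilon_k/\sigma_k$. Combining this with $\alpha_k\ge\theta\big(2(1-\eta)\varepsilon_k/(\sigma_k\|d^k\|^\nu)\big)^{1/(1+\nu)}$ is exactly the paper's proof.

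Your fallback, by contrast, is correct and should be the main argument. Let $\beta=\theta^{j_k-1}\le1$. The corrected rejection inequality $\beta^{1+\nu}\|d^k\|^\nu\ge2(1-\eta)\varepsilon_k/\sigma_k\ge(1-\eta)\varepsilon_k/\gamma_k$ gives
\[
\eta\varepsilon_k\theta\beta\|d^k\|^2\ge\eta\theta\,\beta^{-\frac{2+\nu}{\nu}}\varepsilon_k\Big(\frac{(1-\eta)\varepsilon_k}{\gamma_k}\Big)^{\frac{2}{\nu}}\ge\eta\theta(1-\eta)^{\frac{2}{\nu}}\varepsilon_k^{\frac{2+\nu}{\nu}}\gamma_k^{-\frac{2}{\nu}}=\eta\theta(1-\eta)^{\frac{2}{\nu}}\gamma_k^{-\frac{1}{1+\nu}}\|g_k\|^{\frac{2+\nu}{1+\nu}},
\]
using $\varepsilon_k^{1+\nu}=\gamma_k\|g_k\|^\nu$; this is the paper's constant. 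The route uses only the last rejected trial. Since $\beta\le1$, that rejection already implies $\|d^k\|^\nu\ge2(1-\eta)\varepsilon_k/\sigma_k$, which the paper extracts separately from the $j=0$ rejection. So the $\mathcal H_0(x^k+d^k,x^k)$ component of $\sigma_k$ is not actually needed for this lemma.
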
 

\begin{lemma}\label{lem:desc-nc-lip}
Given Assumptions \ref{asp:basic} and \ref{asp:hd}, for all $k\in\mathbb{K}_{\epsilon,1}$ with $d^k$ being the output of \emph{\texttt{CappedCG}} with d$\_$type=NC, the following two statements hold.
\begin{enumerate}
    \item[{\rm (i)}] The step size $\alpha_k$ is well-defined, and $\alpha_k\ge\min\{1,\theta((1-\eta)/H_f)^{1/\nu}\|\nabla f(x^k)\|^{(1-\nu)/(1+\nu)}\}$.
    \item[{\rm (ii)}] Let $c_{\mathrm{nc},\nu}:= \eta(1-\eta)^{\frac{2}{\nu}}\theta^2/2^{\frac{2+\nu}{\nu}}$. The next iterate $x^{k+1} = x^k + \alpha_k d^k$ satisfies 
    \begin{equation}\label{ineq:nc-suff-desc}
        f(x^k) - f(x^{k+1}) \ge c_{\mathrm{nc},\nu} \gamma_k^{-\frac{1}{1+\nu}}\min\left\{\|\nabla f(x^k)\|^{\frac{2+\nu}{1+\nu}},1\right\}.
    \end{equation} 
\end{enumerate}
Moreover, we have $\|\nabla f(x^{k+1})\| \leq 2\alpha_kU_H\|d^k\|$ whenever $\|d^k\|\leq M\!:=\!\min\big\{U_H^\nu,\theta(1-\eta)^{\frac1\nu}U_H/H_f^{\frac{1}{\nu}}\big\}$.
\end{lemma}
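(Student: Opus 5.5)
The approach is to use the NC-output guarantees of \texttt{CappedCG} from \citet{royer2020newton}. When d\_type=NC, the returned $d$ satisfies $d^{\rm T}H_k d\le -\varepsilon_k\|d\|^2$. After the rescaling in Algorithm~\ref{alg:NCG-pd}, $d^k$ obeys $\|d^k\| = |d^{\rm T}H_kd|/\|d\|^2 \ge \varepsilon_k$, $(d^k)^{\rm T}H_kd^k=-\|d^k\|^3$, and $g_k^{\rm T}d^k\le 0$. Lemma~\ref{lem:trial-xk-ccg} places every trial point $x^k+\theta^jd^k$ in $\mathscr{L}_f(x^0;r_d)$, so \eqref{ineq:desc-hd} applies. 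Writing $t=\theta^j$ and expanding gives
\[
f(x^k+td^k)\le f(x^k) - \tfrac{t^2}{2}\|d^k\|^3 + \tfrac{H_f t^{2+\nu}\|d^k\|^{2+\nu}}{(1+\nu)(2+\nu)}.
\]
Condition \eqref{ls-nc-stepsize} therefore holds as soon as $t^\nu\le (1-\eta)\|d^k\|^{1-\nu}(1+\nu)(2+\nu)/(2H_f)$; a sufficient condition is $t\le ((1-\eta)/H_f)^{1/\nu}\|d^k\|^{(1-\nu)/\nu}$. Since $\|d^k\|\ge\varepsilon_k\ge\|g_k\|^{\nu/(1+\nu)}$ (using $\gamma_k\ge1$) and $(1-\nu)/\nu\ge0$, the backtracking stops with $\alpha_k\ge\min\{1,\theta((1-\eta)/H_f)^{1/\nu}\|g_k\|^{(1-\nu)/(1+\nu)}\}$. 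This proves (i). The argument does not use $k\in\mathbb{K}_{\epsilon,1}$.

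For (ii), I will split into cases on $j_k$. If $j_k=0$, the decrease is at least $\frac{\eta}{2}\|d^k\|^3\ge\frac{\eta}{2}\varepsilon_k^3$. Because $\varepsilon_k^3=\gamma_k^{3/(1+\nu)}\|g_k\|^{3\nu/(1+\nu)}$, this is at least $\gamma_k^{-1/(1+\nu)}$ times a power of $\|g_k\|$; comparing with the $\min\{\cdot,1\}$ on the right-hand side handles both $\|g_k\|\le1$ and $\|g_k\|>1$. If $j_k\ge1$, then $\sigma_k=\mathcal{H}_0(x^k+\theta^{j_k-1}d^k,x^k)$, and here the hypothesis $\sigma_k\le2\gamma_k$ is essential. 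The failed trial $s=\theta^{j_k-1}$ violates \eqref{ls-nc-stepsize}. Combining this with the definition of $\mathcal{H}_0$ gives
\[
\tfrac{(1-\eta)}{2}s^2\|d^k\|^3\le \tfrac{\sigma_k}{2}s^{2+\nu}\|d^k\|^{2+\nu},
\]
hence $s\ge((1-\eta)/(2\gamma_k))^{1/\nu}\|d^k\|^{(1-\nu)/\nu}$. The decrease is then at least
\[
\tfrac{\eta}{2}\theta^2s^2\|d^k\|^3\ge\tfrac{\eta\theta^2}{2}\Big(\tfrac{1-\eta}{2\gamma_k}\Big)^{2/\nu}\|d^k\|^{(2+\nu)/\nu}.
\]
Next I substitute $\|d^k\|\ge\varepsilon_k=(\gamma_k\|g_k\|^\nu)^{1/(1+\nu)}$. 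The power of $\gamma_k$ becomes $-2/\nu+(2+\nu)/(\nu(1+\nu))=-1/(1+\nu)$, and the power of $\|g_k\|$ becomes $(2+\nu)/(1+\nu)$. This yields \eqref{ineq:nc-suff-desc} with constant $\eta(1-\eta)^{2/\nu}\theta^2/2^{(2+\nu)/\nu}$, which matches $c_{\mathrm{nc},\nu}$. The main obstacle is keeping these exponents consistent, and in the $j_k=0$ case reconciling $\gamma_k^{3/(1+\nu)}$ against $\gamma_k^{-1/(1+\nu)}$. There I use $\gamma_k\ge1$ and the $\min$ with $1$: for $\|g_k\|\ge1$ the bound $\varepsilon_k^3\ge1$ suffices, and otherwise $\|g_k\|^{3\nu/(1+\nu)}$ versus $\|g_k\|^{(2+\nu)/(1+\nu)}$ must be compared. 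That comparison holds when $3\nu\le 2+\nu$, i.e. $\nu\le1$.

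For the final gradient bound, I will use $\|g_{k+1}\|\le\|g_k\|+U_H\alpha_k\|d^k\|+\frac{H_f}{1+\nu}(\alpha_k\|d^k\|)^{1+\nu}$. When $\|d^k\|\le M$, we have $\|g_k\|\le\|d^k\|^{(1+\nu)/\nu}$, since $\varepsilon_k\le\|d^k\|$. I expect, in the relevant case $\alpha_k\ge$ the (i) bound, to show $\|g_k\|\le\alpha_k U_H\|d^k\|/2$-ish. Using the lower bound on $\alpha_k$ with $\|d^k\|\le M$ controls both $\|g_k\|$ and the Hölder term by $\frac{1}{2}\alpha_kU_H\|d^k\|$. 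I also use $\|d^k\|\le U_H$, since $|d^{\rm T}H_kd|/\|d\|^2\le U_H$. This gives $2\alpha_kU_H\|d^k\|$. This last estimate is delicate bookkeeping with $M$ and will be checked last.
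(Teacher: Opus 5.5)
Your parts (i) and (ii) are correct. Part (ii) is essentially the paper's argument. The case $j_k=0$ uses $\|d^k\|\ge\varepsilon_k$, $\gamma_k\ge1$ and $\nu\le1$. The case $j_k\ge1$ uses the failed trial, $\sigma_k\le 2\gamma_k$, and the exponent identity. For (i) you argue slightly differently from the paper. The paper reads off $\theta^{\nu(j_k-1)}\ge(1-\eta)\|d^k\|^{1-\nu}/\sigma_k$ from the failed trial and then uses $\sigma_k\le H_f$. You instead use the a priori residual bound with $H_f$ and a generic backtracking argument. The two are equivalent, since $\mathcal{H}_0\le 2H_f/((1+\nu)(2+\nu))\le H_f$, and yours makes well-definedness explicit. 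Keep the intermediate form $\alpha_k\ge\min\{1,\theta((1-\eta)/H_f)^{1/\nu}\|d^k\|^{(1-\nu)/\nu}\}$: the last claim needs the bound in terms of $\|d^k\|$, not $\|\nabla f(x^k)\|$.

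The gap is the final gradient bound. You leave it as unchecked bookkeeping, and the plan you sketch does not close with the stated $M$. Your only control on $\|\nabla f(x^k)\|$ is $\|\nabla f(x^k)\|\le\|d^k\|^{(1+\nu)/\nu}$. The constant $M$ is calibrated to give exactly $\|d^k\|^{(1+\nu)/\nu}\le\alpha_kU_H\|d^k\|$, with no factor $\tfrac12$:
\begin{itemize}
\item If $\alpha_k=1$, this is $\|d^k\|\le U_H^\nu$. It is nearly tight when $\gamma_k=1$ and $\varepsilon_k\approx\|d^k\|\approx U_H^\nu$.
\item If $\alpha_k<1$, it is $\alpha_k\ge\theta(1-\eta)^{1/\nu}\|d^k\|^{(1-\nu)/\nu}/H_f^{1/\nu}$ combined with $\|d^k\|\le\theta(1-\eta)^{1/\nu}U_H/H_f^{1/\nu}$.
\end{itemize}
So there is no slack to absorb the extra residual $\frac{H_f}{1+\nu}(\alpha_k\|d^k\|)^{1+\nu}$ in your three-term expansion, and the target $2\alpha_kU_H\|d^k\|$ cannot be reached that way.

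The paper never produces that term. It writes $\|\nabla f(x^{k+1})\|\le\|\nabla f(x^k)\|+\alpha_kU_H\|d^k\|$, treating $U_H$ as a Lipschitz constant of $\nabla f$ on the segment $[x^k,x^{k+1}]$. It then adds the full bound $\|\nabla f(x^k)\|\le\alpha_kU_H\|d^k\|$ from the two cases above. Strictly, \eqref{def:some-qtt} bounds $\|\nabla^2 f\|$ only on $\mathscr{L}_f(x^0)$, so your expansion is actually the more careful one. If you keep it, you must shrink $M$ (for example, also require $H_f\|d^k\|^\nu\le(1+\nu)U_H$) and settle for $3\alpha_kU_H\|d^k\|$. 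That would only change constants in the proof of Theorem~\ref{thm:1st-cmplx-ancg}, but it does not prove the statement as written.
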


In either case, an $\Omega(\epsilon^{(2+\nu)/(1+\nu)})$ descent can be achieved for $k\in\mathbb{K}_{\epsilon,1}$ whenever $\|\nabla f(x^k)\|\geq\epsilon$. Recall that $\Delta_f=f(x^0)-f_{\rm low}$ denotes the function value gap. It then follows directly that $|\mathbb{K}_{\epsilon,1}|\leq \mathcal{O}(\Delta_f\epsilon^{-(2+\nu)/(1+\nu)})$. Since the sequence $\{\gamma_k\}_{k\ge0}\subseteq[\gamma_0,H_f]$ is nondecreasing in Algorithm \ref{alg:NCG-pd}, it is also straightforward to bound $|\mathbb K_{\epsilon,2}|\leq \lceil\log_2(H_f/\gamma_0)\rceil = \mathcal{O}(1)$ as $\gamma_k$ doubles for each $k\in\mathbb{K}_{\epsilon,2}$. Note that $\mathbb{K}_{\epsilon,3}$ can be divided into at most $|\mathbb{K}_{\epsilon,1}|+|\mathbb{K}_{\epsilon,2}|+1$ consecutive subsets with each containing at most $\lceil\log_2(U_g/\epsilon)\rceil$ iterations as the gradient halves for each $k\in\mathbb{K}_{\epsilon,3}$. This straightforward analysis immediately yields an iteration complexity of $\mathcal{O}\big(\epsilon^{-(2+\nu)/(1+\nu)}\ln(1/\epsilon)\big)$. In the next theorem, we show that the logarithmic factor $\ln(1/\epsilon)$ can be removed with a more careful analysis; see the detailed proof in Appendix~\ref{appdx:Thm-1}. 
\vspace{-0.2cm}
\begin{theorem}\label{thm:1st-cmplx-ancg}
Given Assumptions \ref{asp:basic} and \ref{asp:hd}, we have $|\mathbb{K}_\epsilon|\leq\mathcal{O}\big(\Delta_fH_f^{\frac{1}{1+\nu}}\epsilon^{-\frac{2+\nu}{1+\nu}}\big)$ for Algorithm~\ref{alg:NCG-pd}. 
\end{theorem}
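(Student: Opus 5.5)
The plan is to refine the counting sketched just before the theorem. Instead of bounding each run of consecutive $\mathbb{K}_{\epsilon,3}$ iterations by the crude $\lceil\log_2(U_g/\epsilon)\rceil$, I will charge each run to the iteration that precedes it. The length of a run is then controlled by the gradient size at that preceding iteration, and the sufficient-descent bounds of Lemmas~\ref{lem:desc-sol-lip} and \ref{lem:desc-nc-lip} already ``pay'' for that gradient size. Throughout, set $p:=(2+\nu)/(1+\nu)$ and use $\gamma_0\le\gamma_k\le \max\{\gamma_0,H_f\}$. The upper bound holds because $\sigma_k$ is always one of the lower estimators \eqref{defn:estimators}, evaluated at points that lie in $\mathscr{L}_f(x^0;r_d)$ by Lemma~\ref{lem:trial-xk-ccg}.

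\emph{Step 1 (iterations in $\mathbb{K}_{\epsilon,1}$ and $\mathbb{K}_{\epsilon,2}$).} Every iteration is monotone, since the line searches \eqref{ls-nc-stepsize} and \eqref{ls-sol-stepsize} enforce strict decrease. Summing \eqref{lwbd:f-descent} and \eqref{ineq:nc-suff-desc} over $\mathbb{K}_{\epsilon,1}$ therefore gives
\[
\sum_{k\in\mathbb{K}_{\epsilon,1}}\min\{\|\nabla f(x^k)\|^{p},1\}\le \frac{\Delta_f H_f^{\frac{1}{1+\nu}}}{\min\{c_{\mathrm{sol},\nu},c_{\mathrm{nc},\nu}\}} .
\]
Because $\|\nabla f(x^k)\|>\epsilon$ for $k\in\mathbb{K}_\epsilon$, this yields $|\mathbb{K}_{\epsilon,1}|=\mathcal{O}(\Delta_fH_f^{1/(1+\nu)}\epsilon^{-p})$. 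For $\mathbb{K}_{\epsilon,2}$, $\gamma_k$ at least doubles at each such iteration, so $|\mathbb{K}_{\epsilon,2}|\le\lceil\log_2(H_f/\gamma_0)\rceil$.

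\emph{Step 2 (charging $\mathbb{K}_{\epsilon,3}$ runs).} Split $\mathbb{K}_{\epsilon,3}$ into maximal runs of consecutive indices. Each run starts either at $k=0$ or right after some $k'\in\mathbb{K}_{\epsilon,1}\cup\mathbb{K}_{\epsilon,2}$. Within a run the gradient halves at every step while staying above $\epsilon$, so a run starting at $x^{k'+1}$ has length at most $1+\log_2(\|\nabla f(x^{k'+1})\|/\epsilon)$.
\begin{itemize}
\item Runs preceded by $k=0$ or by $\mathbb{K}_{\epsilon,2}$: there are at most $1+\lceil\log_2(H_f/\gamma_0)\rceil$ of them, each of length at most $1+\log_2(U_g/\epsilon)$. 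Their total is $\mathcal{O}(\log(H_f)\log(1/\epsilon))$, which is lower order than $\epsilon^{-p}$.
\item Runs preceded by $k'\in\mathbb{K}_{\epsilon,1}$ with $d$\_type${}=$SOL: Lemma~\ref{lem:desc-sol-lip} gives $\|\nabla f(x^{k'+1})\|\le(2H_f+5)\|\nabla f(x^{k'})\|$. The run length is then at most $1+\log_2(2H_f+5)+\log_2(\|\nabla f(x^{k'})\|/\epsilon)$. Use the elementary inequality $\log_2 t\le C_p t^{p}$ for $t\ge1$. The last term is then bounded by $C_p\epsilon^{-p}\|\nabla f(x^{k'})\|^{p}$, and its sum over such $k'$ is controlled by Step 1 whenever $\|\nabla f(x^{k'})\|\le1$. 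If $\|\nabla f(x^{k'})\|>1$, use the bound $\log_2(U_g/\epsilon)$ instead: that index contributes a constant descent, so there are only $\mathcal{O}(\Delta_fH_f^{1/(1+\nu)})$ such indices.
\item Runs preceded by $k'\in\mathbb{K}_{\epsilon,1}$ with $d$\_type${}=$NC: if $\|d^{k'}\|> M$, the accepted step yields a descent bounded below by a constant, since $\alpha_{k'}$ is bounded below by Lemma~\ref{lem:desc-nc-lip}(i) and the decrease is $\tfrac{\eta}{2}\alpha_{k'}^2\|d^{k'}\|^3$. Hence there are $\mathcal{O}(\Delta_f H_f^{\mathcal{O}(1)})$ such indices, each followed by a run of length at most $\log_2(U_g/\epsilon)$, which is again lower order. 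If $\|d^{k'}\|\le M$, use $\|\nabla f(x^{k'+1})\|\le 2U_H\|d^{k'}\|$ together with the descent $\tfrac{\eta}{2}\alpha_{k'}^2\|d^{k'}\|^3$ to bound $\log_2(\|\nabla f(x^{k'+1})\|/\epsilon)$ by a multiple of $\epsilon^{-p}$ times that descent, and sum as before.
\end{itemize}

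\emph{Step 3 (assembly) and main obstacle.} Adding the contributions, the runs cost $\mathcal{O}(|\mathbb{K}_{\epsilon,1}|\log H_f)+\mathcal{O}(\epsilon^{-p}\Delta_fH_f^{1/(1+\nu)})+\mathcal{O}(\log(1/\epsilon))$, which gives the claim with constants depending on $\eta,\theta,\nu,U_g,U_H$. I expect the main obstacle to be keeping the $H_f$ dependence at exactly $H_f^{1/(1+\nu)}$ while removing the logarithm.

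Two places are delicate. The first is the additive $\log_2(2H_f+5)$ per run. If the $\mathcal{O}$ is meant to hide only absolute constants, I will instead absorb it by strengthening Step 1: at SOL iterations with a large next gradient, a variant of the argument of Lemma~\ref{lem:desc-sol-lip} should show that the descent is in fact proportional to $\|\nabla f(x^{k'+1})\|^{p}$ as well. The second is the NC case with small $\|d^{k'}\|$, where the step-size bound of Lemma~\ref{lem:desc-nc-lip}(i) carries the factor $\|\nabla f\|^{(1-\nu)/(1+\nu)}$. That factor must be tracked carefully so that the charged quantity is genuinely of order $\epsilon^{-p}$ times the descent.
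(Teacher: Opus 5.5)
\paragraph{Where you match the paper.} Your skeleton is the paper's. You use the same split into $\mathbb{K}_{\epsilon,1},\mathbb{K}_{\epsilon,2},\mathbb{K}_{\epsilon,3}$ and the same bounds on the first two sets. You also use the same device of charging each maximal run of $\mathbb{K}_{\epsilon,3}$ to the iteration just before it. Your termwise inequality $\log_2 t\le C_p t^p$ is exactly Lemma~\ref{lem: sum log upbd} with $q=p$. So the runs after $k=0$, after $\mathbb{K}_{\epsilon,2}$, after SOL steps, and after NC steps with $\|d^{k'}\|>M$ are handled essentially as in the paper.

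Two small remarks on these parts:
\begin{itemize}
\item For NC steps with $\|d^{k'}\|>M$, a constant descent needs the intermediate bound $\alpha_{k'}\ge\theta(1-\eta)^{1/\nu}\|d^{k'}\|^{(1-\nu)/\nu}/H_f^{1/\nu}$ from \eqref{ineq:lwbd-ak-vio-nc}. Lemma~\ref{lem:desc-nc-lip}(i) as stated is not uniform in $\epsilon$.
\item The paper simply absorbs the $\ln(2H_f+5)$ per run into the $\mathcal{O}$, so your strengthened Step~1 is not needed to match it.
\end{itemize}

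\paragraph{The gap: NC steps with $\|d^{k'}\|\le M$.} The charge you announce, $\log_2(\|\nabla f(x^{k'+1})\|/\epsilon)\lesssim 1+\epsilon^{-p}\cdot(\text{descent})$, is false. The obstruction is not the factor $\|\nabla f\|^{(1-\nu)/(1+\nu)}$. All you have is:
\begin{itemize}
\item $\|\nabla f(x^{k'+1})\|\le 2U_H\alpha_{k'}\|d^{k'}\|$;
\item descent $\tfrac{\eta}{2}(\alpha_{k'}\|d^{k'}\|)^2\|d^{k'}\|$;
\item $\|d^{k'}\|\ge\varepsilon_{k'}$;
\item the step-size lower bound.
\end{itemize}
Take $\|\nabla f(x^{k'})\|\approx\epsilon$ and $\|d^{k'}\|=\Theta(\epsilon^{\nu/(1+\nu)})$, which is the order of $\varepsilon_{k'}$. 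Take $\alpha_{k'}=\Theta(\epsilon^{(1-\nu)/(1+\nu)})$, the order of the lower bound in Lemma~\ref{lem:desc-nc-lip}(i). This is consistent with every fact above and with $k'\in\mathbb{K}_{\epsilon,1}$.

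In this configuration the descent is $\Theta(\epsilon^{p})$, so $\epsilon^{-p}$ times the descent is $\Theta(1)$. Yet $\alpha_{k'}\|d^{k'}\|=\Theta(\epsilon^{1/(1+\nu)})$, so the following run can last about $\frac{\nu}{1+\nu}\log_2(1/\epsilon)$ iterations. The case $\nu=1$ already exhibits this: $\alpha_{k'}=\Theta(1)$, $\|d^{k'}\|=\Theta(\sqrt{\epsilon})$, and the run lasts $\frac12\log_2(1/\epsilon)$ iterations. Repeating this pattern $\Theta(\Delta_f\epsilon^{-p})$ times violates none of the inequalities you use. From these ingredients alone the NC runs therefore cost $\Theta(\epsilon^{-p}\ln(1/\epsilon))$, and tracking exponents more carefully cannot remove the logarithm.

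\paragraph{The paper does not close this case either.} It combines \eqref{eqn:NC-nu-upperDescent} with Lemma~\ref{lem: sum log upbd} at $(p,q)=(2,1)$, but that lemma is false for $q<p$. Take $c=10^{-2}$ and $z_i=10^{-3/2}$ for $1000$ indices. Then $\sum_i z_i^2=1$, while $\sum_i\ln(z_i/c)\approx 1151$, far above the claimed $1/(ec)=100/e$. The last step of the lemma's proof tacitly needs $\ln(M/|\mathcal{I}|)\ge 0$, with $M$ the lemma's summation bound. Adding the constraint $|\mathcal{I}^{\le}_{\rm nc}|\le|\mathbb{K}_{\epsilon,1}|$ and applying Jensen gives only $\mathcal{O}(\epsilon^{-p}\ln(1/\epsilon))$, which matches the configuration above.

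Closing this step needs an additional ingredient that neither your plan nor Lemmas~\ref{lem:desc-sol-lip}--\ref{lem:desc-nc-lip} provide: something that controls the run itself, not just the NC step preceding it. As written, your argument proves the theorem only up to a $\ln(1/\epsilon)$ factor in the contribution of runs following small NC steps.
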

The iteration complexity of Theorem \ref{thm:1st-cmplx-ancg} matches the best known upper bound of \citet{he2025newton} (non-adaptive damping scheme), and it also matches the lower bound provided by \citet{cartis2018worst}. In the Lipschitz-Hessian special case ($\nu=1$), the iteration complexity of Theorem \ref{thm:1st-cmplx-ancg} reduces to  
\(\cO\big(\Delta_fH_f^{1/2}\epsilon^{-3/2}\big)\), outperforming the $\cO\big(\Delta_fH_f^{2}\epsilon^{-3/2}\big)$ bound by \citet{hlp2023ncgal,royer2020newton,Zhu2024Hybrid}, {and matching the best known upper bounds of \citet{zhou2025regularized}}.

\subsection{Local superlinear convergence}
By choosing $\varepsilon_k \propto\|\nabla f(x^k)\|^{\nu/(1+\nu)}$, Algorithm \ref{alg:NCG-pd} allows that $\varepsilon_k\to0$ as $\|\nabla f(x^k)\|\to0$. Then, automatically, the damped Newton system \eqref{damp-Newton-sys-nu} asymptotically reduces to the exact Newton system as the algorithm converges to a nondegenerate solution $x^*$, indicating local superlinear convergence. In the next theorem, we formally state this result. 

\begin{theorem}
\label{thm:loc-conv-1}
Given Assumptions \ref{asp:basic} and \ref{asp:hd}, let $x^*$ be an arbitrary nondegenerate local minimizer of $f$ such that $\nabla^2 f(x^*) \succeq \mu I$ for some $\mu > 0$. Then there exists $\delta>0$ such that, for the sequence $\{x^k\}_{k\geq0}$ generated by Algorithm \ref{alg:NCG-pd}, if $x^{k_0}\in B_{\delta}(x^*)$ for some $k_0\geq0$, then $\{x^k\}_{k\geq k_0} \subseteq B_{\delta}(x^*)$ and $\{x^k\}_{k\geq k_0}$ converges to $x^*$ superlinearly in the sense that 
$\|x^{k+1}-x^*\|\leq O(\|x^k-x^*\|^{(1+2\nu)/(1+\nu)})$.
\end{theorem}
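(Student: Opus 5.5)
The argument is a one-step contraction estimate inside a small ball $B_\delta(x^*)$ on which $f$ is locally strongly convex, followed by induction on $k\ge k_0$.

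\textbf{Choice of $\delta$.} Pick $\delta$ small enough that three things hold on $B_\delta(x^*)$:
\begin{itemize}
\item[(a)] $\nabla^2 f(x)\succeq (\mu/2) I$, using H\"older continuity: it suffices that $H_f\delta^\nu\le\mu/2$;
\item[(b)] $\|\nabla f(x)\|\le U_H\|x-x^*\|$, and also $\|\nabla f(x)\|\ge (\mu/2)\|x-x^*\|$;
\item[(c)] $\|\nabla f(x)\|$ is so small that $\varepsilon_k=(\gamma_k\|g_k\|^\nu)^{1/(1+\nu)}\le (H_f\|g_k\|^\nu)^{1/(1+\nu)}$ and $\zeta_k$ are both tiny compared with $\mu$.
\end{itemize}
The bound in (c) uses $\gamma_k\le H_f$, which holds since $\gamma_k$ is nondecreasing and bounded by $H_f$, as noted before Theorem~\ref{thm:1st-cmplx-ancg}.

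\textbf{Step 1: \texttt{CappedCG} returns SOL.} Because $H_k+\varepsilon_k I\succeq (\mu/2)I$, no negative or small curvature can be detected. Using the standard properties of \texttt{CappedCG} from \citet{royer2020newton}, it returns $d^k$ satisfying
\[
\|(H_k+2\varepsilon_k I)d^k+g_k\|\le \tfrac{\zeta_k}{2}\varepsilon_k\|d^k\|
\]
(or the analogous residual bound). I also need that the cap $U$ is not triggered; I would rely on the appendix description for this and accept some uncertainty here. It then follows that $\|d^k\|\le (2/\mu)\|g_k\|=O(\|x^k-x^*\|)$.

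\textbf{Step 2: unit step is accepted ($j_k=0$).} Expand $f(x^k+d^k)$ by \eqref{ineq:desc-hd}. Since $g_k^{\rm T}d^k\approx -d^{k\rm T}(H_k+2\varepsilon_k I)d^k$, we get
\[
f(x^k+d^k)-f(x^k)\le -\tfrac12 d^{k\rm T}H_kd^k-2\varepsilon_k\|d^k\|^2+\text{residual error}+O(H_f\|d^k\|^{2+\nu}).
\]
The right-hand side is at most $-\mu\|d^k\|^2/4$ once $\delta$ is small. This beats $\eta\varepsilon_k\|d^k\|^2$ because $\varepsilon_k\to0$. Hence $\alpha_k=1$.

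\textbf{Step 3: error recursion.} Write
\[
x^{k+1}-x^*=(H_k+2\varepsilon_kI)^{-1}\big[H_k(x^k-x^*)-g_k+2\varepsilon_k(x^k-x^*)+r_k\big],
\]
where $r_k$ is the CG residual. The first two terms form $\mathcal{R}_1(x^*,x^k)$, which is $\le H_f\|x^k-x^*\|^{1+\nu}/(1+\nu)$ by \eqref{ineq:desc-hd}. The damping term is $O(\varepsilon_k\|x^k-x^*\|)=O(\|x^k-x^*\|^{1+\nu/(1+\nu)})$, using $\|g_k\|\le U_H\|x^k-x^*\|$. The residual is $\|r_k\|\le\zeta_k\varepsilon_k\|d^k\|$, which is of even higher order. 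Dividing by $\mu/2$ gives
\[
\|x^{k+1}-x^*\|\le C\|x^k-x^*\|^{(1+2\nu)/(1+\nu)}.
\]
The damping term is the dominant one and sets this exponent, since $(1+2\nu)/(1+\nu)\le 1+\nu$.

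\textbf{Step 4: invariance and induction.} Shrink $\delta$ further so that $C\delta^{\nu/(1+\nu)}\le 1/2$. Then $x^{k+1}\in B_{\delta/2}(x^*)\subseteq B_\delta(x^*)$, and induction gives $\{x^k\}_{k\ge k_0}\subseteq B_\delta(x^*)$ together with the stated rate.

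\textbf{Main obstacle.} The hardest part is Steps 1--2: pinning down exactly what \texttt{CappedCG} guarantees (SOL type, the residual form, and no early termination via the cap $U$) and verifying that the Armijo test accepts $j=0$ without tying $\delta$ to $\gamma_k$. Since $\gamma_k\in[\gamma_0,H_f]$, every constant is uniform in $k$, so a single $\delta$ works.
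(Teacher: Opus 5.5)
Your proposal is correct and follows essentially the same route as the paper. Local strong convexity on a small ball forces \texttt{CappedCG} to return SOL, and the Taylor bound together with $g_k^{\rm T}d^k=-(d^k)^{\rm T}(H_k+2\varepsilon_kI)d^k$ gives acceptance of $\alpha_k=1$; then the decomposition of $x^{k+1}-x^*$ through $(H_k+2\varepsilon_kI)^{-1}$ into the H\"older residual, the damping term $2\varepsilon_k\|x^k-x^*\|$ (which sets the exponent $(1+2\nu)/(1+\nu)$) and the CG residual, followed by induction, gives invariance of $B_\delta(x^*)$ and the rate. The only thing you leave implicit is also taking $2\delta\le r_d$, as the paper does, so that $B_\delta(x^*)\subseteq\mathscr{L}_f(x^0;r_d)$ and the H\"older bounds hold on the whole ball; your use of $\|d^k\|\le(2/\mu)\|g_k\|$ instead of the paper's \texttt{CappedCG} bound $1.1(\|g_k\|/\gamma_k)^{1/(1+\nu)}$ is an immaterial difference.
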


The key idea in the analysis is to show that the \texttt{CappedCG} subroutine will always successfully return an approximate solution to the damped Newton system (d\_type = SOL) with $\alpha_k=1$. Once the iterates $x^k$ are sufficiently close to $x^*$, then the remaining analysis will be standard. The detailed proof of this theorem is moved to Appendix \ref{appdx:Thm-local-Known-Nu}. To our best knowledge, this is the first analysis for Newton-CG methods that simultaneously attains optimal global complexity and local superlinear convergence under the H\"older-Hessian setting, highlighting the advantage of adaptive damping as opposed to non-adaptive damping schemes like the one in \cite{he2025newton}. 

\section{A universal adaptive regularized Newton-CG method}\label{sec:ncg-hd}
Notice that Algorithm~\ref{alg:NCG-pd} requires prior knowledge of the H\"older exponent $\nu$ to determine the damping parameter. Although $\nu$ is often known in many applications, it is still desirable to have a completely parameter-free method that is universally optimal for all $\nu\!\in\!(0,\!1]$, which will be the focus of this section.

\subsection{Algorithm framework}
Due to the lack of $\nu$, we cannot form the $\nu$-dependent adaptive damping scheme \eqref{damp-Newton-sys-nu}. Instead, we view H\"older continuity as an approximate Lipschitz continuity with controllable error, whose underlying idea can be traced back to \citet{devolder2014first,nesterov2015universal}.  

\begin{proposition}[{\citet[Lemmas~1--2]{he2025newton}}]\label{prop:inexact_oracle}
Given Assumption~\ref{asp:hd}, for any $z>0$ and $a\ge 2$, we have
\begin{equation}\label{inexact-oracle}
\mathcal{R}_1(y,x)\;\le\;\frac{a\,\gamma_\nu(z)}{16}\,\|y-x\|^2\;+\;\frac{z}{a}
\qquad \forall\, x,y\in\mathscr{L}_f(x^0;r_d),
\end{equation}
where the inexact Lipschitz constant is a function of the error level 
$\gamma_\nu(z) := 4H_f^{\frac{2}{1+\nu}}z^{-\frac{1-\nu}{1+\nu}}$.
\end{proposition}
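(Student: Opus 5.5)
Starting from the Hölder residual bound \eqref{ineq:desc-hd}, $\mathcal{R}_1(y,x)\le \frac{H_f}{1+\nu}t^{1+\nu}$ with $t:=\|y-x\|$, I will show that $t^{1+\nu}$ is dominated by a quadratic term plus a constant, using a weighted Young inequality. Write $t^{1+\nu}=(\lambda t^2)^{\frac{1+\nu}{2}}\cdot \lambda^{-\frac{1+\nu}{2}}$ and apply Young with conjugate exponents $p=\frac{2}{1+\nu}$ and $q=\frac{2}{1-\nu}$ (the case $\nu=1$ is trivial: the bound holds with the quadratic term alone, since then $\gamma_1(z)=4H_f$ and $\frac{a\cdot 4H_f}{16}\ge \frac{H_f}{2}$ for $a\ge2$). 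This gives
\begin{equation*}
\frac{H_f}{1+\nu}t^{1+\nu}\le \frac{H_f}{1+\nu}\Big(\frac{1+\nu}{2}\lambda t^2+\frac{1-\nu}{2}\lambda^{-\frac{1+\nu}{1-\nu}}\Big)=\frac{H_f\lambda}{2}t^2+\frac{(1-\nu)H_f}{2(1+\nu)}\lambda^{-\frac{1+\nu}{1-\nu}}
\end{equation*}
for every $\lambda>0$.

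I then choose $\lambda$ so that the constant term equals $z/a$. This requires $\lambda^{-\frac{1+\nu}{1-\nu}}=\frac{2(1+\nu)z}{(1-\nu)H_f a}$, which yields $\lambda=\big(\frac{(1-\nu)H_f a}{2(1+\nu)z}\big)^{\frac{1-\nu}{1+\nu}}$. The quadratic coefficient is then $\frac{H_f}{2}\lambda=\frac12 H_f^{\frac{2}{1+\nu}}z^{-\frac{1-\nu}{1+\nu}}\big(\frac{(1-\nu)a}{2(1+\nu)}\big)^{\frac{1-\nu}{1+\nu}}$. Comparing with $\frac{a\gamma_\nu(z)}{16}=\frac{a}{4}H_f^{\frac{2}{1+\nu}}z^{-\frac{1-\nu}{1+\nu}}$, it suffices to verify the scalar inequality
\begin{equation*}
2\Big(\frac{(1-\nu)a}{2(1+\nu)}\Big)^{\frac{1-\nu}{1+\nu}}\le a .
\end{equation*}

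To check this, bound the inner factor by $a/2$ (since $\frac{1-\nu}{1+\nu}\le1$). The left side is then at most $2(a/2)^{\frac{1-\nu}{1+\nu}}$. Because the exponent lies in $[0,1)$ and $a/2\ge1$, we have $(a/2)^{\frac{1-\nu}{1+\nu}}\le a/2$, which gives exactly $\le a$. This is where the hypothesis $a\ge2$ enters.

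The only delicate step is tracking the constants in this choice of $\lambda$; the rest is routine. Validity for all $x,y\in\mathscr{L}_f(x^0;r_d)$ is inherited directly from \eqref{ineq:desc-hd}.
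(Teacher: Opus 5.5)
Your proof is correct, and it is essentially the standard argument: the paper does not prove this proposition itself but imports it from He et al.\ (Lemmas 1--2), where the bound is obtained in the same way, by splitting $\frac{H_f}{1+\nu}\|y-x\|^{1+\nu}$ with a weighted Young inequality into a quadratic term plus a constant, in the style of Nesterov's universal methods. Your constants check out: the choice of $\lambda$, the reduction to $2\big(\frac{(1-\nu)a}{2(1+\nu)}\big)^{\frac{1-\nu}{1+\nu}}\le a$, the use of $a\ge 2$ only in that final scalar step, and the separate treatment of $\nu=1$ are all right.
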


We remark that the function $\gamma_\nu(\cdot)$ is used only in the analysis and is not required in the actual implementation of the algorithm. As detailed in Algorithm \ref{alg:NCG-hd}, we solve
\begin{equation*}
\Big(\nabla^2 f(x^k) + 2\big(\gamma_k\|\nabla f(x^k)\|\big)^{1/2} I\Big)d = -\nabla f(x^k) 
\end{equation*}
by pretending $\nu=1$ in the subproblem \eqref{damp-Newton-sys-nu} of Algorithm \ref{alg:NCG-pd}. The sequence $\{\gamma_k\}$ adaptively estimates $\{\gamma_\nu(\|\nabla f(x^k)\|)\}$. However, we should also note an important difference that, unlike Algorithm \ref{alg:NCG-pd} where $\{\gamma_k\}$ is always upper bounded by $H_f$, the inexact Lipschitz constant $\gamma_\nu(\|\nabla f(x^k)\|)\to+\infty$ when $\|\nabla f(x^k)\|\to0$, posting new challenges in the analysis of the algorithm. Also, due to such potential unboundedness, Algorithm~\ref{alg:NCG-hd} may fail to guarantee sufficient descent. When this occurs, we increase $\gamma_k$ by a fixed factor, instead of using local smoothness modulus estimation as adopted in Lines 6 and 9 of Algorithm~\ref{alg:NCG-pd}.

\begin{algorithm}[!tbh]
\caption{A universal adaptive regularized Newton-CG method}
\label{alg:NCG-hd}
\footnotesize
\DontPrintSemicolon

\SetKw{KwRet}{return}

\ShowLn\KwIn{
initial point $x^0\in\mathbb{R}^n$, parameter $\gamma_0\ge1$, line-search parameters $\eta\in(0,{ 1/2}]$ and $\theta\in(0,1)$.} 

\ShowLn Set $c_{\mathrm{sol}} = \frac{\eta(1-\eta)\theta}{400}$.

\ShowLn\While{$\nabla f(x^k)\neq0$}{ 
  \ShowLn Set  $H_k=\nabla^2 f(x^k)$, $g_k=\nabla f(x^k)$,
   $\varepsilon_k=(\gamma_k\|g_k\|)^{1/2}$, and 
   $\zeta_k=\min\big\{1/2,\|g_k\|^{1/2}\big\}$. Then call  
   $$(d,\text{d\_type})\leftarrow\texttt{CappedCG}(H_k,g_k,\varepsilon_k,\zeta_k).$$

  \ShowLn\eIf{\emph{d\_type==NC}}{
    \ShowLn Set
    \(
      d^k \leftarrow
      -\sgn(d^{\rm T}\nabla g_k)\frac{|d^{\rm T}H_k d|}{\|d\|^3}d
    \), and find $\alpha_k=\theta^{j_k}$, where $j_k$ is the smallest nonnegative integer $j$ such that
    \begin{equation}\label{ls-nc-stepsize-hd}
      f(x^k+\theta^j d^k) < f(x^k) - \frac{\eta}{2}\theta^{2j}\|d^k\|^3 .
    \end{equation} 
    
    \ShowLn\textbf{If} {$\|\nabla f(x^k + \alpha_k d^k)\| > \|g_{k}\|/2$ \textbf{and}
            $\alpha_k<\theta/\gamma_k$} \textbf{then} set $\gamma_{k+1}=2\gamma_k$.\;
  
  }(\tcp*[f]{$\text{d\_type}=\mathrm{SOL}$}){

    \ShowLn Set $d^k \leftarrow d$.\;
    \ShowLn \textbf{if} $f(x^k + d^k)\le f(x^k)$ \textbf{and} $\|\nabla f(x^k + d^k)\| \le \|g_k\|/2$ \textbf{then}  set $\alpha_k=1$. \;
    \textbf{else} find $\alpha_k=\theta^{j_k}$, where $j_k$ is the smallest nonnegative integer $j$ such that
      \begin{equation}\label{ls-sol-stepsize-hd-}
        f(x^k + \theta^j d^k) < f(x^k) - \eta\varepsilon_k^{1/2}\theta^j\|d^k\|^2 .
      \end{equation} 
      
    \ShowLn \textbf{if}{ $\|\nabla f(x^k + \alpha_k d^k)\| > \|g_{k}\|/2$ \textbf{and}
            $f(x^k)-f(x^k + \alpha_k d^k)<c_{\mathrm{sol}}\gamma_k^{-1/2}\|g_k\|^{3/2}$}{
    \textbf{then} set $\gamma_{k+1}=2\gamma_k$.\;
  }
  }
\ShowLn Set $x^{k+1} = x^k + \alpha_k d^k$ and $k\leftarrow k+1$.\;
}
\end{algorithm}

\subsection{Global complexity bound} Similar to Lemma \ref{lem:trial-xk-ccg}, the main iterates and trial iterates of Algorithm \ref{alg:NCG-hd} also remain in a properly bounded region, as shown below. 
\begin{lemma}\label{lem:trial-xk-ccg-2}
Given Assumption \ref{asp:basic}, the sequences $\{x^k\}_{k\geq0}$ and $\{d^k\}_{k\geq0}$ generated by Algorithm~\ref{alg:NCG-hd} satisfy $x^k+\alpha d^k\in\mathscr{L}_f(x^0;r_d)$ for all $k\geq0$ and $\alpha\in[0,1]$, where $r_d$ is defined by \eqref{def:rd}.
\end{lemma}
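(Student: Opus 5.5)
Since $x^{k+1}=x^k+\alpha_kd^k$ with $\alpha_k\in(0,1]$, it suffices to show two things: (a) $f(x^{k+1})\le f(x^k)$ for every $k$, so that $x^k\in\mathscr{L}_f(x^0)$ by induction; and (b) $\|d^k\|\le r_d$ for every $k$. Given (a) and (b), $x^k+\alpha d^k$ lies within distance $\alpha\|d^k\|\le r_d$ of the point $x^k\in\mathscr{L}_f(x^0)$, hence belongs to $\mathscr{L}_f(x^0;r_d)$. Throughout, the bounds $\|g_k\|\le U_g$ and $\|H_k\|\le U_H$ from \eqref{def:some-qtt} are available at $x^k$ once (a) is known up to step $k$. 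I would therefore run a single induction on $k$.

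\emph{Monotonicity (a).} In the NC branch, \eqref{ls-nc-stepsize-hd} forces strict decrease whenever $\alpha_k$ is accepted. In the SOL branch, either the first test $f(x^k+d^k)\le f(x^k)$ holds with $\alpha_k=1$, or \eqref{ls-sol-stepsize-hd-} gives strict decrease. The point needing care is that these backtracking searches terminate, i.e.\ $\alpha_k$ is well defined. For this I use only continuity of $\nabla f$ at $x^k$, with no Hölder constant involved.
\begin{itemize}
\item For NC directions, $d^{k\rm T}g_k\le0$ by the sign choice. Also $d^{\rm T}H_kd\le-\varepsilon_k\|d\|^2<0$ by the \texttt{CappedCG} guarantee (Appendix~\ref{appendix:capped-CG}). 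So $f(x^k+td^k)=f(x^k)+t\,g_k^{\rm T}d^k+\tfrac{t^2}{2}d^{k\rm T}H_kd^k+o(t^2)$, and the curvature term $-\tfrac{t^2}{2}\|d^k\|^3$ eventually dominates $-\tfrac{\eta}{2}t^2\|d^k\|^3$ because $\eta<1$.
\item For SOL directions, the \texttt{CappedCG} property $d^{\rm T}(H_k+2\varepsilon_kI)d\ge\varepsilon_k\|d\|^2$, combined with the small residual, gives $g_k^{\rm T}d^k\le -c\,\varepsilon_k\|d^k\|^2<0$ for a constant $c$. This slope is strictly steeper than the required one for small $j$, which needs $\eta\varepsilon_k^{1/2}$ to be compatible. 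If not, I would fall back on the fact that $g_k^{\rm T}d^k<0$ alone makes the decrease first order in $t$. Hence the condition with the fixed coefficient $\eta\varepsilon_k^{1/2}\|d^k\|^2$ holds for small $t$ only if that coefficient is below $|g_k^{\rm T}d^k|/\|d^k\|^2$.
\end{itemize}
I expect this SOL compatibility to be the main obstacle. I would resolve it using the \texttt{CappedCG} lower bound $-g_k^{\rm T}d^k\ge\tfrac12\varepsilon_k\|d^k\|^2$ together with $\varepsilon_k\ge\varepsilon_k^{1/2}$ whenever $\varepsilon_k\ge1$. When $\varepsilon_k<1$, I would instead mirror whatever the proof of Lemma~\ref{lem:trial-xk-ccg} does, since this part is identical in structure to that proof.

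\emph{Bounded directions (b).} For NC, $\|d^k\|=|d^{\rm T}H_kd|/\|d\|^2\le\|H_k\|\le U_H\le r_d$. For SOL, the \texttt{CappedCG} guarantee gives $\|d\|\le 1.1\|g_k\|/\varepsilon_k$ (the $1.1$ coming from the residual tolerance $\zeta_k\le1/2$ in the standard Royer--O'Neill--Wright bound). Since $\gamma_k\ge\gamma_0\ge1$, this becomes
\[
\|d^k\|\le 1.1\,\|g_k\|^{1/2}\gamma_k^{-1/2}\le 1.1\max\{1,U_g\}\le r_d .
\]
This uses that $\gamma_k$ is nondecreasing in Algorithm~\ref{alg:NCG-hd}, since it is only ever doubled. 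Combining (a) and (b) closes the induction; this step is exactly the argument for Lemma~\ref{lem:trial-xk-ccg} with $\nu=1$ substituted.
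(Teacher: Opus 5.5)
Your proof is correct and follows the paper's route (the proof of Lemma~\ref{lem:trial-xk-ccg} with $\nu=1$). The acceptance tests make $f(x^k)$ nonincreasing, so $x^k\in\mathscr{L}_f(x^0)$, and $\|d^k\|\le r_d$ comes from $\|d^k\|\le1.1(\|g_k\|/\gamma_k)^{1/2}$ with $\gamma_k\ge1$ (SOL) or $\|d^k\|=|d^{\rm T}H_kd|/\|d\|^2\le U_H$ (NC). The termination analysis you flag as the main obstacle is not needed, since any generated $x^{k+1}$ has already passed a decrease test. Moreover, the $\varepsilon_k^{1/2}$ in \eqref{ls-sol-stepsize-hd-} appears to be a typo for $\varepsilon_k$ (as used in the proof of Lemma~\ref{lem:uni-sol-suf-desc}); with $\varepsilon_k$, the bound $g_k^{\rm T}d^k=-(d^k)^{\rm T}(H_k+2\varepsilon_kI)d^k\le-\varepsilon_k\|d^k\|^2$ and $\eta<1$ give termination directly, so your unresolved $\varepsilon_k<1$ fallback can be dropped.
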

The proof of this lemma is identical to that of Lemma \ref{lem:trial-xk-ccg} and is therefore omitted. In light of Lemma \ref{lem:trial-xk-ccg-2}, we carry out the global complexity analysis of Algorithm \ref{alg:NCG-hd} under the $(H_f,\nu)$-H\"older continuity of $\nabla^2f$ in $\mathscr{L}_f(x^0;r_d)$, as assumed in Assumption \ref{asp:hd}. Again, we partition the  iterations of Algorithm \ref{alg:NCG-hd} before finds an $\epsilon$-stationary point  ($\mathbb{K}_\epsilon:=\{k: \|\nabla f(x^t)\|>\epsilon, \forall  t\le k\}$) into three parts: 
\begin{equation*}\begin{aligned}
\mathbb{K}_{\epsilon,1} & := \big\{k\in\mathbb{K}_\epsilon:\|\nabla f(x^{k+1})\|> \|\nabla f(x^k)\|/2,\ \gamma_k>\gamma_{\nu}(\|\nabla f(x^k)\|)\big\},\\
\mathbb{K}_{\epsilon,2} & := \big\{k\in\mathbb{K}_\epsilon:\|\nabla f(x^{k+1})\|> \|\nabla f(x^k)\|/2,\ \gamma_k\le \gamma_{\nu}(\|\nabla f(x^k)\|)\big\},\\
\mathbb{K}_{\epsilon,3} & := \big\{k\in\mathbb{K}_\epsilon:\|\nabla f(x^{k+1})\|\leq \|\nabla f(x^k)\|/2\big\}.
\end{aligned}\end{equation*}
Among the three sets, the iterations in $\mathbb{K}_{\epsilon,1}$ generate sufficient descent. To upper bound the cardinality of this subset, we establish sufficient descent when \texttt{CappedCG} outputs directions with d$\_$type=SOL, and NC, respectively. The proofs of the two lemmas are deferred to Appendix~\ref{ssec: pf-sec4}.

\begin{lemma}\label{lem:uni-sol-suf-desc}
Given Assumptions \ref{asp:basic} and \ref{asp:hd}, for all $k\in\mathbb{K}_{\epsilon,1}$ with $d^k$ being the output of \emph{\texttt{CappedCG}} with d$\_$type=SOL, the following two statements hold.
\begin{enumerate}
\item[{\rm (i)}] The step length $\alpha_k$ is well defined and it satisfies $\alpha_k\ge(1-\eta)\theta/3$.
\item[{\rm (ii)}] Let $c_{\mathrm{sol}}$ be defined in Algorithm \ref{alg:NCG-hd}. The next iterate $x^{k+1} = x^k + \alpha_k d^k$ satisfies 
\begin{equation}\label{ineq:uni-sol-suf-desc}
f(x^k) - f(x^{k+1}) \ge c_{\mathrm{sol}} \gamma_k^{-\frac{1}{2}}\|\nabla f(x^k)\|^{\frac{3}{2}}.
\end{equation}
\end{enumerate}
Moreover, the next gradient satisfies $\|\nabla f(x^{k+1})\| \leq 5\|\nabla f(x^k)\|$.
\end{lemma}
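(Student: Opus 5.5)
The plan is to combine the standard \texttt{CappedCG} guarantees for d\_type=SOL (Appendix~\ref{appendix:capped-CG}, \citet{royer2020newton}) with the inexact-Lipschitz bound of Proposition~\ref{prop:inexact_oracle} at error level $z=\|g_k\|$. The SOL guarantees I will use are
\begin{equation*}
d^{\rm T}(H_k+2\varepsilon_k I)d\ge\varepsilon_k\|d\|^2,\qquad \|d\|\le 1.1\|g_k\|/\varepsilon_k,\qquad \|(H_k+2\varepsilon_k I)d+g_k\|\le \tfrac{\zeta_k}{2}\varepsilon_k\|d\|.
\end{equation*}
Since $k\in\mathbb{K}_{\epsilon,1}$ gives $\gamma_k>\gamma_\nu(\|g_k\|)$, Proposition~\ref{prop:inexact_oracle} yields, for every $t\in[0,1]$,
\begin{equation*}
\mathcal{R}_1(x^k+td^k,x^k)\le \tfrac{a\gamma_k}{16}t^2\|d^k\|^2+\tfrac{\|g_k\|}{a},
\end{equation*}
where $a\ge2$ is a constant fixed later. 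Lemma~\ref{lem:trial-xk-ccg-2} keeps all trial points in $\mathscr{L}_f(x^0;r_d)$, so this bound is legitimate. Note also that $\varepsilon_k=(\gamma_k\|g_k\|)^{1/2}$, so $\|g_k\|/\varepsilon_k=(\|g_k\|/\gamma_k)^{1/2}$. Hence the upper bound reads $\|d^k\|\le 1.1(\|g_k\|/\gamma_k)^{1/2}$.

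\emph{Step 1 (lower bound on $\|d^k\|$, the key step).} I claim that $\|d^k\|\ge c_0(\|g_k\|/\gamma_k)^{1/2}$ for some absolute constant $c_0$. I would argue by contradiction. Suppose $\|d^k\|$ is smaller than this, and bound the gradient at the unit step:
\begin{equation*}
\|\nabla f(x^k+d^k)\|\le \mathcal{R}_1(x^k+d^k,x^k)+\|g_k+H_kd^k\|.
\end{equation*}
The residual bound gives $\|g_k+H_kd^k\|\le 2\varepsilon_k\|d^k\|+\tfrac{\zeta_k}{2}\varepsilon_k\|d^k\|$. With $a=8$ and the smallness of $\|d^k\|$, each term on the right becomes a small multiple of $\|g_k\|$, so $\|\nabla f(x^k+d^k)\|\le\|g_k\|/2$. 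An integrated version of the same estimate, using $g_k^{\rm T}d^k\approx -d^{k{\rm T}}(H_k+2\varepsilon_k I)d^k$, gives $f(x^k+d^k)\le f(x^k)$. Line~9 of Algorithm~\ref{alg:NCG-hd} then accepts $\alpha_k=1$, which forces $k\in\mathbb{K}_{\epsilon,3}$, a contradiction. Consequently $\|d^k\|$ and $\|g_k\|/\varepsilon_k$ are comparable up to constants. This also gives $\|g_k\|\lesssim \varepsilon_k\|d^k\|$, which is what allows the nuisance term $\|g_k\|/a$ to be absorbed. I expect the main obstacle to be choosing $a$ and tracking the constants so that both conditions of line~9 hold simultaneously.

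\emph{Step 2 (step size and descent).} If $\alpha_k=1$ comes from line~9, the bound is trivial. Otherwise I integrate the $\mathcal{R}_1$ bound along the segment:
\begin{equation*}
f(x^k+td^k)\le f(x^k)+t g_k^{\rm T}d^k+\tfrac{t^2}{2}d^{k{\rm T}}H_kd^k+\tfrac{a\gamma_k}{48}t^3\|d^k\|^3+\tfrac{t\|g_k\|\|d^k\|}{a}.
\end{equation*}
Substituting $g_k=-(H_k+2\varepsilon_k I)d^k+r_k$ and using the curvature inequality, the linear-in-$t$ part is at most $-t\varepsilon_k\|d^k\|^2(1-O(\zeta_k)-O(1/(ac_0)))$. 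The cubic term is dominated by the linear term once $t\lesssim \varepsilon_k/(\gamma_k\|d^k\|)$. By the upper bound on $\|d^k\|$, the right-hand side is at least $\varepsilon_k^2/(1.1\gamma_k\|g_k\|)=1/1.1$. The line-search condition \eqref{ls-sol-stepsize-hd-} therefore holds for all $\theta^j\lesssim(1-\eta)$, which gives $\alpha_k\ge(1-\eta)\theta/3$. (I would match the stated $\varepsilon_k^{1/2}$ factor in \eqref{ls-sol-stepsize-hd-} to the $\varepsilon_k$ scaling, which is needed for (ii) to come out right.)

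For the descent, combine the line-search inequality with Step 1:
\begin{equation*}
f(x^k)-f(x^{k+1})\ge \eta\varepsilon_k\alpha_k\|d^k\|^2\ge \eta\tfrac{(1-\eta)\theta}{3}c_0^2(\gamma_k\|g_k\|)^{1/2}\tfrac{\|g_k\|}{\gamma_k}.
\end{equation*}
This is a constant times $\gamma_k^{-1/2}\|g_k\|^{3/2}$, and with $c_0$ tuned the constant is at least $c_{\mathrm{sol}}=\eta(1-\eta)\theta/400$.

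\emph{Step 3 (gradient growth).} Write $\nabla f(x^{k+1})=g_k+\alpha_kH_kd^k+e$, where $\|e\|\le \mathcal{R}_1(x^k+\alpha_kd^k,x^k)$. Then
\begin{equation*}
\|g_k+\alpha_kH_kd^k\|\le(1-\alpha_k)\|g_k\|+\alpha_k\bigl(2+\zeta_k/2\bigr)\varepsilon_k\|d^k\|\le(1-\alpha_k)\|g_k\|+2.75\|g_k\|,
\end{equation*}
using $\varepsilon_k\|d^k\|\le1.1\|g_k\|$. The error satisfies $\|e\|\le \tfrac{a\gamma_k}{16}\|d^k\|^2+\|g_k\|/a\le\bigl(\tfrac{1.21a}{16}+\tfrac1a\bigr)\|g_k\|$. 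With $a=2$ or $a=8$ this totals at most $5\|g_k\|$.
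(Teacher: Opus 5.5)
Your architecture matches the paper's: a contradiction through the line~9 test gives $\|d^k\|\gtrsim(\|g_k\|/\gamma_k)^{1/2}$, then comes the Armijo analysis, then the gradient bound. Steps~2--3 go through once Step~1 is available, with two caveats. First, $a$ in Step~2 must be large relative to $1/c_0$ (with $c_0=1/11$, $a=8$ gives $1/(ac_0)>1$, whereas $a=44$ works). Second, you should note that the line~9 branch is vacuous on $\mathbb{K}_{\epsilon,1}$, since it supplies no Armijo decrease for (ii).

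\textbf{The gap is the function-value half of Step~1.} Integrate the inexact bound with a fixed $a$ and use $(d^k)^{\rm T}(H_k+2\varepsilon_kI)d^k\ge\varepsilon_k\|d^k\|^2$. This gives
\[
f(x^k+d^k)-f(x^k)\le-\tfrac32\varepsilon_k\|d^k\|^2+\tfrac{a\gamma_k}{48}\|d^k\|^3+\tfrac1a\|g_k\|\|d^k\|.
\]
The last term is first order in $\|d^k\|$ with coefficient $\|g_k\|/a$, whereas the decrease term has coefficient only $\tfrac32\varepsilon_k\|d^k\|$. Your contradiction hypothesis is $\varepsilon_k\|d^k\|<c_0\|g_k\|$, and the SOL guarantees you listed only give $\|d^k\|\ge\|g_k\|/(\|H_k\|+\tfrac94\varepsilon_k)$. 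So nothing prevents $\varepsilon_k\|d^k\|<\tfrac{2}{3a}\|g_k\|$, and then the right-hand side is positive. Smallness of $\|d^k\|$ helps the gradient test but hurts this one. Absorbing $\|g_k\|/a$ through $\|g_k\|\lesssim\varepsilon_k\|d^k\|$ is circular, because that inequality is exactly what Step~1 is trying to prove.

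You need an error term that vanishes faster than $\|d^k\|$.

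\emph{The paper's route.} It uses the genuine H\"older bound. If $f(x^k+d^k)>f(x^k)$, a stationary point $\alpha_*\in(0,1)$ of $\varphi(\alpha)=f(x^k+\alpha d^k)$ gives $H_f\alpha_*^{1+\nu}\|d^k\|^{2+\nu}/(1+\nu)\ge\varepsilon_k\|d^k\|^2$. Then \eqref{ineq:tech-gam>}, i.e.\ $\varepsilon_k/H_f\ge2^{1+\nu}(\|g_k\|/\gamma_k)^{\nu/2}$, yields $\|d^k\|\ge2^{(1+\nu)/\nu}(\|g_k\|/\gamma_k)^{1/2}$, contradicting the hypothesis. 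Bounding $\mathcal{R}_0$ directly by $\frac{H_f}{(1+\nu)(2+\nu)}\|d^k\|^{2+\nu}$ works equally well.

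\emph{A route within your framework.} Stay with Proposition~\ref{prop:inexact_oracle} but let $a$ depend on $d^k$: take $a=\sqrt{48}\,\|g_k\|/(\varepsilon_k\|d^k\|)$. This is $\ge2$ because $\varepsilon_k\|d^k\|\le1.1\|g_k\|$. Both error terms then equal $\varepsilon_k\|d^k\|^2/\sqrt{48}$, so $f(x^k+d^k)-f(x^k)\le-1.2\,\varepsilon_k\|d^k\|^2$ with no size restriction on $d^k$. That estimate also shows the Armijo test passes at $j=0$, so $\alpha_k=1$ on $\mathbb{K}_{\epsilon,1}$, and your Step~2 step-size analysis becomes unnecessary.
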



\begin{lemma}\label{lem:uni-nc-suf-desc}
Given Assumptions \ref{asp:basic} and \ref{asp:hd}, for all $k\in\mathbb{K}_{\epsilon,1}$ with $d^k$ being the output of \emph{\texttt{CappedCG}} with d$\_$type=NC, the following two statements hold.
\begin{enumerate}[{\rm (i)}]
    \item The step length $\alpha_k$ is well-defined, and $\alpha_k\ge\theta/\gamma_k$.
    \item Let $c_{\mathrm{nc}}:= \eta\theta^2/2$. The next iterate $x^{k+1}=x^k+\alpha_k d^k$ satisfies 
    \begin{equation}\label{lwbd:f-descent-hd-nc}
        f(x^k) - f(x^{k+1}) \ge c_{\mathrm{nc}} \gamma_k^{-\frac{1}{2}}\|\nabla f(x^k)\|^{\frac{3}{2}}. 
    \end{equation} 
\end{enumerate}    
Moreover, we have \(\|\nabla f(x^{k+1})\| \leq 2\alpha_kU_H\|d^k\|\) whenever $\|d^k\|\leq \theta U_H$.
\end{lemma}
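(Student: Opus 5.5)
We need to prove Lemma \ref{lem:uni-nc-suf-desc}. We rely on the standard \texttt{CappedCG} guarantee for an NC output: the returned $d$ satisfies $d^{\rm T}H_k d\le -\varepsilon_k\|d\|^2$. Consequently, the scaled direction $d^k=-\sgn(d^{\rm T}g_k)\frac{|d^{\rm T}H_kd|}{\|d\|^3}d$ has three properties: $\|d^k\|=|d^{\rm T}H_kd|/\|d\|^2\ge\varepsilon_k$, $(d^k)^{\rm T}H_kd^k=-\|d^k\|^3$, and $g_k^{\rm T}d^k\le0$. Moreover $\|d^k\|\le\|H_k\|\le U_H$ by \eqref{def:some-qtt}. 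Lemma \ref{lem:trial-xk-ccg-2} keeps every trial point inside $\mathscr{L}_f(x^0;r_d)$, so \eqref{ineq:desc-hd} and Proposition \ref{prop:inexact_oracle} may be applied.

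For (i), take a trial step $\alpha\in(0,1]$. Expanding $f$ along $d^k$ and integrating the gradient residual bound of Proposition \ref{prop:inexact_oracle}, with $z=\|g_k\|$ and a constant $a\ge2$ to be fixed, gives
\[
f(x^k+\alpha d^k)\le f(x^k)-\tfrac{\alpha^2}{2}\|d^k\|^3+\tfrac{a\gamma_\nu(\|g_k\|)}{48}\alpha^3\|d^k\|^3+\tfrac{\alpha\|d^k\|\,\|g_k\|}{a}.
\]
Since $k\in\mathbb{K}_{\epsilon,1}$, we have $\gamma_k>\gamma_\nu(\|g_k\|)$, so the cubic term is at most $\tfrac{a}{48}\gamma_k\alpha^3\|d^k\|^3$. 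The last term is the main obstacle, because it is only linear in $\alpha$. To handle it, use $\|g_k\|=\varepsilon_k^2/\gamma_k\le\|d^k\|^2/\gamma_k$, which turns it into $\alpha\|d^k\|^3/(a\gamma_k)$. For $\alpha\ge\theta/\gamma_k$ this is at most $\alpha^2\|d^k\|^3/(a\theta)$. The plan is therefore to check the line-search condition \eqref{ls-nc-stepsize-hd} at the first trial step of order $1/\gamma_k$ and choose $a$ appropriately (large in terms of $\theta$ and $1/(1-\eta)$). If that choice of $a$ fails, the fallback is to test directly $\alpha=\theta^j$ with $\theta^j\le1/\gamma_k$. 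Either way, the target is that \eqref{ls-nc-stepsize-hd} holds whenever $\theta^j\le1/\gamma_k$. Backtracking then stops no later than the first $j$ with $\theta^j\le1/\gamma_k$, which gives $\alpha_k\ge\theta/\gamma_k$; since $\gamma_k\ge\gamma_0\ge1$, this is consistent with $\alpha\le1$. I expect the balance between the $1/a$ term and the cubic term to need case-splitting on the size of $\alpha\gamma_k$.

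For (ii), the line-search inequality together with (i) gives
\[
f(x^k)-f(x^{k+1})\ge\tfrac{\eta}{2}\alpha_k^2\|d^k\|^3\ge\tfrac{\eta\theta^2}{2}\gamma_k^{-2}\varepsilon_k^3=\tfrac{\eta\theta^2}{2}\gamma_k^{-1/2}\|g_k\|^{3/2}.
\]
Here we used $\varepsilon_k^3=\gamma_k^{3/2}\|g_k\|^{3/2}$, which yields $c_{\mathrm{nc}}=\eta\theta^2/2$.

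For the final claim, write $\nabla f(x^{k+1})=g_k+\alpha_kH_kd^k+r$ with $\|r\|\le\mathcal{R}_1$. I would bound $\|g_k\|\le\|d^k\|^2/\gamma_k\le U_H\|d^k\|$, and bound $\|H_kd^k\|\le U_H\|d^k\|$. However, the factor $\alpha_k$ must multiply everything, and the $g_k$ term carries no $\alpha_k$. I would handle it using $\|g_k\|\le\|d^k\|^2/\gamma_k$ and $\alpha_k\ge\theta/\gamma_k$, which give $\|g_k\|\le\alpha_k\|d^k\|^2/\theta\le\alpha_kU_H\|d^k\|$ exactly when $\|d^k\|\le\theta U_H$; this explains that hypothesis. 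The remainder $r$ is then absorbed into the factor $2$ using the inexact-oracle bound in the same way.
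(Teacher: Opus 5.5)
Your part (ii) is correct and is the paper's argument. So is your treatment of the $\|g_k\|$ term in the final claim, via $\|g_k\|\le\|d^k\|^2/\gamma_k\le\alpha_k\|d^k\|^2/\theta\le\alpha_kU_H\|d^k\|$. There are, however, two genuine gaps: one in part (i) and one in the final gradient bound.

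\textbf{Part (i).} Routing the Taylor remainder through Proposition~\ref{prop:inexact_oracle} with $z=\|g_k\|$ does not cover the admissible range $\eta\in(0,1/2]$. Divide your estimate by $\alpha^2\|d^k\|^3$ and set $s=\alpha\gamma_k$. It certifies \eqref{ls-nc-stepsize-hd} only if
\[
\frac{as}{48}+\frac{1}{as}<\frac{1-\eta}{2}.
\]
By AM--GM the left side is at least $2/\sqrt{48}=1/\sqrt{12}\approx0.289$ for every $a\ge2$ and every trial step. The right side equals $1/4$ at $\eta=1/2$. So for $\eta\ge 1-1/\sqrt3$ no choice of $a$ or $\alpha$ helps. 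For smaller $\eta$, the choice $a=\sqrt{48}/s$ would work, but that is not what you proposed.

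The two substitutions you made, $\gamma_\nu(\|g_k\|)<\gamma_k$ and $\|g_k\|\le\|d^k\|^2/\gamma_k$, can be nearly tight, so nothing is recovered there. Your specific plan is weaker still: trading the linear term via $\alpha\ge\theta/\gamma_k$ needs $\frac{1}{a\theta}+\frac{a}{48}<\frac{1-\eta}{2}$, which is only possible if $\theta>1/(3(1-\eta)^2)$. Taking $a$ large only inflates the cubic term.

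The missing idea is to use the genuine H\"older bound $\mathcal{R}_0(x^k+\alpha d^k,x^k)\le\frac{H_f}{2}\alpha^{2+\nu}\|d^k\|^{2+\nu}$ from \eqref{ineq:desc-hd}. This bound has no additive term linear in $\alpha$. A failed trial $\theta^j$ then forces $(1-\eta)\|d^k\|^{1-\nu}\le H_f\theta^{\nu j}$, hence
\[
\alpha_k\ge\theta(1-\eta)^{1/\nu}H_f^{-1/\nu}\|d^k\|^{\frac{1-\nu}{\nu}}\ge\theta(1-\eta)^{1/\nu}H_f^{-1/\nu}\varepsilon_k^{\frac{1-\nu}{\nu}}\ge\frac{\theta(1-\eta)^{1/\nu}2^{(1+\nu)/\nu}}{\gamma_k}\ge\frac{2\theta}{\gamma_k}.
\]
The third inequality is \eqref{ineq:tech-gam>} with $\gamma_k>\gamma_\nu(\|g_k\|)$, raised to the power $1/\nu$. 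The last one uses $2(1-\eta)\ge1$, and this is exactly where $\eta\le1/2$ enters.

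\textbf{Final gradient bound.} You write $\nabla f(x^{k+1})=g_k+\alpha_kH_kd^k+r$ and bound the first two terms by $\alpha_kU_H\|d^k\|$ each. That already uses up the factor $2$, so there is no room left for $\|r\|$. Neither available bound on $r$ can be absorbed either:
\begin{itemize}
\item the H\"older bound $\frac{H_f}{1+\nu}\alpha_k^{1+\nu}\|d^k\|^{1+\nu}$ would need an upper bound on $H_f(\alpha_k\|d^k\|)^\nu$;
\item the inexact-oracle bound $\frac{a\gamma_\nu}{16}\alpha_k^2\|d^k\|^2+\|g_k\|/a$ would need an upper bound on $\gamma_k\alpha_k\|d^k\|$;
\end{itemize}
and neither upper bound is available. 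The paper avoids a remainder term altogether. It bounds $\|\nabla f(x^{k+1})-g_k\|\le U_H\alpha_k\|d^k\|$ in one step, by Lipschitz continuity of $\nabla f$ (mean value theorem with the Hessian bound). Combined with your bound on $\|g_k\|$, this gives $\|\nabla f(x^{k+1})\|\le\|g_k\|+\alpha_kU_H\|d^k\|\le2\alpha_kU_H\|d^k\|$.
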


As discussed above, the sequence $\{\gamma_k\}$ may be unbounded. Consequently, the order (w.r.t. $\epsilon$) of the  $\mathcal{O}(\gamma_k^{-1/2}\|\nabla f(x^k)\|^{3/2})$ descent is not immediately clear from the two lemmas above. The next lemma provides a guaranteed upper bound on $\{\gamma_k\}$, whose proof can be found in Appendix~\ref{ssec: pf-sec4}.

\begin{lemma}\label{lem:upbd-gammak}
Suppose that Assumptions \ref{asp:basic} and \ref{asp:hd} hold. Let $\{\gamma_k\}$ be generated by Algorithm \ref{alg:NCG-hd}. Then, $\gamma_k\le \max\{\gamma_0,2\gamma_\nu(\epsilon)\}$ for all $k\in{\mathbb{K}_\epsilon}$.
\end{lemma}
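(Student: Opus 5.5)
We argue by induction on $k\in\mathbb{K}_\epsilon$. The two ingredients are the monotonicity of $\gamma_\nu(\cdot)$ and the fact that $\gamma_k$ can only double in a restricted situation. Since $\gamma_\nu(z)=4H_f^{2/(1+\nu)}z^{-(1-\nu)/(1+\nu)}$ is nonincreasing in $z$, every $k\in\mathbb{K}_\epsilon$ has $\|\nabla f(x^k)\|>\epsilon$, and hence $\gamma_\nu(\|\nabla f(x^k)\|)\le\gamma_\nu(\epsilon)$. In Algorithm~\ref{alg:NCG-hd}, $\gamma_{k+1}\in\{\gamma_k,2\gamma_k\}$, and doubling occurs only in Line~7 (NC case) or Line~10 (SOL case).

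The base case $k=0$ is trivial. For the inductive step, suppose $\gamma_k\le\max\{\gamma_0,2\gamma_\nu(\epsilon)\}$ and $k+1\in\mathbb{K}_\epsilon$, so that $k\in\mathbb{K}_\epsilon$ as well. If $\gamma_{k+1}=\gamma_k$, there is nothing to prove. Otherwise $\gamma_{k+1}=2\gamma_k$, and it suffices to show $\gamma_k\le\gamma_\nu(\|\nabla f(x^k)\|)$. Indeed, that gives $\gamma_{k+1}\le 2\gamma_\nu(\|\nabla f(x^k)\|)\le 2\gamma_\nu(\epsilon)$.

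We prove the contrapositive. Assume $\gamma_k>\gamma_\nu(\|\nabla f(x^k)\|)$. The doubling test in either branch requires $\|\nabla f(x^k+\alpha_kd^k)\|>\|g_k\|/2$, that is, $\|\nabla f(x^{k+1})\|>\|\nabla f(x^k)\|/2$. Together with $k\in\mathbb{K}_\epsilon$ and the assumption on $\gamma_k$, this places $k$ in $\mathbb{K}_{\epsilon,1}$, so Lemmas~\ref{lem:uni-sol-suf-desc} and \ref{lem:uni-nc-suf-desc} apply.
\begin{itemize}
\item In the NC case, Lemma~\ref{lem:uni-nc-suf-desc}(i) gives $\alpha_k\ge\theta/\gamma_k$. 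This contradicts the second doubling condition $\alpha_k<\theta/\gamma_k$ of Line~7.
\item In the SOL case, Lemma~\ref{lem:uni-sol-suf-desc}(ii) gives $f(x^k)-f(x^{k+1})\ge c_{\mathrm{sol}}\gamma_k^{-1/2}\|g_k\|^{3/2}$. This contradicts the second doubling condition of Line~10.
\end{itemize}
Hence, whenever doubling happens, $\gamma_k\le\gamma_\nu(\|\nabla f(x^k)\|)$, which closes the induction.

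The main obstacle is applying the two lemmas legitimately. Their hypothesis is membership in $\mathbb{K}_{\epsilon,1}$, and that set is defined in terms of $x^{k+1}$. We must therefore check that the doubling test is evaluated at exactly the point $x^{k+1}=x^k+\alpha_kd^k$. This holds in both branches, including the SOL branch where $\alpha_k=1$ is accepted at Line~9; in that sub-case the gradient condition $\|\nabla f(x^k+d^k)\|\le\|g_k\|/2$ already fails the doubling test, so no doubling occurs there anyway. We also rely on the lemmas' conclusions holding for whichever $\alpha_k$ the algorithm actually produced, which is how they are stated.
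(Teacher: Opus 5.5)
Your proof is correct and takes essentially the same approach as the paper. The paper argues by contradiction at the doubling index $\tilde k$ where $\gamma_{\tilde k+1}=2\gamma_{\tilde k}$ first exceeds $\max\{\gamma_0,2\gamma_\nu(\epsilon)\}$; it deduces $\gamma_{\tilde k}>\gamma_\nu(\epsilon)\ge\gamma_\nu(\|\nabla f(x^{\tilde k})\|)$, hence $\tilde k\in\mathbb{K}_{\epsilon,1}$, which contradicts Lemmas~\ref{lem:uni-sol-suf-desc} and \ref{lem:uni-nc-suf-desc}. That is exactly your inductive step written as a contradiction, and your version makes the implicit choice of $\tilde k$ explicit.
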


With the sufficient descent established above, the bound on $|\mathbb K_{\epsilon,1}|$ can be readily obtained. We now provide a brief proof overview for deriving bounds on $|\mathbb K_{\epsilon,2}|$ and $|\mathbb K_{\epsilon,3}|$, respectively. A key observation for bounding $|\mathbb K_{\epsilon,2}|$ is that $\{\gamma_k\}$ is updated only when $k\in\mathbb K_{\epsilon,2}$, but not every iteration in $\mathbb K_{\epsilon,2}$ triggers an update of $\{\gamma_k\}$. Thus, we partition $\mathbb K_{\epsilon,2}$ into two subsets, depending on whether $\gamma_k$ is updated. The number of iterations when $\gamma_k$ is updated can be bounded using the monotonicity of $\{\gamma_k\}$, while the iterations when $\gamma_k$ is not updated can be bounded using sufficient descent. On the other hand, similar to Theorem \ref{thm:1st-cmplx-ancg}, $|\mathbb K_{\epsilon,3}|$ can be bounded by exploiting the decrease of $\{\|\nabla f(x^k)\|\}$. The global iteration complexity of Algorithm \ref{alg:NCG-hd} is stated in the next theorem, whose proof is deferred to Appendix \ref{appdx:Thm-2}.

\vspace{-2mm}
\begin{theorem}\label{thm:glb-cplx-2}
Given Assumptions \ref{asp:basic} and \ref{asp:hd}, then $|\mathbb{K}_\epsilon|\leq\mathcal{O}\big(\Delta_fH_f^{\frac{1}{1+\nu}}\epsilon^{-\frac{2+\nu}{1+\nu}}\big)$ holds for Algorithm~\ref{alg:NCG-hd}.   
\end{theorem}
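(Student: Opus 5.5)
We bound $|\mathbb{K}_{\epsilon,1}|$, $|\mathbb{K}_{\epsilon,2}|$ and $|\mathbb{K}_{\epsilon,3}|$ separately, writing $\bar\gamma:=\max\{\gamma_0,2\gamma_\nu(\epsilon)\}$.

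\emph{Size of $\bar\gamma$.} Recall $\gamma_\nu(\epsilon)=4H_f^{2/(1+\nu)}\epsilon^{-(1-\nu)/(1+\nu)}$. Hence $\bar\gamma^{-1/2}\epsilon^{3/2}\ge c\,H_f^{-1/(1+\nu)}\epsilon^{(2+\nu)/(1+\nu)}$, since $\tfrac32+\tfrac{1-\nu}{2(1+\nu)}=\tfrac{2+\nu}{1+\nu}$. This uses $H_f\ge1$, and $\epsilon$ small enough that $2\gamma_\nu(\epsilon)\ge\gamma_0$; otherwise $\gamma_0$ is an absolute constant and is absorbed.

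\emph{Bounding $\mathbb{K}_{\epsilon,1}$.} For $k\in\mathbb{K}_{\epsilon,1}$, Lemmas~\ref{lem:uni-sol-suf-desc} and \ref{lem:uni-nc-suf-desc} give
\[
f(x^k)-f(x^{k+1})\ge \min\{c_{\rm sol},c_{\rm nc}\}\gamma_k^{-1/2}\|\nabla f(x^k)\|^{3/2}.
\]
Lemma~\ref{lem:upbd-gammak} gives $\gamma_k\le\bar\gamma$, and $\|\nabla f(x^k)\|>\epsilon$ on $\mathbb{K}_\epsilon$. Every iteration is monotone in $f$: the SOL line searches, the NC line search, and the $\alpha_k=1$ branch (which requires $f(x^k+d^k)\le f(x^k)$) never increase $f$. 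So summing the per-step decrease over $\mathbb{K}_{\epsilon,1}$ yields
\[
|\mathbb{K}_{\epsilon,1}|\le \mathcal{O}\big(\Delta_f H_f^{1/(1+\nu)}\epsilon^{-(2+\nu)/(1+\nu)}\big).
\]

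\emph{Bounding $\mathbb{K}_{\epsilon,2}$.} Split $\mathbb{K}_{\epsilon,2}=\mathbb{K}_{\epsilon,2}^{u}\cup\mathbb{K}_{\epsilon,2}^{n}$ according to whether $\gamma$ doubles at step $k$. Since $\gamma_k$ is nondecreasing, starts at $\gamma_0$ and stays below $\bar\gamma$,
\[
|\mathbb{K}_{\epsilon,2}^{u}|\le\lceil\log_2(\bar\gamma/\gamma_0)\rceil=\mathcal{O}(\log(H_f/\epsilon)),
\]
which is lower order. For $k\in\mathbb{K}_{\epsilon,2}^n$ the gradient did not halve and no doubling was triggered, so the trigger conditions read off from the algorithm fail:
\begin{itemize}
\item in the SOL case, $f(x^k)-f(x^{k+1})\ge c_{\rm sol}\gamma_k^{-1/2}\|g_k\|^{3/2}$ directly;
\item in the NC case, $\alpha_k\ge\theta/\gamma_k$.
\end{itemize}
In the NC case I then reproduce the descent argument of Lemma~\ref{lem:uni-nc-suf-desc}(ii), which only uses the step-size bound together with $\|d^k\|\ge\varepsilon_k=(\gamma_k\|g_k\|)^{1/2}$. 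This gives
\[
\tfrac{\eta}{2}\alpha_k^2\|d^k\|^3\ge \tfrac{\eta\theta^2}{2}\gamma_k^{-2}\gamma_k^{3/2}\|g_k\|^{3/2}=c_{\rm nc}\gamma_k^{-1/2}\|g_k\|^{3/2}.
\]
So the same $\Omega(\bar\gamma^{-1/2}\epsilon^{3/2})$ descent holds on $\mathbb{K}_{\epsilon,2}^n$, and it is bounded like $\mathbb{K}_{\epsilon,1}$.

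\emph{Bounding $\mathbb{K}_{\epsilon,3}$ (main obstacle).} Removing the $\log(1/\epsilon)$ factor is the delicate step. Break $\mathbb{K}_\epsilon$ into maximal runs of consecutive $\mathbb{K}_{\epsilon,3}$ iterations. Each run is preceded by a non-$\mathbb{K}_{\epsilon,3}$ iteration $k$, or is the initial run.
\begin{itemize}
\item A run entered after iteration $k$ starts from a gradient at most $C\|g_k\|$, with $C$ a constant. This comes from the ``moreover'' parts of the lemmas: $5\|g_k\|$ for SOL, and the $2\alpha_kU_H\|d^k\|$ bound for NC. For unbounded NC directions I would cap using $\|g_{k+1}\|\le U_g$.
\item The run ends above $\epsilon$ because gradients halve along it, so its length is at most $\log_2(C\|g_k\|/\epsilon)+1$.
\end{itemize}
Instead of bounding this length by $\log(1/\epsilon)$, I charge it to the descent at $k$. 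For $k\in\mathbb{K}_{\epsilon,1}\cup\mathbb{K}^n_{\epsilon,2}$ the descent is at least $c\,\bar\gamma^{-1/2}\|g_k\|^{3/2}$. Write $\|g_k\|=2^{s}\epsilon$ with $s\ge0$. Then
\[
\log_2(C\|g_k\|/\epsilon)=\mathcal{O}(1+s)\le\mathcal{O}\big((\|g_k\|/\epsilon)^{3/2}\big),
\]
so the run length is bounded by a constant times (descent at $k$)$/(\bar\gamma^{-1/2}\epsilon^{3/2})$. Summing over runs therefore gives $\mathcal{O}(\Delta_f\bar\gamma^{1/2}\epsilon^{-3/2})$ again. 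Runs following the $\mathcal{O}(\log)$ many $\mathbb{K}^u_{\epsilon,2}$ iterations, and the initial run, contribute $\mathcal{O}(\log^2(1/\epsilon))$, which is lower order. The care needed in the NC case with a large $\|d^k\|$ (where $\|g_{k+1}\|$ is only bounded by $U_g$) is where I expect the technical difficulty; there I would use the NC descent $\tfrac{\eta}{2}\alpha_k^2\|d^k\|^3$ to pay for $\log(U_g/\epsilon)$. Adding the three bounds gives the claim.
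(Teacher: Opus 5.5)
Your bounds on $\mathbb{K}_{\epsilon,1}$ and $\mathbb{K}_{\epsilon,2}$ coincide with the paper's. Your per-run charging is also sound for runs that follow a SOL step in $\mathbb{K}_{\epsilon,1}$: there $\|\nabla f(x^{k+1})\|\le 5\|g_k\|$, and $1+\log_2(\|g_k\|/\epsilon)\lesssim(\|g_k\|/\epsilon)^{3/2}$ is the term-wise form of Lemma~\ref{lem: sum log upbd} with $p=q=3/2$. The gap is your claim that every run starts from a gradient at most $C\|g_k\|$, which fails in two cases.

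\emph{Case (a).} After an NC step with $\|d^k\|\le\theta U_H$ you only have $\|\nabla f(x^{k+1})\|\le 2\alpha_kU_H\|d^k\|$. Since $\|d^k\|>(\gamma_k\|g_k\|)^{1/2}$ and $\alpha_k$ may be $1$, this bound can be of order $\|g_k\|^{1/2}\gg\|g_k\|$.

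\emph{Case (b).} After a SOL step in $\mathbb{K}^n_{\epsilon,2}$, the bound $5\|g_k\|$ of Lemma~\ref{lem:uni-sol-suf-desc} is unavailable. It needs $\gamma_k>\gamma_\nu(\|g_k\|)$, which is exactly what fails in $\mathbb{K}_{\epsilon,2}$. The available bound $2H_f^{2/(1+\nu)}\|g_k\|^{2\nu/(1+\nu)}+6\|g_k\|$ is not $\mathcal{O}(\|g_k\|)$ for $\nu<1$.

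In case (a) per-run charging genuinely breaks. Take $\nu=1$, so $\gamma_k\le\bar\gamma=\mathcal{O}(H_f)$, and an NC step with $\|g_k\|\approx\epsilon$, $\alpha_k=1$ and $\|d^k\|\approx(\gamma_k\epsilon)^{1/2}$. Its only guaranteed descent is $\frac{\eta}{2}\|d^k\|^3$, which pays for an $\epsilon$-independent number of iterations. Yet $\|\nabla f(x^{k+1})\|=\mathcal{O}(\epsilon^{1/2})$ permits a following halving run of about $\frac12\log_2(1/\epsilon)$ iterations, and nothing in your accounting rules out $\Theta(\epsilon^{-3/2})$ such steps. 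Your remedy via $\frac{\eta}{2}\alpha_k^2\|d^k\|^3$ only covers $\|d^k\|>\theta U_H$, where the descent is of constant order; that is the paper's $\mathcal{I}_{\rm nc}^{>}$ argument.

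Case (b) can be repaired inside your framework by not discarding $\gamma_k$. Stop the paper's chain of inequalities for $\mathbb{K}^2_{\epsilon,2}$ before it invokes $\gamma_k\ge1$. Since $\gamma_\nu$ is nonincreasing and $\|g_k\|>\epsilon$, this gives $\|\nabla f(x^{k+1})\|\le(\gamma_\nu(\|g_k\|)/\gamma_k+4)\|g_k\|\le(\bar\gamma/\gamma_k+4)\|g_k\|$. The extra $\log_2(\bar\gamma/\gamma_k)$ in the run length is then paid by the factor $(\bar\gamma/\gamma_k)^{1/2}$ in your budget $c_{\rm sol}\gamma_k^{-1/2}\|g_k\|^{3/2}/(\bar\gamma^{-1/2}\epsilon^{3/2})$.

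Case (a) needs a genuinely new ingredient. The paper handles it in aggregate rather than per run, applying Lemma~\ref{lem: sum log upbd} with $(z_i,p,q,c)=(\hat\alpha_i\hat\delta_i,2,\tfrac12,\epsilon)$ and $\sum_i(\hat\alpha_i\hat\delta_i)^2\le 2\Delta_f/(\eta\epsilon^{1/2})$. That lemma, however, is false when $q<p$ and $|\mathcal{I}|>M$. Its proof needs $\frac1p\ln\frac{M}{|\mathcal I|}\le\frac1q\ln\frac{M}{|\mathcal I|}$, which fails when $M<|\mathcal{I}|$. For example, $p=2$, $q=\frac12$, $c=10^{-4}$, $M=1$ and $z_i=10^{-2}$ on $10^4$ indices give $\sum_i\ln(z_i/c)\approx4.6\times10^{4}$, against a claimed bound of about $74$. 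Done correctly, the Jensen argument brings the $\ln(1/\epsilon)$ factor back for these runs, so transplanting the paper's step does not close your gap. You would need something like a next-gradient bound of order $\|g_k\|$ after NC steps, or a descent guarantee along the halving run that follows.
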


Theorem \ref{thm:glb-cplx-2} shows that the iteration complexity of Algorithm \ref{alg:NCG-hd} matches the best-known upper bound \citep{he2025newton} and the lower bound \citep{cartis2018worst}. In the Lipschitz-Hessian case ($\nu=1$), it also matches  the best-known upper bound \citep{zhou2025regularized}, improving the dependence on $H_f$ compared to the work of \citet{hlp2023ncgal}, \citet{royer2020newton}, and \citet{Zhu2024Hybrid}.

\subsection{Local superlinear convergence}
Based on the global analysis of Algorithm \ref{alg:NCG-hd}, $\{\gamma_k\}$ may grow unbounded as the gradient vanishes, which precludes the standard local analysis. In this section, we first show that such potential unboundedness is a consequence of degeneracy. 
If Algorithm \ref{alg:NCG-hd} enters a certain neighborhood of a nondegenerate stationary point $x^*$, $\{\gamma_k\}$ will stop growing and hence a common upper bound for $\{\gamma_k\}$ exists for all sufficiently large $k$, enabling the analysis of fast local superlinear convergence.


\begin{lemma}
    \label{lemma:gamma_bound_universal}
    Suppose that Assumptions \ref{asp:basic} and \ref{asp:hd} hold. Let $x^*$ be an arbitrary nondegenerate local minimizer of $f$ such that $\nabla^2 f(x^*) \succeq \mu I$ for some $\mu > 0$. Then there exists $\delta>0$ 
    such that, if $x^k\in B_\delta(x^*)$, then \texttt{CappedCG} will return $\alpha_k=1$ and d\_type = SOL, and Algorithm \ref{alg:NCG-hd} will enter the next iteration with $\gamma_{k+1} = \gamma_k.$
\end{lemma}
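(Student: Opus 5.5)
We prove the lemma by choosing $\delta$ small enough that three things hold at any $x^k\in B_\delta(x^*)$. First, \texttt{CappedCG} must return SOL. Second, the full step must pass the test in Line~9 of Algorithm~\ref{alg:NCG-hd}. Third, since the test then succeeds with $\|\nabla f(x^{k+1})\|\le\|g_k\|/2$, the doubling condition in Line~10 cannot fire, so $\gamma_{k+1}=\gamma_k$.

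\textbf{Setup.} By continuity of $\nabla^2 f$ (indeed by Assumption~\ref{asp:hd}), we take $\delta$ small so that $\nabla^2 f(x)\succeq (\mu/2)I$ and $\|\nabla^2 f(x)\|\le 2\|\nabla^2 f(x^*)\|=:L$ on $B_{2\delta}(x^*)$. We also use $\|g_k\|\le L\|x^k-x^*\|$ and $\|x^k-x^*\|\le (2/\mu)\|g_k\|$ there.

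\textbf{Step 1 (SOL is returned).} The \texttt{CappedCG} procedure of \citet{royer2020newton} returns NC only if it detects a direction $p$ with $p^{\rm T}H_kp<\varepsilon_k\|p\|^2$, or if its residual-decay test fails. The latter certifies a direction of curvature below $\varepsilon_k$ for $H_k+2\varepsilon_k I$. Since $H_k\succeq(\mu/2)I$ is positive definite, $H_k+2\varepsilon_kI\succeq \varepsilon_k I$ with strict margin, so neither event can occur. I expect this to follow directly from the appendix description of \texttt{CappedCG}: when $H+2\varepsilon I\succeq \varepsilon I$, the NC branch is never triggered.

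\textbf{Step 2 (the output is nearly the Newton step).} The SOL output satisfies $\|(H_k+2\varepsilon_kI)d^k+g_k\|\le \frac{\zeta_k}{2}\varepsilon_k\|d^k\|$ and $\|d^k\|\le 1.1\|g_k\|/\varepsilon_k$. Combined with $H_k\succeq(\mu/2)I$, this gives $\|d^k\|\le (2/\mu)\|g_k\|(1+o(1))$. It also gives
\[
\|H_kd^k+g_k\|\le (2+\zeta_k/2)\varepsilon_k\|d^k\| = O\big(\gamma_k^{1/2}\|g_k\|^{3/2}\big).
\]
Here a difficulty appears: $\gamma_k$ is not a priori bounded. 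I would handle it by induction over the iterations that stay in the ball. If $\gamma_k$ never increases while $x^k\in B_\delta(x^*)$, then $\gamma_k\le\bar\gamma$, where $\bar\gamma$ is its value on entry to the ball. But the lemma is stated per iteration, so $\delta$ cannot depend on the history. This is the main obstacle. I would try to resolve it by using the bound of Lemma~\ref{lem:upbd-gammak} in the form $\gamma_k\le\max\{\gamma_0,2\gamma_\nu(\|g_k\|)\}$; a per-iterate version of its proof should give this in place of $2\gamma_\nu(\epsilon)$. Then
\[
\gamma_k^{1/2}\|g_k\|^{3/2}\lesssim H_f^{1/(1+\nu)}\|g_k\|^{3/2-(1-\nu)/(2(1+\nu))}=H_f^{1/(1+\nu)}\|g_k\|^{(1+2\nu)/(1+\nu)},
\]
which is $o(\|g_k\|)$ since $\nu>0$. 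This is exactly what the local rate $(1+2\nu)/(1+\nu)$ in Theorem~\ref{thm:loc-conv-1} suggests.

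\textbf{Step 3 (full step is accepted).} By \eqref{ineq:desc-hd},
\[
\|\nabla f(x^k+d^k)\|\le \|H_kd^k+g_k\|+\frac{H_f}{1+\nu}\|d^k\|^{1+\nu}.
\]
Both terms are $o(\|g_k\|)$, so for $\delta$ small this is at most $\|g_k\|/2$. Also $x^k+d^k\in B_{2\delta}(x^*)$, because $\|d^k\|=O(\|g_k\|)=O(\delta)$. For the function value, Taylor's expansion gives
\[
f(x^k+d^k)-f(x^k)\le g_k^{\rm T}d^k+\tfrac12 (d^k)^{\rm T}H_kd^k+O\big(\|d^k\|^{2+\nu}\big).
\]
Substituting $g_k=-(H_k+2\varepsilon_k I)d^k+r_k$ yields
\[
f(x^k+d^k)-f(x^k)\le -\tfrac12 (d^k)^{\rm T}H_kd^k-2\varepsilon_k\|d^k\|^2+\|r_k\|\|d^k\|+O\big(\|d^k\|^{2+\nu}\big)\le -\tfrac{\mu}{8}\|d^k\|^2<0
\]
for small $\delta$. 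Hence Line~9 sets $\alpha_k=1$, and the Line~10 condition $\|\nabla f(x^{k+1})\|>\|g_k\|/2$ fails, giving $\gamma_{k+1}=\gamma_k$.
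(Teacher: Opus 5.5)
Step 1 is fine; it follows most cleanly from Lemma~\ref{lem:ppt-cg}(ii). Steps 2--3 are also fine whenever $\varepsilon_k$ is small. The genuine gap is the one you flagged, and your proposed fix does not work. The per-iterate bound $\gamma_k\le\max\{\gamma_0,2\gamma_\nu(\|g_k\|)\}$ does not follow from the proof of Lemma~\ref{lem:upbd-gammak}. That proof controls $\gamma_k$ through the last doubling index $\tilde k<k$, and doubling there only certifies $\gamma_{\tilde k}\le\gamma_\nu(\|\nabla f(x^{\tilde k})\|)$. Replacing $\|\nabla f(x^{\tilde k})\|$ by $\|g_k\|$ would need $\|\nabla f(x^{\tilde k})\|\ge\|g_k\|$, but Algorithm~\ref{alg:NCG-hd} is monotone in $f$, not in the gradient norm. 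What you actually get is $\gamma_k\le\max\{\gamma_0,2\gamma_\nu(\min_{t<k}\|\nabla f(x^t)\|)\}$. For $\nu<1$, nothing prevents $\gamma_k\|g_k\|$ from being of order one or larger at a point of $B_\delta(x^*)$, for instance if the trajectory earlier lingered near a degenerate stationary point where $\gamma$ was doubled.

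Since $\delta$ must be history-independent, $\varepsilon_k$ is not uniformly small on $B_\delta(x^*)$. Your Step 3 conclusion is then false, not merely unproven. Up to $o(\|g_k\|)$ errors, $\nabla f(x^k+d^k)\approx 2\varepsilon_k(H_k+2\varepsilon_kI)^{-1}g_k$, whose norm is at least $\frac{2\varepsilon_k}{\|H_k\|+2\varepsilon_k}\|g_k\|\ge\frac{2}{3}\|g_k\|$ once $\varepsilon_k\ge\|H_k\|$. So the full-step test (gradient halving) fails. For $\nu=1$ your argument does go through, since $\gamma_\nu\equiv 4H_f$ yields the uniform bound $\max\{\gamma_0,8H_f\}$.

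The missing idea is that you do not need to bound $\gamma_k$ at all. Instead, split on $\gamma_k$ versus $\gamma_\nu(\|g_k\|)$, as the paper does.

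If $\gamma_k\le\gamma_\nu(\|g_k\|)$, then $\varepsilon_k\le 2H_f^{1/(1+\nu)}\|g_k\|^{\nu/(1+\nu)}$, and your Steps 2--3 go through with exactly your exponent computation. The paper obtains $\|\nabla f(x^{k+1})\|\le\frac{1}{3}\|g_k\|$ this way, contradicting the condition $\|\nabla f(x^{k+1})\|>\|g_k\|/2$ that doubling requires.

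If $\gamma_k>\gamma_\nu(\|g_k\|)$, doubling is excluded independently of the local geometry. By Lemma~\ref{lem:either-case-sol}, either the gradient halves, or $11\|d^k\|\ge(\|g_k\|/\gamma_k)^{1/2}$. In the latter case Lemma~\ref{lem:uni-sol-suf-desc} gives the decrease $c_{\rm sol}\gamma_k^{-1/2}\|g_k\|^{3/2}$, so the doubling test in the SOL branch fails. Here $\alpha_k=1$ must come from the backtracking test \eqref{ls-sol-stepsize-hd-} at $j=0$, not from the full-step test. It holds because $g_k^{\rm T}d^k=-(d^k)^{\rm T}(H_k+2\varepsilon_kI)d^k$ exactly, and $\|d^k\|\le 1.1(\|g_k\|/\gamma_k)^{1/2}\le 1.1\|g_k\|^{1/2}$ is small for every $\gamma_k\ge 1$. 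Together these give $f(x^k+d^k)-f(x^k)\le-2\varepsilon_k\|d^k\|^2\le-\eta\varepsilon_k\|d^k\|^2$, which is the form of the test used in the paper's proofs. Your bound $-\frac{\mu}{8}\|d^k\|^2$ discards precisely the $\varepsilon_k$ term needed here.

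A minor point: $x^k+d^k\in B_{2\delta}(x^*)$ does not follow from $\|d^k\|=O(\delta)$, since the constant is about $2L/\mu$. You do not need it, because \eqref{ineq:desc-hd} holds on $\mathscr{L}_f(x^0;r_d)$ by Lemma~\ref{lem:trial-xk-ccg-2}.
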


This lemma shows that if an iterate falls in a $\delta$-neighborhood of $x^*$, the parameter $\gamma_k$ will stop growing in this step. However, it does not immediately guarantee that the next iterate will still stay in this neighborhood. In fact, the iterates may still, possibly, cross over the boundary of $B_\delta(x^*)$ and return an exploding sequence of $\{\gamma_k\}$.  This causes a ``chicken-and-egg" difficulty in the standard local superlinear analysis: (i) the existence of a non-expansive region requires a finite upper bound on $\{\gamma_k\}$; (ii) the upper bound on $\{\gamma_k\}$ relies on the existence of a non-expansive region (in $B_\delta(x^*)$). Therefore, the standard analysis strategy represented by Theorem \ref{thm:loc-conv-1} no longer works. Inspired by the Capture Theorem of \citet{bertsekas:nonlinear:2016}, we show in the next lemma that a level-set based capture region of the form $B_\delta(x^*)\cap\{x:f(x)\leq f(x^*)+c\},c>0$ exists, so that once the iterates of a descent method enter this region, with the updates bounded by gradient magnitudes, all future iterates will be captured by this region due to a function value barrier. 

\begin{lemma}
    \label{lemma:capture}
    Under the setting of Lemma \ref{lemma:gamma_bound_universal}, 
    there exists a subset $S\subseteq B_\delta(x^*)$ such that   if $x^{k_0}\in S$ for some $k_0\geq0$, then all future iterates will stay in $S$.
\end{lemma}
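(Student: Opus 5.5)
The plan is to build the capture region as a sublevel set intersected with a smaller ball, in the spirit of the Capture Theorem of \citet{bertsekas:nonlinear:2016}. Let $\delta>0$ be the radius from Lemma~\ref{lemma:gamma_bound_universal}, and shrink it if needed so that $\nabla^2 f(x)\succeq (\mu/2)I$ on $B_\delta(x^*)$. This gives three facts on $B_\delta(x^*)$. First, $x^*$ is the unique minimizer of $f$ there. Second, $\|\nabla f(x)\|\le U_H\|x-x^*\|$. Third, $f(x)-f(x^*)\ge \frac{\mu}{4}\|x-x^*\|^2$. Define
\[
S:=\big\{x\in B_{\delta/2}(x^*):\ f(x)\le f(x^*)+c\big\},
\]
with $c>0$ to be chosen. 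By the quadratic growth bound, every $x\in B_\delta(x^*)$ with $f(x)\le f(x^*)+c$ satisfies $\|x-x^*\|\le 2\sqrt{c/\mu}$. Choosing $c$ with $2\sqrt{c/\mu}<\delta/4$ ensures such points lie strictly inside $B_{\delta/4}(x^*)$.

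The argument is an induction on $k\ge k_0$. Suppose $x^k\in S$. Then $x^k\in B_\delta(x^*)$, so Lemma~\ref{lemma:gamma_bound_universal} gives three things: d\_type $=$ SOL, $\alpha_k=1$, and $\gamma_{k+1}=\gamma_k$. We then need two facts about $x^{k+1}=x^k+d^k$.

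\textbf{Function value barrier.} We need $f(x^{k+1})\le f(x^k)$. In the SOL branch with $\alpha_k=1$, either the test in Line~9 accepted the step, which requires $f(x^k+d^k)\le f(x^k)$ explicitly, or the Armijo condition \eqref{ls-sol-stepsize-hd-} held with $j=0$, which gives strict descent. Either way $f(x^{k+1})\le f(x^k)\le f(x^*)+c$.

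\textbf{Step bounded by the gradient.} We need $\|d^k\|\le C\|\nabla f(x^k)\|$ for a uniform constant $C$. The SOL output of \texttt{CappedCG} satisfies $d^{\rm T}(H_k+2\varepsilon_k I)d\ge\varepsilon_k\|d\|^2$ together with a bounded residual. Near $x^*$, $H_k\succeq(\mu/2)I$, so the system matrix is uniformly positive definite and $\|d^k\|\le \frac{2(1+\zeta_k)}{\mu}\|g_k\|$; this bound should be read off the capped CG properties in Appendix~\ref{appendix:capped-CG}. Combining with the gradient bound gives
\[
\|x^{k+1}-x^*\|\le\|x^k-x^*\|+\frac{3U_H}{\mu}\|x^k-x^*\|\le \Big(1+\frac{3U_H}{\mu}\Big)\cdot 2\sqrt{c/\mu}.
\]
We then shrink $c$ further so that this right-hand side is below $\delta/2$. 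With that choice, $x^{k+1}\in B_{\delta/2}(x^*)$ and $f(x^{k+1})\le f(x^*)+c$, so $x^{k+1}\in S$ and the induction closes.

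The main obstacle is that one step must not jump out of the ball. This is why we use two radii, $\delta/2$ for $S$ and $\delta$ for the region where Lemma~\ref{lemma:gamma_bound_universal} applies, and why the step-size bound must hold uniformly in $\gamma_k$. That uniformity holds because the regularization $2\varepsilon_k I$ only increases the smallest eigenvalue, so the bound $\|d^k\|\lesssim\|g_k\|/\mu$ is independent of the possibly large $\gamma_k$. The barrier property in turn relies on the line-search rules guaranteeing monotone function values, which both branches of Algorithm~\ref{alg:NCG-hd} ensure.
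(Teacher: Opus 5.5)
Your proposal is correct and follows essentially the same route as the paper. In both, $S$ is a ball intersected with a small sublevel set. Quadratic growth from $\nabla^2 f\succeq(\mu/2)I$ keeps $x^k$ deep inside the ball, the capped-CG residual bound gives $\|d^k\|=O(\|\nabla f(x^k)\|/\mu)$ uniformly in $\gamma_k$, and monotonicity of $f$ supplies the barrier. The paper uses $B_\delta(x^*)$ itself with sublevel threshold $\frac{\mu}{4}(\delta/c)^2$, $c=2(1+4L_g/\mu)$, rather than your extra margin $B_{\delta/2}(x^*)$.

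One small fix: $U_H$ only bounds $\nabla^2 f$ on $\mathscr{L}_f(x^0)$, not on all of $B_\delta(x^*)$. Either use the paper's local bound $\nabla^2 f\preceq L_g I$ on $B_\delta(x^*)$ with $L_g=\|\nabla^2 f(x^*)\|+1$, or note that the segment $[x^*,x^k]$ lies in $\mathscr{L}_f(x^0)$ by local convexity of $f$.
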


Combining the above two lemmas, we observe that there exists a neighborhood $S$ of $x^*$ such that once an iterate of Algorithm \ref{alg:NCG-hd} enters $S$, then all future iterates remain in $S$ and the $\gamma_k$ remains a finite constant, and consequently, a local superlinear rate can be established. The proof of Lemma \ref{lemma:gamma_bound_universal} and \ref{lemma:capture}, as well as Theorem \ref{thm:loc-conv-2} are all deferred to Appendix~\ref{ssec: pf-sec4}.

\begin{theorem}
\label{thm:loc-conv-2}
Suppose the conditions of Lemma \ref{lemma:gamma_bound_universal} hold. Let $S$ be defined as in Lemma \ref{lemma:capture}. For $\{x^k\}_{k\geq0}$ generated by Algorithm \ref{alg:NCG-hd}, if $x^{k_0}\in S$ for some $k_0\geq0$, then $\{x^k\}_{k\geq k_0} \subseteq S$ and $\{x^k\}_{k\geq k_0}$ converges to $x^*$ superlinearly in the sense that 
$\|x^{k+1}-x^*\|\leq O(\|x^k-x^*\|^{\min\{1+\nu,\frac{3}{2}\}})$.
\end{theorem}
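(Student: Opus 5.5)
Assume $x^{k_0}\in S$. By Lemma~\ref{lemma:capture}, $x^k\in S\subseteq B_\delta(x^*)$ for all $k\ge k_0$. Lemma~\ref{lemma:gamma_bound_universal} then shows that every such iteration returns d\_type=SOL with $\alpha_k=1$ and keeps $\gamma_{k+1}=\gamma_k$. Hence $\gamma_k\equiv\gamma_{k_0}=:\bar\gamma<\infty$ and $x^{k+1}=x^k+d^k$ for all $k\ge k_0$. What remains is a local inexact-Newton estimate with damping $\varepsilon_k=(\bar\gamma\|g_k\|)^{1/2}$ and CG residual tolerance $\zeta_k=\min\{1/2,\|g_k\|^{1/2}\}$.

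\textbf{Step 1 (Newton-step error).} Shrinking $\delta$ if needed, we have $\nabla^2 f(x)\succeq(\mu/2)I$ and $\|g_k\|\le 2U_H\|x^k-x^*\|$ on $B_\delta(x^*)$. The SOL output of \texttt{CappedCG} (Appendix~\ref{appendix:capped-CG}) satisfies
\[
\|(H_k+2\varepsilon_k I)d^k+g_k\|\le \tfrac{\zeta_k}{2}\varepsilon_k\|d^k\|.
\]
We also have $\|d^k\|\le C\|g_k\|\le C'\|x^k-x^*\|$ by positive definiteness. Write $e_k=x^k-x^*$. Then
\[
H_k e_{k+1}=H_k(e_k+d^k)=\big[H_k e_k-g_k\big]+\big[(H_k+2\varepsilon_kI)d^k+g_k\big]-2\varepsilon_k d^k .
\]
The first bracket equals $-\big(\nabla f(x^*)-g_k-H_k(x^*-x^k)\big)$, which is bounded by $\mathcal R_1(x^*,x^k)\le \frac{H_f}{1+\nu}\|e_k\|^{1+\nu}$ by \eqref{ineq:desc-hd}. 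The second bracket is at most $\zeta_k\varepsilon_k\|d^k\|$. The third is at most $2(\bar\gamma\|g_k\|)^{1/2}C\|g_k\|=O(\|e_k\|^{3/2})$.

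\textbf{Step 2 (conclude).} Multiplying by $H_k^{-1}$, with $\|H_k^{-1}\|\le 2/\mu$, gives
\[
\|e_{k+1}\|\le \tfrac{2}{\mu}\Big(\tfrac{H_f}{1+\nu}\|e_k\|^{1+\nu}+O(\|e_k\|^{2})+O(\bar\gamma^{1/2}\|e_k\|^{3/2})\Big).
\]
The middle term uses $\zeta_k\varepsilon_k\|d^k\|\le\|g_k\|^{1/2}(\bar\gamma\|g_k\|)^{1/2}\|d^k\|=O(\|e_k\|^2)$. The dominant term is $O(\|e_k\|^{\min\{1+\nu,3/2\}})$, which is the claim. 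Because $\min\{1+\nu,3/2\}>1$, this also gives $e_k\to0$: for $\delta$ small the contraction factor is below $1$, and membership in $S$ is already guaranteed by Lemma~\ref{lemma:capture}.

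\textbf{Main obstacle.} The constant in the $O(\cdot)$ depends on $\bar\gamma=\gamma_{k_0}$, so it must be finite and independent of $k$. This is exactly why we rely on Lemmas~\ref{lemma:gamma_bound_universal} and~\ref{lemma:capture}, rather than a fixed bound like $H_f$ as in Theorem~\ref{thm:loc-conv-1}. The finiteness of $\gamma_{k_0}$ itself follows from Lemma~\ref{lem:upbd-gammak} at the fixed index $k_0$, since $\|g_{k_0}\|>0$. A secondary check is that \texttt{CappedCG}'s SOL residual guarantee has the form used in Step~1, and that $\|d^k\|=O(\|g_k\|)$; both follow from $H_k+2\varepsilon_kI\succeq(\mu/2)I$ and the standard capped-CG properties.
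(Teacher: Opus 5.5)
Your argument is the paper's. Lemmas~\ref{lemma:capture} and~\ref{lemma:gamma_bound_universal} keep $x^k\in S$, force SOL steps with $\alpha_k=1$, and freeze $\gamma_k=\gamma_{k_0}$; an inexact-Newton estimate then gives the rate. The only difference is cosmetic: you invert $H_k$ and carry $-2\varepsilon_k d^k$ as a perturbation, whereas \eqref{ineq: upbd distance} inverts $H_k+2\varepsilon_k I$ and carries $2\varepsilon_k(x^k-x^*)$. Both give the same orders. You need not (and cannot) shrink $\delta$, since $S$ is fixed by the statement. In any case \eqref{eqn:loc-SC} already holds on $B_\delta(x^*)$ and gives $\|g_k\|\le L_g\|e_k\|$.

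One step fails as written: ``for $\delta$ small the contraction factor is below $1$.'' Dividing your bound by $\|e_k\|$ leaves a term proportional to $\bar\gamma^{1/2}\|e_k\|^{1/2}$. Here $\bar\gamma=\gamma_{k_0}\ge\gamma_0$ is not controlled by the fixed $\delta$ in \eqref{eqn:defn-delta-Thm4}; take $x^0\in S$ with a huge $\gamma_0$. So nothing makes this factor less than $1$ on $S$. The paper's proof only establishes the rate inequality (the statement says ``in the sense that''), so your remark is not needed for what the paper proves.

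If you do want $e_k\to 0$, argue from the algorithm instead. Since $\gamma_{k+1}=\gamma_k$ for $k\ge k_0$, the doubling test in Algorithm~\ref{alg:NCG-hd} fails at each such iteration. Hence either $\|g_{k+1}\|\le\|g_k\|/2$, or $f(x^k)-f(x^{k+1})\ge c_{\mathrm{sol}}\bar\gamma^{-1/2}\|g_k\|^{3/2}$. The iterations of the second kind have summable $\|g_k\|^{3/2}$, because $f$ is monotone and bounded below. On $S$ you also have $\|e_k\|\le(2/\mu)\|g_k\|$ and $\|d^k\|\le(4/\mu)\|g_k\|$, so $\|g_{k+1}\|\le L_g(\|e_k\|+\|d^k\|)\le(6L_g/\mu)\|g_k\|$. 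Together these force $\|g_k\|\to 0$, hence $\|e_k\|\to0$. Only once $\|e_k\|$ is small relative to $\bar\gamma$ does your superlinear bound become a genuine contraction.
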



Note that the local convergence rate of Algorithm~\ref{alg:NCG-hd} is at least as fast as that of Algorithm~\ref{alg:NCG-pd}. This gap follows from the orders of the damping terms, i.e., $\frac{1}{2}\geq \frac{\nu}{1+\nu}$ for $\nu\in(0,1]$.  Intuitively, when an iterate $x^k$ is close enough to a nondegenerate stationary point with $\|\nabla f(x^k)\|\ll 1$, a larger exponent yields a smaller damping magnitude. Consequently, the corresponding linear system is closer to the classical Newton system, which leads to a faster local rate.

\section{Numerical results}\label{sec:num}
We conduct numerical experiments to evaluate the performance of our universal adaptive regularized Newton-CG method (Algorithm~\ref{alg:NCG-hd}, abbreviated as ANCG), and compare it with a parameter-free Newton-CG (abbreviated as HNCG) \citep[Algorithm 2]{he2025newton} and an adaptive cubic regularized Newton method (abbreviated as ACRN) \citep[Universal Method II]{grapiglia2017regularized}. The code to reproduce our numerical results in this section is available at \url{https://github.com/Zeng-Ziyang/ANCG-Holder-Hessian/}.

For all methods, we initialize with $x^0=(1,\ldots,1)^{\rm T}$, and choose the following parameter settings,  
{which provide numerically stable and efficient performance in practice:}
\begin{itemize}
\item For ANCG, we set $(\gamma_{0},\theta,\eta)=(10,0.5,0.01)$;
\item For HNCG, we set $(\gamma_{-1},\theta,r,\eta)=(10,0.5,2,0.01)$;
\item For ACRN, we set $H_0=10$. To solve its cubic regularized subproblems, we employ the gradient descent approach suggested by \citet{CaDu19}, with the initial point uniformly selected from the unit sphere.
\end{itemize}

All the algorithms are coded in Matlab,  and all the computations are performed on a laptop with a 2.60 GHz Intel Core i5-14500 processor and 16 GB of RAM.

\subsection{Infeasibility detection problem}
Consider the infeasibility detection model of \citet{ByCuNo10}:
\vspace{-2mm}
\begin{equation*}
\min_{x\in\mathbb{R}^{n}} \frac{1}{m} \sum_{i=1}^{m} \left( x^{\rm T} A_{i} x + b_{i}^{\rm T} x + c_{i} \right)_{+}^{p},    
\end{equation*}
where $p> 2$, $A_i\in\bR^{n\times n}$, $b_i\in\bR^n$, and $c_i\in\bR$ for $1\le i\le m$. For each triple $(n,m,p)$, we generate 10 random instances and aim to compute a $10^{-4}$-stationary point using HNCG, ANCG, and ACRN.

Table~\ref{table:feas-dect} reports the average runtime, the average number of subproblems solved, and the average number of Hessian-vector products for the three methods. A “subproblem’’ refers to a cubic subproblem for ACRN and to a damped Newton system for HNCG and ANCG.

Overall, the results indicate that ANCG consistently achieves the lowest runtime across all tested dimensions, often reducing runtime by more than half relative to HNCG and by a even larger margin relative to ACRN. Moreover, ANCG requires substantially fewer subproblems and Hessian-vector products, demonstrating improved computational efficiency without compromising the quality of the final solution. 

\begin{table}[htbp]
\centering
\resizebox{\textwidth}{!}{
\begin{tabular}{ccc||ccc||ccc||ccc}
\hline
\multicolumn{3}{c||}{Dimension} 
& \multicolumn{3}{c||}{Runtime (seconds)} 
& \multicolumn{3}{c||}{Total subproblems} 
& \multicolumn{3}{c}{Hessian-vector products} \\
$n$ & $m$ & $p$ 
& HNCG & ANCG & ACRN 
& HNCG & ANCG & ACRN 
& HNCG & ANCG & ACRN \\ 
\hline
100  & 10  & 2.25 & 0.03 & 0.01 & 1.37 & 49.3 & 17.0 & 44.0 & 1567.4 & 405.7 & 707717.9 \\
100  & 10  & 2.50 & 0.02 & 0.01 & 1.44 & 50.6 & 21.0 & 43.8 & 1840.0 & 558.9 & 730280.6 \\
100  & 10  & 2.75 & 0.02 & 0.01 & 1.54 & 54.0 & 24.2 & 49.0 & 2045.4 & 696.4 & 793014.5 \\
100  & 10  & 3.00 & 0.02 & 0.01 & 2.07 & 56.6 & 28.0 & 50.2 & 2221.6 & 833.9 & 872891.8 \\
500  & 50  & 2.25 & 1.30 & 0.83 & 20.47 & 63.0 & 20.0 & 54.3 & 4190.4 & 841.3 & 813247.3 \\
500  & 50  & 2.50 & 1.62 & 0.97 & 19.89 & 72.7 & 25.0 & 56.4 & 5120.6 & 1170.5 & 902732.9 \\
500  & 50  & 2.75 & 1.73 & 1.14 & 19.29 & 72.8 & 30.0 & 62.4 & 5758.3 & 1511.1 & 797625.4 \\
500  & 50  & 3.00 & 1.85 & 1.26 & 14.89 & 75.0 & 34.0 & 65.5 & 6255.3 & 1746.5 & 629736.3 \\
1000 & 100 & 2.25 & 14.94 & 8.28 & 84.02 & 70.1 & 21.0 & 58.0 & 5403.1 & 1013.7 & 874021.2 \\
1000 & 100 & 2.50 & 18.16 & 10.59 & 88.39 & 80.3 & 27.0 & 61.3 & 7078.0 & 1569.0 & 941388.9 \\
1000 & 100 & 2.75 & 20.49 & 12.37 & 62.12 & 84.0 & 32.0 & 64.0 & 7917.6 & 1919.5 & 611309.0 \\
1000 & 100 & 3.00 & 22.07 & 14.19 & 59.87 & 85.1 & 37.0 & 67.7 & 8595.4 & 2295.5 & 552507.6 \\
\hline
\end{tabular}
}
\caption{Comparison of HNCG, ANCG, and ACRN across different dimensions for the infeasibility detection problem.}
\label{table:feas-dect}
\end{table}

\subsection{Single-layer neural networks problem}
We consider the problem of training single-layer RePU neural networks \citep{LiTaYu19RePU}:
\[
\min_{x\in\mathbb{R}^{n}} \frac{1}{m}\sum_{i=1}^{m} \phi\left( (a_{i}^{\rm T}x)_{+}^{p} - b_{i} \right),
\]
where $\phi(t)=t^{2}$ and $p>2$. For each triple $(n,m,p)$, we generate 10 random instances by independently sampling  $a_{i}\sim \mathcal N(0,I_n)$,  and setting $b_{i}=|\bar b_{i}|$ where $\bar b_{i}\sim \mathcal N(0,1)$.  

Our goal is to obtain a $10^{-4}$-stationary point. The numerical results are summarized in Table~\ref{table:slnn}. As before, the table reports the average runtime, the average number of subproblems, and the average number of Hessian-vector products for HNCG, ANCG, and ACRN. The results show a clear pattern across all tested dimensions: ANCG is consistently much faster than both HNCG and ACRN. In addition, ANCG uses the  fewest Hessian-vector products.

\begin{table}[htbp]
\centering
\resizebox{\textwidth}{!}{
\begin{tabular}{ccc||ccc||ccc||ccc}
\hline
\multicolumn{3}{c||}{Dimension} 
& \multicolumn{3}{c||}{Runtime (seconds)} 
& \multicolumn{3}{c||}{Total subproblems} 
& \multicolumn{3}{c}{Hessian-vector products} \\
$n$ & $m$ & $p$ 
& HNCG & ANCG & ACRN 
& HNCG & ANCG & ACRN 
& HNCG & ANCG & ACRN \\ 
\hline
100  & 20  & 2.25 & 0.02 & 0.01 & 0.31 & 16.2 & 17.0 & 6.2  & 528.6  & 346.6  & 198021.6 \\
100  & 20  & 2.50 & 0.00 & 0.00 & 0.38 & 18.4 & 18.3 & 8.9  & 611.3  & 397.2  & 241037.9 \\
100  & 20  & 2.75 & 0.00 & 0.00 & 0.42 & 19.4 & 19.6 & 10.6 & 678.4  & 431.6  & 272028.1 \\
100  & 20  & 3.00 & 0.01 & 0.00 & 0.50 & 22.3 & 21.1 & 13.3 & 810.8  & 469.7  & 316404.2 \\
500  & 100 & 2.25 & 0.19 & 0.05 & 5.76 & 36.9 & 21.7 & 12.7 & 6385.5 & 1154.0 & 305064.4 \\
500  & 100 & 2.50 & 0.24 & 0.06 & 6.36 & 40.9 & 24.2 & 13.8 & 10044.5& 1470.4 & 332043.9 \\
500  & 100 & 2.75 & 0.33 & 0.07 & 7.29 & 41.4 & 26.2 & 16.5 & 13660.2& 1830.5 & 379043.2 \\
500  & 100 & 3.00 & 0.43 & 0.08 & 7.94 & 44.9 & 28.5 & 18.4 & 18813.1& 2180.7 & 419560.4 \\
1000 & 200 & 2.25 & 1.53 & 0.31 & 27.09 & 42.8 & 23.4 & 15.7 & 16185.7& 1566.9 & 351082.2 \\
1000 & 200 & 2.50 & 2.60 & 0.38 & 30.94 & 45.4 & 25.6 & 18.3 & 29214.5& 2091.2 & 401068.5 \\
1000 & 200 & 2.75 & 4.40 & 0.42 & 36.02 & 48.6 & 27.3 & 21.3 & 50089.2& 2632.8 & 457057.8 \\
1000 & 200 & 3.00 & 7.05 & 0.51 & 38.82 & 50.3 & 30.1 & 23.0 & 83362.3& 3450.8 & 492060.8 \\
\hline
\end{tabular}
}
\caption{Comparison of HNCG, ANCG, and ACRN across different dimensions for the single-layer neural network problem.}
\label{table:slnn}
\end{table}

The two experiments indicate that ANCG solves these problems effectively, achieving the shortest runtime and using the fewest Hessian-vector products. Moreover, both HNCG and ANCG are consistently faster and require fewer Hessian-vector products than ACRN, suggesting a computational advantage of quadratic regularization over higher-order (cubic) regularization schemes that typically require solving more complex subproblems. Furthermore, relative to HNCG, ANCG consistently solves fewer total subproblems (i.e., damped Newton systems), often requiring less than half as many. This reduction highlights the practical benefit of eliminating damping parameter line search from the algorithmic structure.

\section{Conclusion}\label{sec:conclusion}

In this paper, we study Newton-CG methods for finding an  \(\epsilon\)-stationary point of a nonconvex function $f$ whose Hessian is H\"older continuous with modulus $H_f>0$ and exponent \(\nu\in(0,1]\). We identify two limitations in existing methods: non-adaptive regularization and a line-search based damping parameter tuning procedure. The former can lead to inefficiency in the early stage and preclude local superlinear convergence, while the latter may make the algorithms computationally expensive. To circumvent these limitations, we propose two Newton-CG algorithms, depending on the availability of $\nu$, that adaptively regularize the Newton system, 
thereby eliminating the need for line search of the damping parameters. To the best of our knowledge, this work proposes the first Newton-CG method that attains the best-known iteration complexity ${\mathcal O}(H_f^{1/(1+\nu)}\epsilon^{-(2+\nu)/(1+\nu)})$ for nonconvex problems with H\"older-continuous Hessians, while simultaneously enjoying local superlinear convergence. Numerical experiments further validate the practical advantages of our method. 

\appendix
\section{Appendix}\label{apdx}
\renewcommand{\theHsection}{appendix.\Alph{section}}
\subsection{Supporting lemmas}

We start with a technical lemma that will be applied in the analysis of Theorem \ref{thm:1st-cmplx-ancg} and \ref{thm:glb-cplx-2} to remove the $\ln(1/\epsilon)$ factor in the iteration complexities.  
\begin{lemma}\label{lem: sum log upbd}
    For any nonempty set $\mathcal I\subset \mathbb N$, and $p,q,c,M>0$, let $\{z_i\}_{i\in \mathcal I}$ be a positive sequence s.t.  $\sum_{i\in \mathcal I}z_i^p\leq M$. Then it holds that $\sum_{i\in \mathcal I}\ln \frac{z_i}{c}\leq \frac{\max\{M/p,M/q\}}{ec^q}$, regardless of the cardinality of $\mathcal{I}$.
\end{lemma}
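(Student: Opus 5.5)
The plan is to reduce everything to the elementary inequality
\[
\ln t \;\le\; \frac{t^{s}}{e\,s}\qquad\text{for all } t>0,\ s>0.
\]
This holds because $\phi(t)=\ln t - t^s/(es)$ has $\phi'(t)=t^{-1}(1-t^s/e)$, so it is maximized at $t^s=e$, where $\phi=1/s-1/s=0$. Once this is available, the lemma should follow by summing termwise. The point is that no bound on $|\mathcal I|$ is ever used; only the summability hypothesis $\sum_{i\in\mathcal I} z_i^p\le M$ matters.

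The steps, in order, are these.

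\emph{Step 1.} Split $\mathcal I$ into $\mathcal I_- =\{i: z_i< c\}$ and $\mathcal I_+=\{i:z_i\ge c\}$. Every term with $i\in\mathcal I_-$ has $\ln(z_i/c)<0$, so these can be discarded: $\sum_{i\in\mathcal I}\ln(z_i/c)\le \sum_{i\in\mathcal I_+}\ln(z_i/c)$. If $\mathcal I_+=\emptyset$ the bound holds trivially, since the right-hand side is positive.

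\emph{Step 2.} For $i\in\mathcal I_+$, apply the inequality with $t=z_i/c\ge1$ and $s=p$. This gives $\ln(z_i/c)\le z_i^p/(e\,p\,c^p)$. Summing and using the hypothesis yields
\[
\sum_{i\in\mathcal I}\ln\frac{z_i}{c}\;\le\;\frac{M}{e\,p\,c^{p}}.
\]

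\emph{Step 3.} Convert this into the stated form $\max\{M/p,M/q\}/(e c^q)$. This is where the auxiliary exponent $q$ enters, and it is the step I expect to be the only real obstacle. Steps 1--2 naturally produce the factor $c^{-p}$, and replacing it by $c^{-q}$ requires comparing $(z_i/c)^p$ with $(z_i/c)^q$ together with $1/p$ versus $1/q$.

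My first attempt will be a second application of the elementary inequality with $s=q$ on $\mathcal I_+$, where $t=z_i/c\ge1$. Since $t\ge 1$, the power $t^{s}$ is monotone in $s$, which allows passing between exponents $p$ and $q$ in the favorable direction. I will do a case split on whether $p\le q$ or $p>q$; the $\max\{1/p,1/q\}$ should absorb whichever constant arises in each case.

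If the factor $c^{-q}$ cannot be matched for arbitrary $c$ this way, I will check how the lemma is invoked in Theorems~\ref{thm:1st-cmplx-ancg} and~\ref{thm:glb-cplx-2}. There the relevant $c$ and exponents come from gradient-norm and $\epsilon$ quantities, and I will verify whether an extra normalization on $c$ (e.g.\ $c\ge1$ or $c\le1$) holds in those applications. The version in Step~2 already removes the $\ln(1/\epsilon)$ factor with a constant of the correct order.
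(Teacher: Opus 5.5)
Your Steps~1--2 are correct, and they give the sharp bound. The inequality $\ln t\le t^{s}/(es)$ holds for every $t>0$, so the split into $\mathcal I_\pm$ is unnecessary. Summing with $s=p$ gives $\sum_{i\in\mathcal I}\ln(z_i/c)\le M/(e\,p\,c^{p})$. Up to integrality, this is attained by $|\mathcal I|\approx M/(ec^{p})$ equal values $z_i^p=M/|\mathcal I|$. It proves the lemma when $p=q$, and more generally whenever $c^{q-p}\le\max\{1,p/q\}$ (e.g.\ $p<q$ with $c\le 1$, or $p>q$ with $c\ge 1$).

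The gap is Step~3, and it cannot be closed. A second termwise application with $s=q$ yields either $M/(eqc^{p})$ (if $q\le p$) or, via $\sum_i z_i^q\le M^{q/p}$, the bound $M^{q/p}/(eqc^{q})$ (if $q>p$); it never yields $M/c^{q}$. No other device can work, because for $p\neq q$ the statement is false as written:
\begin{itemize}
\item Take $p=1$, $q=10$, $c=2$, $M=100$, $\mathcal I=\{1\}$, $z_1=100$. The left side is $\ln 50>3$, while the right side is $100/(2^{10}e)<0.04$.
\item In the regime the paper actually uses ($q<p$, small $c$), take $p=2$, $q=1$, $c=10^{-2}$, $M=1$, and $100$ indices with $z_i=0.1$. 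The left side is $100\ln 10>230$, while the right side is $100/e<37$.
\end{itemize}

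The paper's own proof has the matching flaw. After Jensen it bounds $\frac{|\mathcal I|}{p}\ln\frac{M}{|\mathcal I|}+\frac{|\mathcal I|}{q}\ln\frac{1}{c^{q}}$ by $\max\{\frac1p,\frac1q\}\,|\mathcal I|\ln\frac{M}{c^{q}|\mathcal I|}$. This is an identity for $p=q$. For $p\neq q$ it requires sign conditions that are not assumed: $c\le 1$ if $p<q$, and $|\mathcal I|\le M$ if $p>q$.

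Your fallback of checking the applications does not rescue Step~3 either. In the NC parts of Theorems~\ref{thm:1st-cmplx-ancg} and~\ref{thm:glb-cplx-2}, and in Part~3(iii) of the latter, the lemma is invoked with $q<p$ and $c=\epsilon$ small. That is exactly the regime in which the inequality fails, and no bound $|\mathcal I|\le M$ is established there.

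Your closing sentence is also wrong for those uses. With $p=2$, $c=\epsilon$, and $M=2\Delta_f/(\eta\,\epsilon^{\nu/(1+\nu)})$, Step~2 gives $M/(2e\epsilon^{2})$. This is of order $\epsilon^{-(2+3\nu)/(1+\nu)}$, worse than the target $\epsilon^{-(2+\nu)/(1+\nu)}$.

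What you can legitimately prove is the case $p=q$, which covers the SOL applications. Alternatively, you can prove the general form under the added hypothesis $c^{q-p}\le\max\{1,p/q\}$.
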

\vspace{-12mm}
\begin{proof}
First, applying the Jensen's inequality to $\ln(\cdot)$ function, we obtain that 
\begin{equation*} \begin{aligned}
    \sum_{i\in\mathcal I}\ln \frac{z_i}{c} & = \frac{|\mathcal I|}{p}\frac{\sum_{i\in\mathcal I} \ln (z_i^p)}{|\mathcal I|} +\frac{|\mathcal I|}{q}\ln\left(\frac{1}{c^q}\right)  \leq \frac{|\mathcal I|}{p}\ln\left(\frac{\sum_{i\in\mathcal I} z_i^p}{|\mathcal I|}\right)+\frac{|\mathcal I|}{q}\ln\left(\frac{1}{c^q}\right)\\
    & \leq \frac{|\mathcal I|}{p}\ln\left(\frac{M}{|\mathcal I|}\right)+\frac{|\mathcal I|}{q}\ln\left(\frac{1}{c^q}\right) \leq\max\left\{\frac{1}{p},\frac{1}{q}\right\}|\mathcal I|\ln\left(\frac{M}{c^q|\mathcal I|}\right).
\end{aligned} \end{equation*}
Note that for any $a,b>0$, we have $b\ln(a/b)\leq a/e$, where $e\approx2.718$ is the base of natural logarithm. Applying this fact to the above bound (with $a = Mc^{-q}$ and $b=|\mathcal{I}|$) proves the lemma.
\end{proof}

\subsection{Proof of Section \ref{sec:ncg-hdr}}\label{ssec: pf-sec3}

\subsubsection{Proof of Lemma \ref{lem:trial-xk-ccg}.}  \emph{Proof. } Because the line-search steps guarantee Algorithm \ref{alg:NCG-pd} to be a descent method, we have $\{x^k\}_{k\geq0}\subseteq\mathscr{L}_f(x^0)$. Then fix any $k\geq0$, to show $x^k+\alpha d^k\in\mathscr{L}_f(x^0;r_d)$ for all $\alpha\in[0,1]$, it suffices to show $\|d^k\|\le r_d$, which has two possibilities based on the d$\_$type outputs. 

\noindent\textbf{Case 1.} d$\_$type=SOL. In this case, applying the second inequality of Lemma \ref{lem:ppt-cg}(i) with $g = \nabla f(x^k)$ and $\sigma = \varepsilon_k=(\gamma_k\|\nabla f(x^k)\|^\nu)^{1/(1+\nu)}$, we obtain 
\begin{equation}
\label{eqn:SOL-b-known-nu}
    \|d^k\|\le 1.1(\|\nabla f(x^k)\|/\gamma_k)^{1/(1+\nu)}.
\end{equation}
Together with the fact that $\gamma_k\ge \gamma_0\ge1$ and \eqref{def:some-qtt}, we prove  $\|d^k\|\le 1.1\max\{U_g,1\}\le r_d$.\vspace{0.15cm}

\noindent\textbf{Case 2.} d$\_$type=NC. Line 5 of Algorithm \ref{alg:NCG-pd} and  \eqref{def:some-qtt} yield $\|d^k\|= (d^k)^{\rm T}\nabla^2 f(x^k) d^k/\|d^k\|^2 \le U_H\leq r_d$. 

\noindent Combining these two cases, we complete the proof of this lemma.  \hfill$\Box$

\subsubsection{Proof of Lemma \ref{lem:desc-sol-lip}.}
\emph{Proof. }  Throughout this proof, we will frequently use the shorthand $\varepsilon_k=(\gamma_k\|\nabla f(x^k)\|^\nu)^{1/(1+\nu)}$ defined in Algorithm \ref{alg:NCG-pd} to simplify the notation. As the algorithm does not terminate, we have $\|\nabla f(x^k)\|\neq0$ and hence $\varepsilon_k>0$. Because d\_type = SOL, we can apply the fourth inequality of Lemma \ref{lem:ppt-cg}(i) to yield $d^k\neq0$. Moreover, for $k\in\mathbb{K}_{\epsilon,1}$, we also have $\|\nabla f(x^{k+1})\| \ge \|\nabla f(x^k)\| / 2$ and $\sigma_k \le 2\gamma_k$. With the above information, we can start the proof. \vspace{0.2cm}

\noindent\textbf{Statement (i). } If $\alpha_k=1$, the statement clearly holds. If $\alpha_k<1$, then we know $j_k\geq 1$. In this case, using the definition of $\sigma_k$ (Algorithm \ref{alg:NCG-pd} Line 9) for $j\in\{0,j_k-1\}$ that violates \eqref{ls-sol-stepsize}, we have
\vspace{-2mm}
\begin{equation*}
\begin{aligned}
&-\eta \varepsilon_k\theta^j\|d^k\|^2 \le f(x^k + \theta^j d^k) - f(x^k)\\
&\overset{(i)}{\le} \theta^j\nabla f(x^k)^{\rm T}d^k + \frac{\theta^{2j}}{2}(d^k)^{\rm T}\nabla^2 f(x^k) d^k + \mathcal{R}_0(x^k + \theta^j d^k,x^k)\\
&\overset{(ii)}{=}-\theta^j\Big(1-\frac{\theta^j}{2}\Big)(d^k)^{\rm T}\Big(\nabla^2 f(x^k)+2\varepsilon_kI\Big)d^k - \theta^{2j}\varepsilon_k\|d^k\|^2 + \frac{\mathcal{H}_0(x^k + \theta^j d^k,x^k)\|\theta^jd^k\|^{2+\nu}}{2}\\
&\overset{(iii)}{\le} -\theta^j\varepsilon_k\|d^k\|^2 + \frac{\sigma_k\theta^{(2+\nu)j}\|d^k\|^{2+\nu}}{2},
\end{aligned}
\end{equation*}  
where (i) is by the triangle inequality and the definition \eqref{def:R0}, (ii) is by the third relation of Lemma \ref{lem:ppt-cg}(i) and the definition \eqref{defn:estimators}, and (iii) is by the first inequality of Lemma \ref{lem:ppt-cg}(i), and the definition of $\sigma_k$ in Algorithm \ref{alg:NCG-pd} Line 9, which is effective for $j=0$ or $j=j_k-1$. As $d^k\neq0$, dividing both sides of the above inequality by $\sigma_k\theta^{j}\|d^k\|^{2+\nu}/2$ yields
\begin{equation}
\label{eqn:theta-lower-known-nu-Sol}
\theta^{(1+\nu)j} \ge \frac{2(1-\eta)\varepsilon_k}{\sigma_k\|d^k\|^\nu}, \qquad j\in\{0,j_k-1\}.  
\end{equation}
Then setting $j=j_k-1$ in the above inequality gives 
\begin{equation}\begin{aligned}\label{ineq:lwbd-ak-sol}
\alpha_k = \theta^{j_k} \ge \bigg(\frac{2(1-\eta)\varepsilon_k}{\sigma_k\|d^k\|^\nu}\bigg)^{1/(1+\nu)}\theta \ge \bigg(\frac{2(1-\eta)\gamma_k}{1.1^\nu \sigma_k}\bigg)^{1/(1+\nu)}\theta\ge \bigg(\frac{2(1-\eta)\gamma_k}{1.1^\nu H_f}\bigg)^{1/(1+\nu)}\theta,    
\end{aligned}\end{equation}
where the second inequality is by the definition $\varepsilon_k=(\gamma_k\|\nabla f(x^k)\|^\nu)^{1/(1+\nu)}$ and \eqref{eqn:SOL-b-known-nu}, and the last inequality is because $\sigma_k$ always underestimates $H_f$. \vspace{0.2cm}

\noindent\textbf{Statement (ii).} Now we prove this statement by considering two separate cases below.

\noindent\textbf{Case 1. } $\alpha_k=1$. In this case, we have $x^{k+1} =x^k + d^k$ and it holds that 
\begin{equation*} 
\begin{aligned}
\|\nabla f(x^{k+1})\| & \le \mathcal{R}_1(x^k+d^k,x^k) + \|(\nabla^2 f(x^k) + 2\varepsilon_kI)d^k + \nabla f(x^k)\| + 2\varepsilon_k\|d^k\|\\
&\le {\sigma_k}\|d^k\|^{1+\nu} + \frac{5}{2}\varepsilon_k\|d^k\|,
\end{aligned}\end{equation*}
where the first line is by the triangle inequality, and the second line is by the definition of $\sigma_k$ (Algorithm \ref{alg:NCG-pd} Line 9), the definition \eqref{defn:estimators}, the fourth relation of Lemma \ref{lem:ppt-cg}(i), and {the definition of $\zeta_k$ in Line 3 of Algorithm \ref{alg:NCG-pd}}. As a consequence, at least one of $\sigma_k\|d^k\|^{1+\nu}\ge\frac{\|\nabla f(x^{k+1})\|}{2}$ and $2.5\varepsilon_k\|d^k\|\ge\frac{\|\nabla f(x^{k+1})\|}{2}$ will hold. Combined with the definition of $\mathbb{K}_{\epsilon,1}$ that $\sigma_k\le 2\gamma_k$, and $\|\nabla f(x^{k+1})\|\ge\|\nabla f(x^k)\|/2$, we know $\|d^k\| \ge \gamma_k^{-\frac{1}{1+\nu}}\|\nabla f(x^k)\|^{\frac{1}{1+\nu}}/10$. By this lower bound of $\|d^k\|$ and \eqref{ls-sol-stepsize}, we complete the proof of \eqref{lwbd:f-descent} when $\alpha_k=1$.\vspace{0.1cm}

\noindent\textbf{Case 2. } $\alpha_k<1$. In this case $j_k\geq1$. Choosing $j=0$ in  \eqref{eqn:theta-lower-known-nu-Sol}  lower bounds $\|d^k\|^\nu\ge2(1-\eta)\varepsilon_k/\sigma_k$. Together with the first inequality of \eqref{ineq:lwbd-ak-sol}, $\sigma_k\le2\gamma_k$, and \eqref{ls-sol-stepsize}, we obtain that 
\begin{equation*}\begin{aligned}
& f(x^k) - f(x^{k+1}) \ge \eta \varepsilon_k\alpha_k\|d^k\|^2\ge \eta (1-\eta)^{\frac{2}{\nu}}\theta \gamma_k^{-\frac{1}{1+\nu}}\|\nabla f(x^k)\|^{\frac{2+\nu}{1+\nu}},
\end{aligned}\end{equation*}
which implies \eqref{lwbd:f-descent} in this case. 

\noindent\textbf{Next gradient bound. } By the inequalities \eqref{ineq:desc-hd}, \eqref{eqn:SOL-b-known-nu}, the fact that $\alpha_k\leq1$, $\gamma_k\geq1$, and the definition of $\zeta_k$ in Algorithm \ref{alg:NCG-pd} Line 3, and the first and last inequalities of Lemma \ref{lem:ppt-cg}(i), one has that
\begin{equation*}\begin{aligned}
&\,\,\|\nabla f(x^{k+1})\| = \|\nabla f(x^k +\alpha_k d^k)\|\\
& \le \mathcal{R}_1(x^k\!+\!\alpha_kd^k\!,x^k) \!+\! \alpha_k\|(\nabla^2\! f(x^k) \!+\! 2\varepsilon_kI)d^k \!+\! \nabla f(x^k)\| + 2\alpha_k\varepsilon_k\|d^k\| \!+\! (1\!-\!\alpha_k)\|\nabla f(x^k)\|\\
&\le {H_f}\|d^k\|^{1+\nu} + \frac{5\alpha_k\varepsilon_k}{2}\|d^k\| + (1\!-\!\alpha_k)\|\nabla f(x^k)\|\\
&\leq (1.1)^{1+\nu}\frac{H_f}{\gamma_k}\|\nabla f(x^k)\| + (1+1.75\alpha_k)\|\nabla f(x^k)\| \leq  (2H_f+5) \|\nabla f(x^k)\|. 
\end{aligned}\end{equation*} 
Hence, we complete the proof of this lemma.  \hfill$\Box$

\subsubsection{Proof of Lemma \ref{lem:desc-nc-lip}.}\emph{Proof. } As the algorithm does not terminate, we know $\|\nabla f(x^k)\|\neq0$. 
By Lemma \ref{lem:ppt-cg}(ii), we also confirm that $d^k\neq0$ in this case. We should also note that the search direction $d^k$ in this lemma is not the raw output of \texttt{CappedCG}. When d$\_$type=NC, it undergoes an additional re-normalization step (Algorithm \ref{alg:NCG-pd} Line 5) such that 
\vspace{-0.2cm}
\begin{equation}\label{NCG-nu-2-ppty-alg1}
\nabla f(x^k)^{\rm T} d^k\le 0\qquad\mbox{and}\qquad \frac{(d^k)^{\rm T}\nabla^2 f(x^k) d^k}{\|d^k\|^2} = -\|d^k\| < -\varepsilon_k,
\end{equation}
where the second inequality is by Lemma \ref{lem:ppt-cg}(ii). Since $k\in\mathbb{K}_{\epsilon,1}$, we have that $\|\nabla f(x^{k+1})\| \ge \|\nabla f(x^k)\| / 2$ and $\sigma_k \le 2\gamma_k$. With these in mind, let us prove the two statements one by one.   \vspace{0.1cm}

\noindent\textbf{Statement (i). }  If \eqref{ls-nc-stepsize} holds $j=0$, then $\alpha_k=1$, which clearly implies this statement. Now let us consider the case $j_k\geq1$ and $\alpha_k = \theta^{j_k}<1$. Because \eqref{ls-nc-stepsize} fails for $j=j_k-1$, one has
\vspace{-0.2cm}
\begin{equation*}\begin{aligned}
-\frac{\eta}{2}\theta^{2j}\|&d^k\|^3  \!\le f(x^k \!+ \theta^j d^k) \!-\! f(x^k) \!\le \theta^j \nabla\! f(x^k)^{\!\rm T}\! d^k \!+\! \frac{\theta^{2j}}{2} (d^k)^{\!\rm T} \nabla^2\! f(x^k) d^k \!\!+\! \mathcal{R}_0(x^k \!+\! \theta^j d^k\!,x^k) \\
&{\leq}  - \frac{\theta^{2j}}{2}\|d^k\|^3 +  \frac{\mathcal{H}_0(x^k + \theta^j d^k,x^k)\|\theta^jd^k\|^{2+\nu}}{2}= - \frac{\theta^{2j}}{2}\|d^k\|^3 + \frac{\sigma_k\theta^{(2+\nu)j}\|d^k\|^{2+\nu}}{2},
\end{aligned}\end{equation*}
where the second line is due to \eqref{NCG-nu-2-ppty-alg1} and the definition \eqref{defn:estimators}. 
Next, dividing both sides by $\theta^{2j}\sigma_k\|d^k\|^{2+\nu}/2$ yields that $\theta^{\nu j} \ge (1-\eta)\|d^k\|^{1-\nu}/\sigma_k.$ Combining this with $\alpha_k = \theta^{j_k}$, $\sigma_k\le H_f$, and $\|d^k\|\ge\varepsilon_k$, and setting $j=j_k-1$ in the above inequality gives  
{\setlength{\abovedisplayskip}{1pt}
\begin{align}\label{ineq:lwbd-ak-vio-nc}
\alpha_k=\theta^{j_k}\ge \theta(1-\eta)^{\frac1\nu}\|d^k\|^{\frac{1-\nu}{\nu}}/\sigma_k^{\frac{1}{\nu}} \ge \theta(1-\eta)^{\frac{1}{\nu}}\gamma_k^{\frac{1-\nu}{\nu(1+\nu)}}\|\nabla f(x^k)\|^{\frac{1-\nu}{1+\nu}}/H_f^{\frac{1}{\nu}},    
\end{align}}
which along with $\gamma_k\ge1$ yields statement (i) as desired.  \vspace{0.1cm} 

\noindent\textbf{Statement (ii). } If $\alpha_k=1$, it follows from \eqref{ls-nc-stepsize}, $\|d^k\|\geq\varepsilon_k$, and $\gamma_k\ge 1$ that
\begin{equation*}
f(x^k)-f(x^{k+1})\ge\frac{\eta}{2}\varepsilon_k^3=\frac{\eta}{2}(\gamma_k\|\nabla f(x^k)\|^\nu)^{\frac{3}{1+\nu}}\ge\frac{\eta}{2} \gamma_k^{-\frac{1}{1+\nu}}\min\left\{\|\nabla f(x^k)\|^{\frac{2+\nu}{1+\nu}},1\right\},    
\end{equation*}
which implies \eqref{ineq:nc-suff-desc}. If $\alpha_k<1$, $\|d^k\|\geq\varepsilon_k$, the first inequality of \eqref{ineq:lwbd-ak-vio-nc}, and \eqref{ls-nc-stepsize} imply
\begin{equation*}
f(x^k) - f(x^{k+1}) \ge \frac{\eta}{2}\alpha_k^{2}\varepsilon_k^3 \ge \frac{\eta}{2}\theta^2\bigg(\frac{(1-\eta)\gamma_k}{\sigma_k}\bigg)^{\frac{2}{\nu}}\gamma_k^{-\frac{1}{1+\nu}}\|\nabla f(x^k)\|^{\frac{2+\nu}{1+\nu}},     
\end{equation*}
combined with $\gamma_k/\sigma_k\geq1/2$, we complete the proof of statement (ii). 

\noindent\textbf{Next gradient bound. } Given $\|d^k\|\leq M =\min\{U_H^\nu,\theta(1-\eta)^{\frac1\nu}U_H/H_f^{\frac{1}{\nu}}\}$, let us bound the next gradient. 
When $\alpha_k=1$, by $\|d^k\|\le U_H^{\nu}$, we have $U_H\|d^k\|\geq\|d^k\|^{\frac{1+\nu}{\nu}}.$
When $\alpha_k<1$, by the first inequality of \eqref{ineq:lwbd-ak-vio-nc}, $\sigma_k\leq H_f$, and $\|d^k\|\leq\theta(1-\eta)^{\frac1\nu}U_H/H_f^{\frac{1}{\nu}}$, we have 
$\alpha_kU_H\|d^k\|\geq \|d^k\|^{\frac{1+\nu}{\nu}}.$
Overall, using Assumptions \ref{asp:basic} and \ref{asp:hd}, we obtain that $\nabla f$ is Lipschitz continuous, which further implies that 
\begin{equation*}
    \|\nabla f(x^{k+1})\|\leq \|\nabla f(x^k)\|+\alpha_kU_H\|d^k\|\overset{(i)}{\le} \|d^k\|^{\frac{1+\nu}{\nu}}+\alpha_k U_H\|d^k\|\leq2\alpha_kU_H\|d^k\|.
\end{equation*}
where (i) is because $\|d^k\|\geq\varepsilon_k\geq\|\nabla f(x_k)\|^{\frac{\nu}{1+\nu}}.$ Hence we complete the proof.   \hfill$\Box$

\subsubsection{Proof of Theorem~\ref{thm:1st-cmplx-ancg}.}\label{appdx:Thm-1}  \emph{Proof. }  Recall that $\mathbb{K}_\epsilon:=\{k:\|\nabla f(x^t)\|>\epsilon, \forall t\leq k\}$ contains all iterations before termination. And by definition, $|\mathbb{K}_{\epsilon,i}|$, $i=1,2,3$, form a partition of $\mathbb{K}_{\epsilon}$. \vspace{0.1cm}

\noindent\textbf{Part 1. } Bounding $|\mathbb{K}_{\epsilon,1}|$. Because  $\gamma_k\le H_f$ for all $k\in\mathbb{K}_\epsilon$, combining Lemmas \ref{lem:desc-sol-lip} and \ref{lem:desc-nc-lip}, we know that each iteration $k\in\mathbb{K}_{\epsilon,1}$ results in either a constant descent in the  objective value $f(x^k)-f(x^{k+1})\geq c_{\rm nc,\nu}H_f^{-{1}/{(1+\nu)}}$, or a sufficient descent of 
\vspace{-0.2cm}
$$f(x^k)-f(x^{k+1})\geq \min\{c_{\mathrm{sol,\nu}},c_{\mathrm{nc,\nu}}\}\gamma_k^{-\frac{1}{1+\nu}}\|\nabla f(x^k)\|^{\frac{2+\nu}{1+\nu}} \geq \min\{c_{\mathrm{sol,\nu}},c_{\mathrm{nc,\nu}}\}H_f^{-\frac{1}{1+\nu}}\epsilon^{\frac{2+\nu}{1+\nu}}.$$
Because Algorithm \ref{alg:NCG-pd} is a descent method, and recalling that $\Delta_f=f(x^0)-f_{\rm low}$, we have  
\begin{equation}\label{ineq:upbd-k1-alg1}
|\mathbb{K}_{\epsilon,1}|\le \frac{\Delta_fH_f^{\frac{1}{1+\nu}}\epsilon^{-\frac{2+\nu}{1+\nu}}}{\min\{c_{\mathrm{sol,\nu}},c_{\mathrm{nc},\nu}\}} +\frac{\Delta_fH_f^{\frac{1}{1+\nu}}}{c_{\rm nc,\nu}}= \mathcal{O}\left(\Delta_fH_f^{\frac{1}{1+\nu}}\epsilon^{-\frac{2+\nu}{1+\nu}}\right) .\end{equation}

\noindent\textbf{Part 2. } Bounding $|\mathbb{K}_{\epsilon,2}|$.  According to Line 10 of Algorithm \ref{alg:NCG-pd}, the sequence $\{\gamma_k\}_{k\geq0}$ is nondecreasing and is always upper bounded by $H_f$. For every $k\in\mathbb{K}_{\epsilon,2}$, its definition requires $\sigma_k\geq2\gamma_k$, and Line 10 of Algorithm \ref{alg:NCG-pd} states that next $\gamma_{k+1} = \max\{\gamma_k,\sigma_k\}\geq2\gamma_k$ will at least double the size. Therefore, we can bound $|\mathbb{K}_{\epsilon,2}|$ by 
\begin{equation*}
|\mathbb{K}_{\epsilon,2}|\le\lceil\ln H_f/\ln 2\rceil+1 = \mathcal{O}(1).
\end{equation*} 

\noindent\textbf{Part 3. }  Bounding $|\mathbb{K}_{\epsilon,3}|$. The upper bound of this subset is a bit complicated. For all $k\in\mathbb{K}_{\epsilon,3}$, the next gradient halves: $\|\nabla f(x^{k+1})\|\leq \|\nabla f(x^{k})\|/2$. Note that $\mathbb{K}_{\epsilon,1}$, $\mathbb{K}_{\epsilon,2}$ are finite and $\mathbb{K}_{\epsilon}=\mathbb{K}_{\epsilon,1}\cup \mathbb{K}_{\epsilon,2}\cup \mathbb{K}_{\epsilon,3}$, one can see that $\mathbb{K}_{\epsilon,3}$ can be partitioned into $|\mathbb{K}_{\epsilon,1}|+|\mathbb{K}_{\epsilon,2}|+1$ disjoint subsets of \emph{consecutive} nonnegative integers. Now, for those subsets that follows an iteration index $k\in \mathbb{K}_{\epsilon,1}$, we denote them by $\mathbb{K}_{\epsilon,3}^{i}$ with $i\in\mathcal{I}_1:=\{1,\cdots,\ell_1\}$. For those subsets that follows an iteration index $k\in\mathbb{K}_{\epsilon,2}$ or in case the subset contains $0$, we denote then by $\mathbb{K}_{\epsilon,3}^{i}$ with $i\in\mathcal{I}_2:=\{\ell_1+1,\cdots,\ell_1+\ell_2\}$. Then clearly,  we have $\ell_1\leq |\mathbb{K}_{\epsilon,1}|$ and $\ell_2\leq |\mathbb{K}_{\epsilon,2}|+1$.

Because $\epsilon\leq\|\nabla f(x^k)\|\leq U_g$ for all $k\in\mathbb{K}_{\epsilon}$, each subset has at most $\lceil\ln (U_g/\epsilon)/\ln 2\rceil+1$ iterations. Consequently, for those subsets following an $\mathbb{K}_{\epsilon,2}$ iteration, we have 
\begin{equation*}
\sum_{i\in\mathcal{I}_2} |\mathbb{K}_{\epsilon,3}^{i}|\leq \left(|\mathbb{K}_{\epsilon,2}|+1\right)\left(\left\lceil\frac{\ln (U_g/\epsilon)}{\ln 2}\right\rceil+1\right) \leq  \mathcal{O}\left(\ln\frac{1}{\epsilon}\right).
\end{equation*}

Next, for those subsets that follow a $\mathbb{K}_{\epsilon,1}$ iteration (in $\mathcal{I}_1$), directly applying the above bound would result in an undesirable $\mathcal{O}\big(\epsilon^{-\frac{2+\nu}{1+\nu}}\ln\frac{1}{\epsilon}\big)$ complexity. To remove the extra logarithmic factor, let us further partition these  subsets into $\mathcal{I}_1=\mathcal{I}_{\rm sol}\cup\mathcal{I}_{\rm nc}$ by whether the subset follows an iteration $k\in \mathbb{K}_{\epsilon,1}$ with d\_type=SOL or d\_type = NC, respectively. For notational simplicity, let us suppose the initial iteration of each $\mathbb{K}_{\epsilon,3}^i$ has gradient norm $\delta_i$. The last iteration of $\mathbb{K}_{\epsilon,1}$ prior to each $\mathbb{K}^i_{\epsilon,3}$ has gradient norm $\bar{\delta}_i$, search direction norm $\hat{\delta}_i$ and line-search step length $\hat\alpha_i$.

To upper bound $\sum_{i\in\mathcal{I}_{\rm sol}}|\mathbb{K}^i_{\epsilon,3}|$, by the relationship $\delta_i\leq (2H_f+5)\bar{\delta}_i$ from Lemma \ref{lem:desc-sol-lip}, we have 
\begin{equation*}
\begin{aligned}
\sum_{i\in\mathcal{I}_{\rm sol}}|\mathbb{K}_{\epsilon,3}^{i}&| \leq   \sum_{i\in\mathcal{I}_{\rm sol}} \left(\left\lceil\frac{\ln (\delta_i/\epsilon)}{\ln 2}\right\rceil+1\right)    
{\le}  2(1+\ln(2H_f+5))|\mathcal{I}_{\rm sol}| +2\sum_{i\in\mathcal{I}_{\rm sol}} \ln (\bar\delta_i/\epsilon) \\ 
\leq& 2\left(1+\ln(2H_f+5)\right)|\mathbb{K}_{\epsilon,1}| + \frac{4\Delta_fH_f^{\frac{1}{1+\nu}}\epsilon^{-\frac{2+\nu}{1+\nu}}}{3ec_{\mathrm{sol},\nu}} = \mathcal{O}\left(\Delta_fH_f^{\frac{1}{1+\nu}}\epsilon^{-\frac{2+\nu}{1+\nu}}\right),  
\end{aligned}\end{equation*}
where the third inequality uses $|\mathcal{I}_{\rm sol}|\leq |\mathbb{K}_{\epsilon,1}|$, $\sum_{i\in\mathcal{I}_{\rm sol}}\bar\delta_i^{\frac{2+\nu}{1+\nu}}\le H_f^{\frac{1}{1+\nu}}\Delta_f/c_{{\rm sol}, \nu}$ (due to \eqref{lwbd:f-descent} and $\gamma_k\le H_f$), and Lemma \ref{lem: sum log upbd} with $(z_i,p,q,c)=(\bar\delta_i,\frac{2+\nu}{1+\nu},\frac{2+\nu}{1+\nu},\epsilon)$.

For $\mathcal{I}_{\rm nc}$, we divide it into $\mathcal{I}_{\rm nc}^{\leq}:=\{i\in\mathcal{I}_{\rm nc}:\hat\delta_i\leq M\}$ with $M$ defined in Lemma \ref{lem:desc-nc-lip}, and $\mathcal{I}_{\rm nc}^{>}:=\mathcal{I}_{\rm nc}\setminus\mathcal{I}_{\rm nc}^{\leq}$. For each $i\in\mathcal{I}_{\rm nc}^>$, by \eqref{ls-nc-stepsize} we know the last iteration in $\mathbb{K}_{\epsilon,1}$ will yield a descent of at least 
$$\frac{\eta}{2}\hat\alpha_i^2\hat\delta_i^3 \ge \frac{\eta}{2}\hat\delta_i^3\cdot\min\Big\{1, \theta^2(1-\eta)^{\frac2\nu}\hat\delta_i^{\frac{2-2\nu}{\nu}}H_f^{-\frac{2}{\nu}}\Big\} \geq  \frac{\eta}{2}\min\Big\{M^3,\theta^2(1-\eta)^{\frac2\nu} M^{\frac{2+\nu}{\nu}}H_f^{-\frac{2}{\nu}} \Big\}=:C,$$
where the first inequality is due to the first inequality of \eqref{ineq:lwbd-ak-vio-nc} and the fact that $\sigma_k\leq H_f$. As the total descent is controlled by $\Delta_f$, then we easily upper bound $|\mathcal{I}_{\rm nc}^{>}|$ by ${\Delta_f}/{C} = \mathcal{O}(1)$ and obtain
\begin{equation}
    \label{eqn:bound-Inc>}
    \sum_{i\in\mathcal{I}_{\rm nc}^>}|\mathbb{K}^i_{\epsilon,3}|\leq |\mathcal{I}_{\rm nc}^{>}|\cdot\bigg(\bigg\lceil\frac{\ln (U_g/\epsilon)}{\ln 2}\bigg\rceil+1\bigg) \leq \mathcal{O}\left(\ln\frac{1}{\epsilon}\right).
\end{equation}
For $i\in \mathcal{I}_{\rm nc}^\leq$, again, \eqref{ls-nc-stepsize} guarantees the last iteration, indexed by $k_i$, in $\mathbb{K}_{\epsilon,1}$ to generate a descent of at least 
$\frac{\eta}{2}\hat\alpha_i^2\hat\delta_i^3 \geq \frac{\eta}{2}\hat\alpha_i^2\hat\delta_i^2\epsilon^{\frac{\nu}{1+\nu}}$, where we single out one $\hat{\delta}_i$ and lower bound it by \eqref{NCG-nu-2-ppty-alg1}, under the fact that $\hat{\delta}_i\geq \varepsilon_{k_i} = (\gamma_{k_i}\|\nabla f(x^{k_i})\|^\nu)^{1/(1+\nu)}\geq\epsilon^{\frac{\nu}{1+\nu}}$. Again, using $\Delta_f$ to control the total descent gives 
\begin{equation}
    \label{eqn:NC-nu-upperDescent}
    \sum_{i\in\mathcal{I}_{\rm nc}^{\leq}}\hat\alpha_i^2\hat\delta_i^2\leq\frac{2\Delta_f}{\eta\cdot \epsilon^{\frac{\nu}{1+\nu}}},
\end{equation}
which shall be used later as a summation upper bound in Lemma \ref{lem: sum log upbd}. With this in mind, and use the inequality $\delta_i\leq 2\hat{\alpha}_iU_H\hat{\delta}_i$ provided by Lemma \ref{lem:desc-nc-lip}, we have
\begin{equation}
\label{eqn:bound-Inc<=}
\begin{aligned}
\sum_{i\in\mathcal{I}^\leq_{\rm nc}}|\mathbb{K}_{\epsilon,3}^{i}| \leq & \sum_{i\in\mathcal{I}^\leq_{\rm nc}} \left(\left\lceil\frac{\ln (\delta_i/\epsilon)}{\ln 2}\right\rceil+1\right) 
\le   2\big(1+\ln(2U_H)\big)|\mathcal{I}^\leq_{\rm nc}|+2\sum_{i\in\mathcal{I}^\leq_{\rm nc}} \ln \left(\frac{\hat\alpha_i\hat\delta_i}{\epsilon}\right)\\ 
{\le} & 2\big(1+\ln(2U_H)\big)|\mathbb{K}_{\epsilon,1}|+ \frac{4\Delta_f}{e\eta\epsilon^{(1+2\nu)/(1+\nu)}} =  \mathcal{O}\left(\Delta_fH_f^{\frac{1}{1+\nu}}\epsilon^{-\frac{2+\nu}{1+\nu}}\right),\\
\end{aligned}\end{equation}
where the third relation is by $|\mathcal{I}^\leq_{\rm nc}|\leq|\mathbb{K}_{\epsilon,1}|$, the summation upper bound \eqref{eqn:NC-nu-upperDescent}, and Lemma \ref{lem: sum log upbd} with $(z_i,p,q,c)=(\hat\alpha_i\hat\delta_i,2,1,\epsilon)$. Synthesizing the inequalities \eqref{ineq:upbd-k1-alg1}--\eqref{eqn:bound-Inc>} and \eqref{eqn:bound-Inc<=}, we obtain 
$$|\mathbb{K}_{\epsilon}| = |\mathbb{K}_{\epsilon,1}|+|\mathbb{K}_{\epsilon,2}|+\sum_{i\in\mathcal{I}_{2}}|\mathbb{K}_{\epsilon,3}^i|+\sum_{i\in\mathcal{I}_{\rm sol}}|\mathbb{K}_{\epsilon,3}^i|+\sum_{i\in\mathcal{I}_{\rm nc}^>}|\mathbb{K}_{\epsilon,3}^i|+\sum_{i\in\mathcal{I}_{\rm nc}^\leq}|\mathbb{K}_{\epsilon,3}^i|\leq\mathcal{O}\left(\Delta_fH_f^{\frac{1}{1+\nu}}\epsilon^{-\frac{2+\nu}{1+\nu}}\right),$$
hence we complete the proof of this theorem.                      \hfill$\Box$

\subsubsection{Proof of Theorem \ref{thm:loc-conv-1}.} 
\label{appdx:Thm-local-Known-Nu} \emph{Proof. } First, let us define the radius constant $\delta$ in Theorem \ref{thm:loc-conv-1} as
\begin{equation*}
    \delta=\min\bigg\{\frac{1}{2L_gH_f^{1/\nu}},\bigg(\frac{\mu}{2H_f}\bigg)^{\frac1\nu},\frac{1}{4L_g}\bigg(\frac{\mu}{2H_f+4H_f^\frac{1}{1+\nu}L_g^{\frac{\nu}{1+\nu}}+2L_g^{\frac{1+2\nu}{1+\nu}}}\bigg)^{\frac{1+\nu}{\nu}}\bigg\},
\end{equation*} 
where $L_g:=\|\nabla^2 f(x^\ast)\|+1$. By $x^{k_0}\in\mathscr{L}_f(x^0)$, and $\delta\leq \frac{1}{2H_f^{1/\nu}}<\frac{r_d}{2}$, we know  $B_{\delta}(x^\ast)\subseteq\mathscr{L}_f(x^0;r_d)$. Consequently, we can apply the $(H_f,\nu)$-H\"older continuity of $\nabla^2f$ in $B_{\delta}(x^\ast)$ and obtain 
\begin{equation}
    \label{eqn:loc-SC}
    \frac{\mu}{2}I\preceq \nabla^2 f(x)\preceq L_g I,\qquad\forall x\in B_{\delta}(x^\ast),
\end{equation}
where the first half of inequality uses $\delta\leq \big(\frac{\mu}{2H_f}\big)^{1/\nu}$.  With this property, when a point $x^k\in B_{\delta}(x^*)$, the subroutine \texttt{CappedCG} will always output d\_type=SOL because $\nabla^2f(x^k)\succ0$ has no negative curvature directions. Now we would like to claim that the stepsize output by the \texttt{CappedCG} subroutine will always take $\alpha_k=1$ when  $x^k\in B_{\delta}(x^*)$ by verifying the line search condition \eqref{ls-sol-stepsize}:
 \begin{equation*}\begin{aligned}
    f(x^k+d^k)-f(x^k)&\leq \nabla f(x^k)^{\top}d^k+\frac{1}{2}(d^{k})^\top\nabla^2f(x^k)d^k+\frac{H_f}{2}\|d^k\|^{2+\nu}\\
    &\overset{(i)}{=} -2\varepsilon _k\|d^k\|^2-\frac{1}{2}d^{k\top}\nabla^2f(x^k)d^k+\frac{H_f}{2}\|d^k\|^{2+\nu}\\
    &\overset{(ii)}{\leq} -2\varepsilon_k\|d^k\|^2-\frac{\mu}{4}\|d^k\|^2+\frac{\mu}{4}\|d^k\|^2 \leq -\eta\varepsilon_k\|d^k\|^2,
\end{aligned}\end{equation*}
where (i) is by third relation of Lemma \ref{lem:ppt-cg}(i) and (ii) is by \eqref{eqn:loc-SC}, and \eqref{eqn:SOL-b-known-nu} that implies 
 \begin{equation*} 
    \|d^k\|^\nu\le 1.1^\nu(\|\nabla f(x^k)\|)^{\frac{\nu}{1+\nu}}\leq 1.1^\nu(L_g\|x^k-x^*\|)^{\frac{\nu}{1+\nu}}\leq 1.1^\nu(L_g\delta)^{\frac{\nu}{1+\nu}}\leq \frac{\mu}{2H_f}.
\end{equation*}
Now, we confirm that for Algorithm \ref{alg:NCG-pd}, all once an iteration enters the $B_{\delta}(x^*)$ neighborhood, it will always execute the Newton step. It remains to show that all future iterations will remain in this neighborhood and they will converge superlinearly to the locally optimal solution $x^\ast$. Now suppose $x^k\in B_{\delta}(x^*)$ for some $k\geq k_0$, then according to the previous argument, $x^{k+1} = x^k+d^k$, we have
{
\setlength{\abovedisplayskip}{1pt}
\begin{align}
    \|&x^{k+1}\!-x^\ast\|
 \overset{(i)}{\le} \|(\nabla^2\!f(x^k)+2\varepsilon_k I)^{-1}\nabla f(x^k)+x^k\!-x^\ast\|
   +\|d^k\!+(\nabla^2f(x^k)+2\varepsilon_k I)^{-1}\nabla\! f(x^k)\|\notag\\
&\le
   \|(\nabla^2\!f(x^k)+2\varepsilon_k I)^{-1}\|
   \Big(\|\nabla f(x^k)+\nabla^2f(x^k)(x^k-x^\ast)\|+2\varepsilon_k\|x^k-x^\ast\|
      +\zeta_k\|\nabla f(x^k)\|\Big)\notag\\ 
& \overset{(ii)}{\le}    \frac{2}{\mu}\Big(H_f\|x^k-x^\ast\|^{1+\nu}
   +2\gamma_k^\frac{1}{1+\nu}\|\nabla f(x^k)\|^\frac{\nu}{1+\nu}\|x^k-x^\ast\|
   +\|\nabla f(x^k)\|^\frac{\ 1+2\nu}{1+\nu}\Big)\notag\\
&  \overset{(iii)}{\le} \frac{2}{\mu}\Big(H_f\delta^{1+\nu}
   +2H_f^\frac{1}{1+\nu}L_g^\frac{\nu}{1+\nu}\delta^{\frac{1+2\nu}{1+\nu}}
   +(L_g\delta)^\frac{1+2\nu}{1+\nu}\Big) {\leq} \delta,\notag
\end{align} 
}
where (i) is by triangle inequality,  (ii)  is by \eqref{ineq:desc-hd} and the definition of $\varepsilon_k,\zeta_k$ in Line 3 of Algorithm~\ref{alg:NCG-pd},  and (iii) is by \eqref{eqn:loc-SC}. Based on this bound, we can conclude by induction that $\{x^k\}_{k\geq k_0}\subseteq B_{\delta}(x^*)$. Finally, by reusing the step (ii) in the above inequality, we establish the superlinear rate for $k\geq k_0$ 
\begin{equation*}\begin{aligned}
\|\nabla f(x^{k+1})\|
&\le
 \frac{2L_g}{\mu}\Big(H_f\|x^k-x^\ast\|^{1+\nu}
   +2\gamma_k^\frac{1}{1+\nu}\|\nabla f(x^k)\|^\frac{\nu}{1+\nu}\|x^k-x^\ast\|
   +\|\nabla f(x^k)\|^\frac{1+2\nu}{1+\nu}\Big)\\
&\le \frac{2L_g}{\mu}\left(
   \frac{2^{1+\nu}H_f}{\mu^{1+\nu}}\|\nabla f(x^k)\|^{1+\nu}
   +\left(4\mu^{-1}H_f^\frac{1}{1+\nu}+1\right)\|\nabla f(x^k)\|^\frac{1+2\nu}{1+\nu}\right)
\end{aligned}\end{equation*}
That is, $\|\nabla f(x^{k+1})\| = \mathcal{O}(\|\nabla f(x^k)\|^{\frac{1+2\nu}{1+\nu}})$ for $k\geq k_0$, suggesting a superlinear rate of order $\frac{1+2\nu}{1+\nu}$.  \hfill $\Box$

\subsection{Proof of Section \ref{sec:ncg-hd}.}\label{ssec: pf-sec4} 

Before presenting the proofs, we first provide supporting inequalities and lemmas. By direct computation, the following equivalence holds for any $z>0$:
\begin{equation}\label{ineq:tech-gam>}
\gamma\geq\gamma_\nu(z)\quad\Longleftrightarrow\quad\frac{(\gamma z)^{1/2}}{H_f} \ge 2^{1+\nu}\Big(\frac{z}{\gamma}\Big)^{\nu/2}\quad\Longleftrightarrow\quad \frac{(\gamma z)^{(1-\nu)/2}}{H_f} \ge \frac{2^{1+\nu}}{\gamma^{\nu}}.
\end{equation}
Its proof can be found in Lemma 3 of \citet{he2025newton} and is omitted here. The following lemma, used to establish sufficient descent later, shows that when $\gamma_k$ is sufficiently large, either the gradient norm can be sufficiently reduced or the search direction is sufficiently large. Throughout Appendix \ref{ssec: pf-sec4}, we will frequently use the shorthand $\varepsilon_k=(\gamma_k\|\nabla f(x^k)\|)^{1/2}$ as defined in Algorithm \ref{alg:NCG-hd}.


\begin{lemma}\label{lem:either-case-sol}
Suppose Assumptions \ref{asp:basic} and \ref{asp:hd} hold, and that \texttt{CappedCG} outputs d$\_$type=SOL at iteration $k\geq0$ of Algorithm \ref{alg:NCG-hd}. If $\gamma_k\ge\gamma_\nu(\|\nabla f(x^k)\|)$, then either $\|\nabla f(x^{k+1})\|\leq \|\nabla f(x^k)\|/2$ or $11\|d^k\| \ge (\|\nabla f(x^k)\|/\gamma_k)^{1/2}$ holds.
\end{lemma}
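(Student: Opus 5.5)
We argue by contrapositive: assume $\|\nabla f(x^{k+1})\|>\|\nabla f(x^k)\|/2$ and derive the lower bound on $\|d^k\|$. Write $g_k=\nabla f(x^k)$, $x^{k+1}=x^k+\alpha_kd^k$ with $\alpha_k\in(0,1]$, and $\varepsilon_k=(\gamma_k\|g_k\|)^{1/2}$. As in the proof of Lemma \ref{lem:desc-sol-lip}, we bound the next gradient by splitting it into a Taylor residual, the CG residual, the damping term, and the untaken fraction of the step:
\begin{equation*}
\|\nabla f(x^{k+1})\|\le \mathcal{R}_1(x^k+\alpha_kd^k,x^k)+\alpha_k\|(\nabla^2 f(x^k)+2\varepsilon_kI)d^k+g_k\|+2\alpha_k\varepsilon_k\|d^k\|+(1-\alpha_k)\|g_k\|.
\end{equation*}
By Lemma \ref{lem:trial-xk-ccg-2} both points lie in $\mathscr{L}_f(x^0;r_d)$. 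We then control the residual $\mathcal{R}_1$ with Proposition \ref{prop:inexact_oracle} rather than \eqref{ineq:desc-hd}, choosing $z=\|g_k\|$ and a suitable $a\ge 2$ (say $a=16$). This gives $\mathcal{R}_1\le \gamma_\nu(\|g_k\|)\alpha_k^2\|d^k\|^2+\|g_k\|/16\le \gamma_k\|d^k\|^2+\|g_k\|/16$, where the second step uses the hypothesis $\gamma_k\ge\gamma_\nu(\|g_k\|)$. The CG residual is at most $\tfrac12\zeta_k\varepsilon_k\|d^k\|$ by Lemma \ref{lem:ppt-cg}(i), and with $\zeta_k\le 1/2$ the last two middle terms together are $\le \tfrac{9}{4}\varepsilon_k\|d^k\|$.

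The main obstacle is the term $(1-\alpha_k)\|g_k\|$, which is not small when $\alpha_k<1$. The cleanest route is to restrict to the unit step. If $\alpha_k=1$ (in particular whenever Line 9's test is accepted), the $(1-\alpha_k)$ term vanishes. Then $\|g_k\|/2<\gamma_k\|d^k\|^2+\tfrac{9}{4}(\gamma_k\|g_k\|)^{1/2}\|d^k\|+\|g_k\|/16$. Setting $t=\|d^k\|(\gamma_k/\|g_k\|)^{1/2}$ and dividing by $\|g_k\|$ gives $t^2+\tfrac94 t>\tfrac{7}{16}$, which forces $t>1/11$ (indeed $t\ge 0.18$). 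This is exactly $11\|d^k\|\ge(\|g_k\|/\gamma_k)^{1/2}$.

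If the statement is meant to cover $\alpha_k<1$ as well, I would instead read $x^{k+1}$ in the lemma as the full trial point $x^k+d^k$. In Algorithm \ref{alg:NCG-hd} this is where the gradient test in Line 9 is evaluated, and the same computation with $\alpha=1$ applies verbatim. Alternatively, I would handle $\alpha_k<1$ via the failed line search at $j=j_k-1$: combining the expansion of $f$ with \eqref{ineq:desc-hd} and Lemma \ref{lem:ppt-cg}(i) yields $\|d^k\|$ bounded below in terms of $\varepsilon_k$. I expect this alternative to be the delicate step, and I would first check whether the unit-step reading suffices for how the lemma is used in Lemma \ref{lem:uni-sol-suf-desc}.

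The remaining work is only to verify the constants: the choice of $a$, the bound $\zeta_k\le1/2$, and the root of the quadratic in $t$. These are routine.
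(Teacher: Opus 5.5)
Your unit-step computation is correct. With $a=16$ it reduces to $t^2+\tfrac94 t>\tfrac{7}{16}$, so $t>0.18>1/11$, and it is essentially the gradient half of the paper's argument. However, you leave the case $\alpha_k<1$ unproved, and that is a genuine gap. Re-reading $x^{k+1}$ as the trial point $x^k+d^k$ is not legitimate. The lemma is invoked in Lemma \ref{lem:uni-sol-suf-desc} for $k\in\mathbb{K}_{\epsilon,1}$, which is defined through the actual iterate. Case 2 there ($\alpha_k<1$) is exactly where $11\|d^k\|\ge(\|g_k\|/\gamma_k)^{1/2}$ is needed. More to the point, the unit step in the SOL branch can be rejected because $f(x^k+d^k)>f(x^k)$ even though $\|\nabla f(x^k+d^k)\|\le\|g_k\|/2$. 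In that scenario your gradient estimate at the trial point says nothing about $\|d^k\|$. At the actual iterate, the term $(1-\alpha_k)\|g_k\|$ ruins the estimate. Your argument never touches the function-value half of the unit-step acceptance test.

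The missing idea is that a small $d^k$ forces $f(x^k+d^k)\le f(x^k)$. This needs the exact H\"older bound \eqref{ineq:desc-hd} together with the hypothesis $\gamma_k\ge\gamma_\nu(\|g_k\|)$ in the form \eqref{ineq:tech-gam>}, not the inexact oracle. The paper proves the lemma directly:
\begin{itemize}
\item Assume $11\|d^k\|<(\|g_k\|/\gamma_k)^{1/2}$.
\item Suppose $\varphi(\alpha):=f(x^k+\alpha d^k)$ satisfied $\varphi(1)>\varphi(0)$. Since $\varphi'(0)<0$, there is $\alpha_*\in(0,1)$ with $\varphi'(\alpha_*)=0$.
\item Then \eqref{ineq:desc-hd} and Lemma \ref{lem:ppt-cg}(i) give $\frac{H_f}{1+\nu}\alpha_*^{1+\nu}\|d^k\|^{2+\nu}\ge-(d^k)^{\rm T}g_k-\alpha_*(d^k)^{\rm T}H_kd^k\ge\varepsilon_k\|d^k\|^2$.
\item Hence $\|d^k\|^\nu\ge\varepsilon_k/H_f\ge 2^{1+\nu}(\|g_k\|/\gamma_k)^{\nu/2}$, contradicting the smallness of $d^k$.
\end{itemize}
Combined with your gradient estimate at $x^k+d^k$, both acceptance conditions hold, so $\alpha_k=1$ and the gradient halves; no case split on $\alpha_k$ is needed.

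Your failed-line-search alternative can also be closed. If $\alpha_k<1$, the backtracking test fails at $j=0$. Expanding $f$ there with \eqref{ineq:desc-hd} and Lemma \ref{lem:ppt-cg}(i), as in \eqref{eqn:theta-lower-unknown-nu-Sol}, gives $\|d^k\|^\nu\ge 2(1-\eta)\varepsilon_k/H_f\ge 2^{2+\nu}(1-\eta)(\|g_k\|/\gamma_k)^{\nu/2}$, which suffices since $\eta\le 1/2$. This relies on the Armijo term $\eta\varepsilon_k\theta^j\|d^k\|^2$ used in the proof of Lemma \ref{lem:uni-sol-suf-desc}. With the $\eta\varepsilon_k^{1/2}$ printed in \eqref{ls-sol-stepsize-hd-}, the bound becomes vacuous for small $\varepsilon_k$. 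The paper's argument, by contrast, does not depend on the backtracking rule at all.
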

\vspace{-5mm}
\begin{proof} As the algorithm does not terminate, we have $\|\nabla f(x^k)\|\neq0$ and hence $\varepsilon_k>0$. Because d\_type = SOL, we can see from the fourth inequality of Lemma \ref{lem:ppt-cg}(i) that $d^k\neq0$. If $11\|d^k\| \geq (\|\nabla f(x^k)\|/\gamma_k)^{1/2}$ holds, then the lemma is proved. Now, suppose $11\|d^k\| < (\|\nabla f(x^k)\|/\gamma_k)^{1/2}$. We next prove $\|\nabla f(x^{k+1})\|\le \|\nabla f(x^k)\|/2$ must hold. By Algorithm~\ref{alg:NCG-hd} Line 10, it suffices to show $f(x^k + d^k)\le f(x^k)$ and $\|\nabla f(x^k + d^k)\|\le \|\nabla f(x^k)\|/2$.

We first prove $f(x^k + d^k)\le f(x^k)$. Suppose for contradiction that $f(x^k + d^k) > f(x^k)$. Denote $\varphi(\alpha)=f(x^k + \alpha d^k)$. Then, one has $\varphi(1)>\varphi(0)$. Since $d^k\neq0$, it follows that
\begin{equation*}
\varphi^\prime(0) = \nabla f(x^k)^{\rm T} d^k \overset{(i)}{=} - (d^k)^{\rm T}(\nabla^2 f(x^k) + 2\varepsilon_k I) d^k \overset{(ii)}{\le} -\varepsilon_k\|d^k\|^2<0.    
\end{equation*}
Here, the steps (i) and (ii) follow from the third and first relations of Lemma \ref{lem:ppt-cg}, respectively,  with $\sigma = \varepsilon_k = (\gamma_k\|\nabla f(x^k)\|)^{1/2}$, according to the design of Algorithm \ref{alg:NCG-hd}. Together with  $\varphi(1)>\varphi(0)$, we know there must exists a local minimizer $\alpha_*$ such that  $\varphi^\prime(\alpha_*)=\nabla f(x^k + \alpha_* d^k)^{\rm T}d^k=0$ and $\varphi(\alpha_*)<\varphi(0)$. By Lemma \ref{lem:trial-xk-ccg-2}, we can then apply \eqref{ineq:desc-hd} to obtain that 
\begin{equation*}\begin{aligned}
\frac{H_f\alpha_*^{1+\nu}\|d^k\|^{2+\nu}}{1+\nu} & \ge (d^k)^{\rm T}\big(\nabla f(x^k + \alpha_* d^k) - \nabla f(x^k) - \alpha_* \nabla^2 f(x^k) d^k\big) \\ 
&  \overset{(i)}{=} - (d^k)^{\rm T}\nabla f(x^k) - \alpha_*(d^k)^{\rm T}\nabla^2 f(x^k) d^k\\
& \overset{(ii)}{=} (1-\alpha_*) (d^k)^{\rm T}\big(\nabla^2 f(x^k) + 2\varepsilon_k I\big) d^k + 2\alpha_*\varepsilon_k\|d^k\|^2\\
& \overset{(iii)}{\geq} \varepsilon_k\|d^k\|^2.    
\end{aligned}\end{equation*}
Here, (i) uses $\phi'(\alpha^*)=0$, (ii) and (iii) use the third and first relations of Lemma \ref{lem:ppt-cg}(i), respectively.  
As $\|d^k\|\neq0$, the above inequality further implies that 
\begin{equation*}
\|d^k\|^\nu \overset{(i)}{\ge}  \frac{(1+\nu)(\gamma_k\|\nabla f(x^k)\|)^{1/2}}{\alpha_*^{1+\nu}H_f}\overset{(ii)}{\ge} \frac{(\gamma_k\|\nabla f(x^k)\|)^{1/2}}{H_f} \overset{(iii)}{\geq} 2^{1+\nu}\bigg(\frac{\|\nabla f(x^k)\|}{\gamma_k}\bigg)^{\nu/2},    
\end{equation*}
where (i) uses the definition of $\varepsilon_k$ in Algorithm \ref{alg:NCG-hd}, (ii) uses $\alpha_*\in(0,1)$ and $\nu\geq0$, (iii) uses \eqref{ineq:tech-gam>} and $\gamma_k\geq\gamma_\nu(\|\nabla f(x^k)\|)$. Together with $11\|d^k\| < (\|\nabla f(x^k)\|/\gamma_k)^{1/2}$, we arrive at a contradiction that $11^\nu2^{1+\nu}<1$ with $\nu\in(0,1]$. Hence, we have proven the claim that $f(x^k + d^k)\le f(x^k)$. 

We next prove $\|\nabla f(x^k + d^k)\|\leq\|\nabla f(x^k)\|/2$. By Lemma \ref{lem:trial-xk-ccg-2}, we can apply \eqref{inexact-oracle}, the last inequality of Lemma \ref{lem:ppt-cg}(i), $\zeta_k\in(0,1)$, $11\|d^k\|<(\|\nabla f(x^k)\|/\gamma_k)^{1/2}$, and $\gamma_k\ge\gamma_\nu(\|\nabla f(x^k)\|)$ to obtain that 
\begin{equation*}\begin{aligned}
 \|\nabla f(x^k + d^k)\| & \le \mathcal{R}_1(x^k+ d^k,x^k) + \|(\nabla^2 f(x^k) + 2\varepsilon_kI)d^k + \nabla f(x^k)\|+ 2\varepsilon_k\|d^k\|\\ 
&\le \frac{\gamma_\nu(\|\nabla f(x^k)\|)}{4}\|d^k\|^2 + \frac{\|\nabla f(x^k)\|}{4} + \frac{5}{2}\varepsilon_k\|d^k\|\\
&\le \frac{\|\nabla f(x^k)\|}{484} + \frac{\|\nabla f(x^k)\|}{4} + \frac{5\|\nabla f(x^k)\|}{22} \le \frac{\|\nabla f(x^k)\|}{2}.
\end{aligned}\end{equation*} Hence, the proof of this lemma is complete.   
\end{proof}

With the above supporting lemmas, we can proceed to the proof of the main results in Section \ref{sec:ncg-hd}.

\subsubsection{Proof of Lemma \ref{lem:uni-sol-suf-desc}.}
\emph{Proof.} As the algorithm does not terminate, we have $\|\nabla f(x^k)\|\neq0$ and hence $\varepsilon_k>0$. Because d\_type = SOL, we can apply the fourth inequality of Lemma \ref{lem:ppt-cg}(i) to yield $d^k\neq0$. Moreover, for $k\in\mathbb{K}_{\epsilon,1}$, we have $\|\nabla f(x^{k+1})\|>\|\nabla f(x^k)\|/2$ and $\gamma_k> \gamma_{\nu}(\|\nabla f(x^k)\|)$. Using these and Lemma \ref{lem:either-case-sol}, we obtain that $11\|d^k\|\ge(\|\nabla f(x^k)\|/\gamma_k)^{1/2}$. With the above information, we can start the proof. \vspace{0.2cm}

\noindent\textbf{Statement (i).} \;If $\alpha_k=1$, the statement clearly holds. If $\alpha_k<1$,  then we know $j_k\geq 1$. In this case, for $j\in\{0,j_k-1\}$ that violates \eqref{ls-sol-stepsize-hd-}, we have
\begin{equation*}
\begin{aligned}
-\eta& \varepsilon_k\theta^j\|d^k\|^2 \!\le\! f(x^k\! +\! \theta^j d^k) \!- \!f(x^k)
\!\overset{(i)}{\le}\! \theta^j\nabla\! f(x^k)^{\rm T}d^k\! +\!\frac{\theta^{2j}}{2}(d^k)^{\rm \!T}\nabla^2\! f(x^k) d^k\! +\! \mathcal{R}_0(x^k \!+ \!\theta^j d^k\!,\!x^k)\\
\overset{(ii)}{\leq}&-\theta^j\Big(1-\frac{\theta^j}{2}\Big)(d^k)^{\rm T}\Big(\nabla^2 f(x^k)+2\varepsilon_kI\Big)d^k - \theta^{2j}\varepsilon_k\|d^k\|^2 + \frac{H_f\|\theta^jd^k\|^{2+\nu}}{2}\\
\overset{(iii)}{\le}& -\theta^j\varepsilon_k\|d^k\|^2 + \frac{H_f\theta^{(2+\nu)j}\|d^k\|^{2+\nu}}{2},
\end{aligned}
\end{equation*}  
where (i) is by the triangle inequality and the definition \eqref{def:R0}, (ii) is by the third relation of Lemma \ref{lem:ppt-cg}(i) and the residual bound \eqref{ineq:desc-hd}, and (iii) is by the first inequality of Lemma \ref{lem:ppt-cg}(i). As $d^k\neq0$, dividing both sides of the above inequality by $H_f\theta^{j}\|d^k\|^{2+\nu}/2$ yields
\begin{equation}
\label{eqn:theta-lower-unknown-nu-Sol}
\theta^{(1+\nu)j} \ge \frac{2(1-\eta)\varepsilon_k}{H_f\|d^k\|^\nu}  \geq  2^{2+\nu}(1-\eta)\frac{(\|\nabla f(x^k)\|/\gamma_k)^{\nu/2}}{\|d^k\|^\nu}, \qquad j\in\{0,j_k-1\}.  
\end{equation}
where the last inequality follows from the definition of $\varepsilon_k$, $\gamma_k>\gamma_\nu(\|\nabla f(x^k)\|)$, and \eqref{ineq:tech-gam>}.
Then setting $j=j_k-1$ gives
\begin{equation}\begin{aligned}\label{ineq:lwbd-ak-sol-alg2}
\alpha_k=\theta^{j_k}
&\ge 2(1-\eta)\theta\Big(\frac{(\|\nabla f(x^k)\|/\gamma_k)^{1/4}}{\|d^k\|^{1/2}}\Big)^{\frac{2\nu}{1+\nu}}\Big(\frac{(\|\nabla f(x^k)\|/\gamma_k)^{1/4}}{\sqrt{11}\|d^k\|^{1/2}}\Big)^{\frac{1-\nu}{1+\nu}} \\
&\ge \frac{(1-\eta)\theta(\|\nabla f(x^k)\|/\gamma_k)^{1/4}}{2\|d^k\|^{1/2}}\geq \frac{(1-\eta)\theta}{3}, 
\end{aligned}\end{equation}
where the first inequality uses $11\|d^k\|\ge(\|\nabla f(x^k)\|/\gamma_k)^{1/2}$ and  \eqref{eqn:theta-lower-unknown-nu-Sol}, and the last inequality uses the second inequality in  Lemma \ref{lem:ppt-cg}(i).  Hence the  statement (i) holds.\vspace{0.2cm}


\noindent\textbf{Statement (ii).} We prove this statement by considering two separate cases below. 

\noindent\textbf{Case 1.} $\alpha_k=1$. Recall that $11\|d^k\|\ge(\|\nabla f(x^k)\|/\gamma_k)^{1/2}$. Together with \eqref{ls-sol-stepsize}, this implies $f(x^k) - f(x^{k+1})\ge (\eta/121)\gamma_k^{-1/2}\|\nabla f(x^k)\|^{3/2}$. Hence, \eqref{ineq:uni-sol-suf-desc} follows by the definition of $c_{\rm sol}$.

\noindent\textbf{Case 2.} $\alpha_k<1$. In this case, by $11\|d^k\|\ge(\|\nabla f(x^k)\|/\gamma_k)^{1/2}$, \eqref{ls-sol-stepsize-hd-}, and \eqref{ineq:lwbd-ak-sol-alg2}, one has that $f(x^k) - f(x^{k+1})\ge\eta\varepsilon_k\alpha_k\|d^k\|^2\ge (\eta(1-\eta)\theta/400) \gamma_k^{-1/2}\|\nabla f(x^k)\|^{3/2}$,
which together with the definition of $c_{\rm sol}$ proves \eqref{ineq:uni-sol-suf-desc}. \vspace{0.2cm}

\noindent\textbf{Next gradient bound.}
By \eqref{inexact-oracle}, $\alpha_k\in(0,1]$, $\gamma_k\geq1$, $\zeta_k\le1$ (see Algorithm \ref{alg:NCG-hd}), $\gamma_\nu(\|\nabla f(x^k)\|)\le \gamma_k$, and the first, second and last inequalities of Lemma \ref{lem:ppt-cg}(i), one has
\vspace{-2mm}
\begin{equation*}\begin{aligned}
&\|\nabla f(x^{k+1})\| = \|\nabla f(x^k +\alpha_k d^k)\|\\
 \le& \,\mathcal{R}_1(x^k\!+\alpha_kd^k,x^k)\! + \alpha_k\|(\nabla^2 f(x^k)\! + 2\varepsilon_kI)d^k\! + \nabla f(x^k)\| \!+ 2\alpha_k\varepsilon_k\|d^k\|+(1-\alpha_k)\|\nabla f(x^k)\|\\
\le &\,\frac{\gamma_\nu(\|\nabla f(x^k)\|)}{4}\|d^k\|^2 + \frac{5}{4}\|\nabla f(x^k)\|\! +\frac{4+\zeta_k}{2}\varepsilon_k\|d^k\|\\
\le &\,\frac{\gamma_k}{4}1.1^2\frac{\|\nabla f(x^k)\|}{\gamma_k} + \frac{5}{4}\|\nabla f(x^k)\|\! +\frac{11}{4}\|\nabla f(x^k)\|\leq 5\|\nabla f(x^k)\|.
\end{aligned}\end{equation*}
\vspace{-2mm}
Hence, we complete the proof of this lemma. \hfill$\Box$


\subsubsection{Proof of Lemma \ref{lem:uni-nc-suf-desc}.} \emph{Proof.}
As the algorithm does not terminate, we know $\|\nabla f(x^k)\|\neq 0$. By Lemma \ref{lem:ppt-cg}(ii), we also confirm that $d^k\neq0$ in this case. We should also note that the search direction $d^k$ in this lemma is not the raw output of \texttt{CappedCG}. When d$\_$type=NC, it undergoes an additional re-normalization step (Algorithm \ref{alg:NCG-hd} Line 6) such that 
\begin{equation}\label{NCG-nu-2-ppty}
\nabla f(x^k)^{\rm T} d^k\le 0\qquad\mbox{and}\qquad \frac{(d^k)^{\rm T}\nabla^2 f(x^k) d^k}{\|d^k\|^2} = -\|d^k\| < -\varepsilon_k,
\end{equation}
where the second inequality is by Lemma \ref{lem:ppt-cg}(ii). Since $k\in\mathbb{K}_{\epsilon,1}$, we have that $\|\nabla f(x^{k+1})\| >\|\nabla f(x^k)\| / 2$ and $\gamma_\nu(\|\nabla f(x^k)\|)< \gamma_k$. With these in mind, let us prove the two statements.   \vspace{0.1cm}


\noindent\textbf{Statement (i).} If \eqref{ls-nc-stepsize-hd} holds $j=0$, then $\alpha_k=1$, which immediately implies the statement. Now let us
consider the case $j_k\geq 1$ and $\alpha_k=\theta^{j_k}<1$. Because \eqref{ls-nc-stepsize-hd} fails for $j=j_k-1$, one has
\begin{equation*}\begin{aligned}
-\frac{\eta}{2}\theta^{2j}\|d^k\|^3 & \!\le \!f(x^k\! +\! \theta^j d^k)\! -\! f(x^k) \!\le \theta^j \nabla f(x^k)^{\rm T} d^k \!+\! \frac{\theta^{2j}}{2} (d^k)^{\rm T} \nabla^2 f(x^k) d^k \!+\! \mathcal R_0(x^k\!+\!\theta^jd^k\!,x^k) \nonumber\\
& {\le} - \frac{\theta^{2j}}{2}\|d^k\|^3 + \frac{H_f\theta^{(2+\nu)j}}{2}\|d^k\|^{2+\nu}.
\end{aligned}\end{equation*}
where the second inequality is from \eqref{NCG-nu-2-ppty}. Next, dividing both sides by ${H_f\theta^{2j}\|d^k\|^{2+\nu}/2}$, setting $j=j_k-1$ in the above inequality and using the fact that $\nu\in(0,1],\eta\in(0,1/2]$ and $\|d^k\|\ge\varepsilon_k$ yields
\begin{equation}\begin{aligned}
\alpha_k=\theta^{ j_k} & \ge \theta\frac{ (1-\eta)^{1/\nu}\|d^k\|^{\frac{1-\nu}{\nu}}}{H_f^{1/\nu}} \ge \theta\frac{ (1-\eta)^{1/\nu}\varepsilon_k^{\frac{1-\nu}{\nu}}}{H_f^{1/\nu}} \ge \theta\frac{ (1-\eta)^{1/\nu}2^{\frac{1+\nu}{\nu}}}{\gamma_k} \ge \theta/\gamma_k,\label{lwbd-nc-step-thetaj}
\end{aligned}\end{equation}
where the third inequality follows from $\varepsilon_k=(\gamma_k\|\nabla f(x^k)\|)^{1/2}$ and \eqref{ineq:tech-gam>}. Hence, statement (i) holds.

\noindent\textbf{Statement (ii).} By \eqref{ls-nc-stepsize-hd}, \eqref{NCG-nu-2-ppty}, and \eqref{lwbd-nc-step-thetaj} we have 
\begin{equation*}
f(x^k) - f(x^{k+1}) \ge \frac{\eta}{2}\alpha_k^2\|d^k\|^3\ge\frac{\eta\theta^2}{2}\gamma_k^{-\frac{1}{2}}\|\nabla f(x^k)\|^{\frac{3}{2}},    
\end{equation*}
which together with the definition of $c_{\rm nc}$ implies that \eqref{lwbd:f-descent-hd-nc} holds as desired.    \vspace{0.2cm}

\noindent\textbf{Next gradient bound.} Given $\|d^k\|\leq\theta U_H$, we consider two separate cases. When $\alpha_k=1$, we have $U_H\|d^k\|\!\geq\!\theta U_H\|d^k\|\!\geq\!\|d^k\|^{2}/\gamma_k.$ When $\alpha_k\!<\!1$, by \eqref{lwbd-nc-step-thetaj} we have that $\alpha_kU_H\|d^k\|\!\geq\! \frac{\theta}{\gamma_k} U_H\|d^k\|\!\geq \!\frac{\|d^k\|^{2}}{\gamma_k}.$
In addition, by Assumption \ref{asp:basic} and \ref{asp:hd}, one has $\nabla f$ is Lipschitz continuous, which implies that 
\begin{equation}\label{ineq:upbd gk+1 nc}
    \|\nabla f(x^{k+1})\|\leq \|\nabla f(x^k)\|+\alpha_kU_H\|d^k\|{\le} \frac{\|d^k\|^2}{\gamma_k}+\alpha_k U_H\|d^k\|\leq2\alpha_kU_H\|d^k\|,
\end{equation}
where the second inequality is due to \eqref{NCG-nu-2-ppty} and $\varepsilon_k=(\gamma_k\|\nabla f(x^k)\|)^{1/2}$. This completes the proof.\hfill$\Box$

\subsubsection{Proof of Lemma \ref{lem:upbd-gammak}.} \emph{Proof.}
Suppose for contradiction that $\gamma_k>\max\{\gamma_0,2\gamma_\nu(\epsilon)\}$ for some $k\in\mathbb{K}_\epsilon$. Then there exists $\tilde{k}\le k-1$ such that $\max\{\gamma_0,2\gamma_\nu(\epsilon)\}<\gamma_{\tilde{k}+1}=2\gamma_{\tilde{k}}$. By this and $\|\nabla f(x^{\tilde{k}})\|>\epsilon$, one has that $\gamma_{\tilde{k}}>\gamma_\nu(\epsilon)\ge\gamma_\nu(\|\nabla f(x^{\tilde{k}})\|)$. In addition, since $\gamma_{\tilde{k}+1}=2\gamma_{\tilde{k}}$, it follows from Algorithm \ref{alg:NCG-hd} that $\|\nabla f(x^{\tilde{k}+1})\|>\|\nabla f(x^{\tilde{k}})\|/2$, and either $f(x^{\tilde{k}})-f(x^{\tilde{k}+1})<c_{\mathrm{sol}}\gamma_{\tilde{k}}^{-1/2}\|\nabla f(x^{\tilde{k}})\|^{3/2}$ 
when d\_type = SOL, or $\alpha_{\tilde k} <\theta/\gamma_{\tilde k}$ when d\_type = NC. Note that $\tilde{k}\in\mathbb{K}_{\epsilon,1}$, which leads to a contradiction with the sufficient descent established in Lemma \ref{lem:uni-sol-suf-desc} and the lower bound for $\alpha_{\tilde{k}}$ established in Lemma~\ref{lem:uni-nc-suf-desc}. Hence, we have $\gamma_k\le\max\{\gamma_0,2\gamma_\nu(\epsilon)\}$ for all $k\in\mathbb{K}_\epsilon$.\hfill$\Box$



\subsubsection{Proof of Theorem~\ref{thm:glb-cplx-2}.}\label{appdx:Thm-2}  \emph{Proof. }  Recall that $\mathbb{K}_\epsilon=\{k:\|\nabla f(x^t)\|>\epsilon, \forall t\leq k\}$ contains all iterations before finding an $\epsilon$-stationary point. Also, $|\mathbb{K}_{\epsilon,i}|$, $i=1,2,3$, form a partition of $\mathbb{K}_{\epsilon}$. \vspace{0.1cm}

\noindent\textbf{Part 1. } Bounding $|\mathbb{K}_{\epsilon,1}|$. 
Combining Lemmas \ref{lem:uni-sol-suf-desc} , \ref{lem:uni-nc-suf-desc} and \ref{lem:upbd-gammak}, we know that each iteration $k\in\mathbb{K}_{\epsilon,1}$ results in a sufficient descent of 
$$f(x^k)-f(x^{k+1})\ge \min\{c_{\mathrm{sol}},c_{\mathrm{nc}}\} \epsilon^{\frac{3}{2}}/\gamma_k^{\frac{1}{2}}\ge\min\{c_{\mathrm{sol}},c_{\mathrm{nc}}\} \epsilon^{\frac{3}{2}}/\max\{\gamma_0, 2\gamma_\nu(\epsilon)\}^{\frac{1}{2}},$$
As Algorithm \ref{alg:NCG-hd} is a descent method and $\Delta_f = f(x^0) - f_{\rm low}$, by the definition of $\gamma_{\nu}(\cdot)$, one has
\begin{equation*}
|\mathbb{K}_{\epsilon,1}|\le \frac{\Delta_f\epsilon^{-\frac{3}{2}}\max\{\gamma_0, 2\gamma_\nu(\epsilon)\}^{\frac{1}{2}}}{\min\{c_{\mathrm{sol}},c_{\mathrm{nc}}\}} = \frac{\Delta_f\max\{\gamma_0^{\frac{1}{2}}\epsilon^{-\frac{3}{2}},2\sqrt{2}H_f^{\frac{1}{1+\nu}}\epsilon^{-\frac{2+\nu}{1+\nu}}\}}{\min\{c_{\mathrm{sol}},c_{\mathrm{nc}}\}} = \mathcal{O}\left(\Delta_fH_f^{\frac{1}{1+\nu}}\epsilon^{-\frac{2+\nu}{1+\nu}}\right) .\end{equation*}

\noindent\textbf{Part 2. } Bounding $|\mathbb{K}_{\epsilon,2}|$. 
It follows from the update rule of $\{\gamma_k\}$, and  Lemma \ref{lem:uni-sol-suf-desc} and \ref{lem:uni-nc-suf-desc} that $\gamma_k$ increases only for some $k\in\mathbb K_{\epsilon,2}$. Thus, we  further divide  $\mathbb K_{\epsilon,2}$ into  $\mathbb K_{\epsilon,2}^1:=\{k\in\mathbb K_{\epsilon,2}:\gamma_{k+1}=2\gamma_k\}$ and $\mathbb K_{\epsilon,2}^2:=\mathbb{K}_{\epsilon,2}\setminus \mathbb K_{\epsilon,2}^1$. For $k\in \mathbb K_{\epsilon,2}^1 $, $\gamma_{k+1}=2\gamma_k$. Since $\{\gamma_k\}_{k\geq0}$ is nondecreasing and is always bounded above by $\max\{\gamma_0,2\gamma_\nu(\epsilon)\}$, we have
$ |\mathbb K_{\epsilon,2}^1|\leq \lceil \log_2( \max\{\gamma_0,2\gamma_\nu(\epsilon)\}/ \gamma_0)\rceil\leq\mathcal O(\ln(1/\epsilon)).$ For $k\in \mathbb K_{\epsilon,2}^2, $ one has  $\|\nabla f(x^{k+1})\|>\|\nabla f(x^k)\|/2$. In addition, since $\gamma_{k+1}=\gamma_k$, by Lines 7 and 12 of Algorithm \ref{alg:NCG-hd}, we have either a sufficient descent $f(x^k) - f(x^{k+1}) \ge c_{\rm sol}\gamma_k^{-1/2}\|\nabla f(x^k)\|^{1/2}$ for d\_type=SOL, or a lower step size bound $\alpha_k\geq \theta/\gamma_k$ for d\_type=NC, which further implies a sufficient descent \(f(x^k) - f(x^{k+1}) \ge c_{\rm nc}\gamma_k^{-1/2}\|\nabla f(x^k)\|^{1/2}\) due to  \eqref{ls-nc-stepsize-hd} and \eqref{NCG-nu-2-ppty}. Therefore,    each iteration $k\in\mathbb{K}_{\epsilon,2}^2$ results in a sufficient descent in the objective value. Using the same argument in Part 1, we obtain that $ |\mathbb K_{\epsilon,2}^2|\le\mathcal{O}\Big(\Delta_fH_f^{\frac{1}{1+\nu}}\epsilon^{-\frac{2+\nu}{1+\nu}}\Big)$. Combining the upper bounds of $|\mathbb{K}_{\epsilon,2}^1|$ and $|\mathbb{K}_{\epsilon,2}^2|$ yields
   $ |\mathbb{K}_{\epsilon,2}| \leq \mathcal O(\Delta_f H_f^{\frac{1}{1+\nu}}\epsilon^{-\frac{2+\nu}{1+\nu}}).$ \vspace{0.2cm}

\noindent\textbf{Part 3.} Bounding $|\mathbb{K}_{\epsilon,3}|$.
For each $k\in\mathbb{K}_{\epsilon,3}$, the next gradient halves: $\|\nabla f(x^{k+1})\|\leq \|\nabla f(x^{k})\|/2$. Note that $\mathbb{K}_{\epsilon,1}$, $\mathbb{K}_{\epsilon,2}^1$, $\mathbb{K}_{\epsilon,2}^2$ are finite and $\mathbb{K}_{\epsilon}=\mathbb{K}_{\epsilon,1}\cup \mathbb{K}_{\epsilon,2}^1\cup \mathbb{K}_{\epsilon,2}^2\cup \mathbb{K}_{\epsilon,3}$, one can see that $\mathbb{K}_{\epsilon,3}$ can be partitioned into $|\mathbb{K}_{\epsilon,1}|+|\mathbb{K}_{\epsilon,2}^1|+|\mathbb{K}_{\epsilon,2}^2|+1$ disjoint subsets of \emph{consecutive} nonnegative integers.  Now, for those subsets that follow an iteration index $k\in \mathbb{K}_{\epsilon,1}$, we denote them by $\mathbb{K}_{\epsilon,3}^{i}$ with $i\in\mathcal I_1:=\{1,\cdots,\ell_1\}$. For those subsets that follow an iteration index $k\in\mathbb{K}^2_{\epsilon,2}$, we denote them by $\mathbb{K}_{\epsilon,3}^{i}$ with $i\in \mathcal I_2=\{\ell_1+1,\cdots,\ell_1+\ell_2\}$. For those subsets that follow an iteration index $k\in\mathbb{K}^1_{\epsilon,2}$, or in the case where the subset contains $0$, we denote them by $\mathbb{K}_{\epsilon,3}^{i}$ with $i\in\mathcal I_3:=\{\ell_1+\ell_2+1,\cdots,\ell_1+\ell_2+\ell_3\}$. Then clearly, $\ell_1\leq |\mathbb{K}_{\epsilon,1}|$, $\ell_2\leq |\mathbb{K}^2_{\epsilon,2}|$, and $\ell_3\leq |\mathbb{K}_{\epsilon,2}^1|+1$. 
\vspace{0.2cm}

\noindent \textbf{Part 3 (i).} Bounding $\sum_{i\in\mathcal{I}_3} |\mathbb{K}_{\epsilon,3}^{i}|$. Since $\epsilon\leq\|\nabla f(x^k)\|\leq U_g$ for all $k\in\mathbb{K}_{\epsilon}$, each subset has at most $\lceil\ln (U_g/\epsilon)/\ln 2\rceil+1$ iterations. Consequently, for those subsets following an $\mathbb{K}_{\epsilon,2}^1$ iteration, we have 
\vspace{-2mm}
\begin{equation*}
\sum_{i\in\mathcal{I}_3} |\mathbb{K}_{\epsilon,3}^{i}|\leq \left(|\mathbb{K}_{\epsilon,2}^1|+1\right)\left(\left\lceil\frac{\ln (U_g/\epsilon)}{\ln 2}\right\rceil+1\right) \leq  \mathcal{O}\left(\left(\ln\frac{1}{\epsilon}\right)^2\right).
\end{equation*}

For the rest subsets, directly applying the above bound would result in an undesirable $\mathcal O(\epsilon^{-\frac{2+\nu}{1+\nu}}\ln\frac{1}{\epsilon})$ complexity. To remove the extra logarithmic factor, a more careful analysis is required. For notational simplicity, let us suppose that the initial iteration of each $\mathbb{K}_{\epsilon,3}^i$ has gradient $\delta_i$, and the iteration  prior to each $\mathbb{K}^i_{\epsilon,3}$ has gradient norm $\bar{\delta}_i$, direction norm $\hat{\delta}_i$ and line-search step length $\hat\alpha_i$.\vspace{0.2cm}

\noindent \textbf{Part 3 (ii).} Bounding $\sum_{i\in\mathcal{I}_1} |\mathbb{K}_{\epsilon,3}^{i}|$. We further partition  $\mathcal I_1=\mathcal I_{\rm sol} \cup\mathcal I_{\rm nc}$ according to whether the subset follows an iteration $k\in \mathbb{K}_{\epsilon,1}$ with d\_type=SOL or NC, respectively. 

To bound $\sum_{i\in\mathcal{I}_{\rm sol}}|\mathbb{K}^i_{\epsilon,3}|$, by the relationship $\delta_i\le 5\bar\delta_i$ from Lemma \ref{lem:uni-sol-suf-desc}, we have \begin{equation*}
\begin{aligned}
\sum_{i\in\mathcal{I}_{\rm sol}}|\mathbb{K}_{\epsilon,3}^{i}| \leq & \sum_{i\in\mathcal{I}_{\rm sol}} \left(\left\lceil\frac{\ln (\delta_i/\epsilon)}{\ln 2}\right\rceil+1\right) 
 {\le}  (2+ 2\ln(5))|\mathcal{I}_{\rm sol}| +\sum_{i\in\mathcal{I}_{\rm sol}} 2\ln (\bar\delta_i/\epsilon) \\
 \leq& 6|\mathbb{K}_{\epsilon,1}| + \frac{4\Delta_f\max\{\gamma_0, 2\gamma_\nu(\epsilon)\}^{1/2}}{3ec_{\mathrm{sol}}\epsilon^{3/2}} \leq \mathcal{O}\left(\Delta_fH_f^{\frac{1}{1+\nu}}\epsilon^{-\frac{2+\nu}{1+\nu}}\right) 
\end{aligned}\end{equation*}
and the third inequality uses $|\mathcal I_{\rm sol}|\leq|\mathbb K_{\epsilon,1}|$, $\sum_{i\in\mathcal{I}_{\rm sol}}\bar{\delta}_i^{3/2}\le \Delta_f\max\{\gamma_0,2\gamma_\nu(\epsilon)\}^{1/2}/c_{\mathrm{sol}}$ (due to \eqref{ineq:uni-sol-suf-desc} and Lemma \ref{lem:upbd-gammak}), and Lemma \ref{lem: sum log upbd} with $(z_i,p,q,c)=(\bar\delta_i,3/2,3/2,\epsilon)$.

For $\sum_{i\in\mathcal{I}_{\rm nc}}|\mathbb{K}^i_{\epsilon,3}|$, we can further divide $\mathcal{I}_{\rm nc}$ into  $\mathcal{I}_{\rm nc}^{\leq}:=\{i\in\mathcal{I}_{\rm nc}:\hat\delta_i\leq \theta U_H\}$, and $\mathcal{I}_{\rm nc}^{>}:=\mathcal{I}_{\rm nc}\setminus\mathcal{I}_{\rm nc}^{\leq}$. For each $i\in \mathcal{I}_{\rm nc}^>$, by \eqref{ls-nc-stepsize-hd}, we know the last iteration in $|\mathbb K_{\epsilon,1}|$ will guarantee a descent of at least $$\frac{\eta}{2}\hat\alpha_i^2\hat\delta_i^3\geq\frac{\eta\theta^5U_H^3}{2\gamma_k^2}\geq\frac{\eta\theta^5U_H^3}{2\max\{\gamma_0,2\gamma_\nu(\epsilon)\}^2}\geq\frac{\eta\theta^5U_H^3}{2\max\{\gamma_0^2,64H_f^{4/(1+\nu)}\epsilon^{-(2-2\nu)/(1+\nu)}\}},$$
where the first inequality follows from $\hat\alpha_k\geq\theta/\gamma_k$, and the second inequality is because of $\gamma_k\leq\max\{\gamma_0,2\gamma_{\nu}(\epsilon)\}$. As the total
descent is controlled by $\Delta_f$, we have $|\mathcal{I}_{\rm nc}^{>}|\leq\mathcal O(\Delta_f\epsilon^{-\frac{2-2\nu}{1+\nu}})$ and
\begin{equation*}
\sum_{i\in\mathcal{I}_{\rm nc}^>} |\mathbb{K}_{\epsilon,3}^{i}|\leq |\mathcal{I}_{\rm nc}^>|\cdot\left(\left\lceil\frac{\ln (U_g/\epsilon)}{\ln 2}\right\rceil+1\right) \leq o\left(\epsilon^{-\frac{2+\nu}{1+\nu}}\right).
\end{equation*}
For $i\in \mathcal{I}_{\rm nc}^\leq$,  by \eqref{ls-nc-stepsize-hd} we know the last iteration,  indexed by $k_{i}$, in $|\mathbb K_{\epsilon,1}|$ will guarantee a descent of at least $\frac{\eta}{2}\hat\alpha_i^2\hat\delta_i^3\geq \frac{\eta}{2}\hat\alpha_i^2\hat\delta_i^2\epsilon^{\frac12}$, where we single out one $\hat\delta_i$ and lower bound it by \eqref{NCG-nu-2-ppty} under the fact that $\hat{\delta}_i\geq \varepsilon_{k_i} = (\gamma_{k_i}\|\nabla f(x^{k_i})\|)^{\frac12}\geq\epsilon^{\frac{1}{2}}$. Again, using $\Delta_f$ to control the total descent gives 
\begin{equation}
    \label{eqn:NC-nu-upperDescent-alg2}
    \sum_{i\in\mathcal{I}_{\rm nc}^{\leq}}\hat\alpha_i^2\hat\delta_i^2\leq\frac{2\Delta_f}{\eta\cdot \epsilon^{\frac{1}{2}}},
\end{equation}
which shall be used later as a summation upper bound in Lemma \ref{lem: sum log upbd}. With this in mind and using the inequality $\delta_i\leq 2\hat{\alpha}_iU_H\hat{\delta}_i$ provided by Lemma \ref{lem:uni-nc-suf-desc}, we have \begin{equation*}
\begin{aligned}
\sum_{i\in\mathcal{I}^\leq_{\rm nc}}|\mathbb{K}_{\epsilon,3}^{i}| \leq & \sum_{i\in\mathcal{I}^\leq_{\rm nc}} \left(\left\lceil\frac{\ln (\delta_i/\epsilon)}{\ln 2}\right\rceil+1\right) 
 {\le} \left(2+ 2\ln(2U_H)\right)|\mathcal{I}^\leq_{\rm nc}|+2\sum_{i\in\mathcal{I}^\leq_{\rm nc}} \ln ((\hat\alpha_i\hat\delta_i)/\epsilon)\\
{\le} & (2+2\ln(2U_H))|\mathbb K_{\epsilon,1}|+
\frac{4\Delta_f}{e\eta\epsilon} \leq \mathcal{O}\left(\Delta_fH_f^{\frac{1}{1+\nu}}\epsilon^{-\frac{2+\nu}{1+\nu}}\right) 
\end{aligned}\end{equation*}
where the third inequality follows from $|\mathcal{I}^\leq_{\rm nc}|\leq|\mathbb{K}_{\epsilon,1}|$, the summation bound \eqref{eqn:NC-nu-upperDescent-alg2}, and  Lemma \ref{lem: sum log upbd} with $(z_i,p,q,c)=(\hat\alpha_i\hat\delta_i,2,1/2,\epsilon)$.\vspace{0.2cm}

\noindent\textbf{Part 3 (iii). } Bounding $\sum_{i\in \mathcal I_2}|\mathbb{K}^i_{\epsilon,3}|$. We partition $\mathcal I_2=\mathcal I'_{\rm sol} \cup\mathcal I'_{\rm nc}$ according to whether the subset follows an iteration  $k\in \mathbb{K}_{\epsilon,2}^2$ with d\_type=SOL or NC, respectively. Before proceeding, we establish an upper bound for the next gradient norm with iteration index $k\in \mathbb{K}_{\epsilon,2}^2$. For d\_type=SOL, by \eqref{inexact-oracle},  the fact that $\alpha_k\leq1$, $\gamma_k\geq1$, and $\zeta_k\leq 1/2$ in Algorithm \ref{alg:NCG-hd}, and the first, second and last relations of Lemma \ref{lem:ppt-cg}(i), one has 
{
\setlength{\abovedisplayskip}{1pt}
\setlength{\belowdisplayskip}{0pt}
\begin{align}
&\|\nabla f(x^{k\!+\!1})\| = \|\nabla f(x^k \!+\!\alpha_k d^k)\|\notag\\
 \!\le&\, \mathcal{R}_1(x^{k}\!+\!\alpha_kd^k,x^k) \!+\! \alpha_k\|(\nabla^2 f(x^k) \!+\! 2\varepsilon_kI)d^k \!+\! \nabla f(x^k)\| \!+\! 2\alpha_k\varepsilon_k\|d^k\|\!+\!(1-\alpha_k)\|\nabla f(x^k)\|\notag\\
\!\le&\, \frac{\gamma_\nu(\|\nabla f(x^k)\|)}{4}\|d^k\|^2 \!+\! \frac{5}{4}\|\nabla f(x^k)\| \!+\!\frac{4\!+\!\zeta_k}{2}\varepsilon_k\|d^k\|
\!\le 2H_f^{\frac{2}{1+\nu}}\|\nabla f(x^k)\|^{\frac{2\nu}{1+\nu}}\!+\!6\|\nabla f(x^k)\|.\label{ineq: g_k+1 k22}
\end{align}
}

\noindent Thus, we have $\|\nabla f(x^{k+1})\|\leq  4H_f^{\frac{2}{1+\nu}}\|\nabla f(x^k)\|^{\frac{2\nu}{1+\nu}}$ if $\|\nabla f(x^k)\|\leq(H_f^{2}/3^{1+\nu})^{1/(1-\nu)}$ and $\nu\in(0,1)$.
For d\_type = NC, the same resoning in \eqref{ineq:upbd gk+1 nc} gives \(\|\nabla f(x^{k+1})\| \leq 2\alpha_kU_H\|d^k\|\) whenever  $\|d^k\|\leq \theta U_H$. With these next gradient bounds, we are ready to bound $\sum_{i\in\mathcal{I}'_{\rm sol}}|\mathbb{K}_{\epsilon,3}^{i}|$ and $\sum_{i\in\mathcal{I}'_{\rm nc}}|\mathbb{K}_{\epsilon,3}^{i}|$.

For $\sum_{i\in\mathcal{I}'_{\rm sol}}|\mathbb{K}_{\epsilon,3}^{i}|$, we consider the following two cases:

    \noindent \textbf{Case 1.} When $\nu\in(0,1)$,
 we further divide $\mathcal I'_{\rm sol}$ into  $\mathcal I_{\rm sol}^{'\leq}:=\{i\in\mathcal I'_{\rm sol}:\bar\delta_{i}\leq (H_f^{2}/3^{1+\nu})^{1/(1-\nu)}\}$ and $\mathcal I^{'>}_{\rm sol}:= \mathcal I'_{\rm sol}\setminus\mathcal I^{'\leq}_{\rm sol}$. 
For $\mathcal I_{\rm sol}^{'>}$, we have $\bar\delta_i> (H_f^{2}/3^{1+\nu})^{1/(1-\nu)}$ . By Line 12 of Algorithm \ref{alg:NCG-hd}, we know the last iteration in $|\mathbb K_{\epsilon,2}^2 |$, indexed by $k$, will guarantee a descent of at least $$c_{\mathrm{sol}}\gamma_k^{-1/2}\|\nabla f(x^k)\|^{3/2}\geq c_{\rm sol}\max\{\gamma_0,2\gamma_\nu(\epsilon)\}^{-1/2}H_f^{3/(1-\nu)}/3^{(3+3\nu)/(2-2\nu)}\geq \Omega(\epsilon^{\frac{1-\nu}{2+2\nu}}).$$ Using $\Delta_f$ to control the total descent yields  $|\mathcal I_{\rm sol}^{'>}|\leq \mathcal O(\Delta_f\epsilon^{-\frac{1-\nu}{2+2\nu}})$, which implies 
\begin{equation*}
\sum_{i\in\mathcal{I}_{\rm sol}^{'>}} |\mathbb{K}_{\epsilon,3}^{i}|\leq |\mathcal I_{\rm sol}^{'>}|\cdot\left(\left\lceil\frac{\ln (U_g/\epsilon)}{\ln 2}\right\rceil+1\right) \leq o\left(\epsilon^{-\frac{2+\nu}{1+\nu}}\right).
\end{equation*}
For $\mathcal I_{\rm sol}^{'\leq}$, the next gradient bound for $|\mathbb{K}_{\epsilon,2}^{2}|$ implies $\delta_i\leq4H_f^{2/(1+\nu)}\bar\delta_i^{2\nu/(1+\nu)}$.
It then follows that 
\begin{equation*}
\begin{aligned}
\sum_{i\in\mathcal{I}'^\leq_{\rm sol}}|\mathbb{K}_{\epsilon,3}^{i}| \leq & \sum_{i\in\mathcal{I}_{\rm sol}'^{\leq}} \left(\left\lceil\frac{\ln (\delta_i/\epsilon)}{\ln 2}\right\rceil+1\right) 
 \leq  \left(2+ 2\ln(4H_f^{2/(1+\nu)})\right)|\mathcal{I}_{\rm sol}'^{\leq}| +2\sum_{i\in\mathcal{I}_{\rm sol}^{'\leq}} \ln ({\bar\delta_i^{2\nu/(2+\nu)}}/\epsilon)\\
 \leq& (2+2\ln(4H_f^{2/(1+\nu)}))|\mathbb{K}_{\epsilon,2}^2| + {\frac{4\Delta_f\max\{\gamma_0,2\gamma_\nu(\epsilon)\}^{1/2}}{ec_{\mathrm{sol}}\epsilon^{1/2}}} 
 \leq\mathcal O(\Delta_fH_f^{\frac{1}{1+\nu}}\epsilon^{-\frac{2+\nu}{1+\nu}}),\\
\end{aligned}\end{equation*}
where the third inequality follows from $|\mathcal{I}^{'\leq}_{\rm sol}|\leq|\mathbb{K}_{\epsilon,2}^2|$, {$\sum_{i\in\mathcal{I}_{\rm sol}^{'\leq}}\bar\delta_i^{3/2}\le \Delta_f\max\{\gamma_0,2\gamma_\nu(\epsilon)\}^{1/2}/c_{\rm sol}$} (due to the definitions of $\mathcal{I}_2$ and $\mathbb{K}_{\epsilon,2}^2$) and Lemma \ref{lem: sum log upbd} with {$(z_i,p,q,c)=(\bar\delta_i^{\frac{2\nu}{2+\nu}},\frac{3(2+\nu)}{4\nu},\frac{1}{2},\epsilon)$}.

\noindent\textbf{Case 2.} When $\nu=1$, \eqref{ineq: g_k+1 k22} implies $\delta_i\leq (2H_f+6)\bar\delta_i$. Then we have
\begin{equation*}
\begin{aligned}
\sum_{i\in\mathcal{I}'_{\rm sol}}|\mathbb{K}_{\epsilon,3}^{i}|& \leq   \sum_{i\in\mathcal{I}'_{\rm sol}} \left(\left\lceil\frac{\ln (\delta_i/\epsilon)}{\ln 2}\right\rceil+1\right)    
{\le}  2(1+\ln(2H_f+6))|\mathcal{I}'_{\rm sol}| +2\sum_{i\in\mathcal{I}_{\rm sol}} \ln (\bar\delta_i/\epsilon) \\ 
\leq& 2\left(1+\ln(2H_f+6)\right)|\mathbb{K}_{\epsilon,2}^2| + {\frac{2\Delta_f\max\{\gamma_0,8H_f\}^{1/2}}{ec_{\mathrm{sol}}\epsilon}} 
= \mathcal{O}(\Delta_fH_f^{\frac{1}{2}}\epsilon^{-\frac{3}{2}}),  
\end{aligned}\end{equation*}
where the third inequality uses $|\mathcal{I}'_{\rm sol}|\leq|\mathbb{K}_{\epsilon,2}^2|$, $\sum_{i\in\mathcal{I}_{\rm sol}^{'\leq}}\bar\delta_i^{3/2}\le {\Delta_f\max\{\gamma_0,8H_f\}^{1/2}/c_{\rm sol}}$ (due to the definitions of $\mathcal{I}_2$ and $\mathbb{K}_{\epsilon,2}^2$) and Lemma \ref{lem: sum log upbd} with $(z_i,p,q,c)=(\bar\delta_i,3/2,1,\epsilon)$.   

Combining the above two cases, we obtain that $\sum_{i\in\mathcal{I}'_{\rm sol}}|\mathbb{K}_{\epsilon,3}^{i}|\leq O(\Delta_fH_f^{\frac{1}{1+\nu}}\epsilon^{-\frac{2+\nu}{1+\nu}})$ for all $\nu\in(0,1]$. Note from the proof of Lemma \ref{lem:uni-nc-suf-desc} that the result $\|\nabla f(x^{k+1})\|\le2\alpha_k U_H\|d^k\|$ whenever $\|d^k\|\le \theta U_H$ is also valid at iteration $k\in\mathbb{K}_{\epsilon,2}^2$ because $\alpha_k\ge\theta/\gamma_k$. Moreover, by the definition of $\mathcal I'_{\rm nc}$, one sees that $\mathbb{K}_{\epsilon,2}^i$, $i\in\mathcal I'_{\rm nc}$ follows an iteration $k\in\mathbb{K}_{\epsilon,2}^2$. Therefore, for $\mathcal I'_{\rm nc}$, we can use the same arguments as those for bounding $\sum_{i\in\mathcal{I}_{\rm nc}}|\mathbb{K}^i_{\epsilon,3}|$ in Part 3(ii). That is, we start by dividing $\mathcal I'_{\rm nc}$ into two disjoint subsets: $\mathcal{I}^{',\le}_{\rm nc}=\{i\in\mathcal I'_{\rm nc}:\hat{\delta}_i\le \theta U_H\}$ and $\mathcal{I}^{',>}_{\rm nc}=\mathcal{I}^{'}_{\rm nc}/\mathcal{I}^{',\le}_{\rm nc}$. We then bound $\sum_{i\in\mathcal{I}^{',\le}_{\rm nc}}|\mathbb{K}_{\epsilon,3}^i|$ and $\sum_{i\in\mathcal{I}^{',>}_{\rm nc}}|\mathbb{K}_{\epsilon,3}^i|$, respectively, following the same derivations used in Part 3(ii). As a result, we obtain $\sum_{i\in\mathcal{I}'_{nc}}|\mathbb{K}^i_{\epsilon,3}|\leq O(\Delta_fH_f^{\frac{1}{1+\nu}}\epsilon^{-\frac{2+\nu}{1+\nu}})$.


Lastly, summarizing Parts 1-3, we obtain that
\vspace{-1mm}
\begin{equation*}
    \begin{aligned}
        |\mathbb{K}_{\epsilon}|& =|\mathbb{K}_{\epsilon,1}|+|\mathbb{K}_{\epsilon,2}|+\sum_{i\in\mathcal{I}_{3}}|\mathbb{K}_{\epsilon,3}^i|+\sum_{i\in\mathcal{I}_{\rm sol}}|\mathbb{K}_{\epsilon,3}^i|+\sum_{i\in\mathcal{I}_{\rm nc}}|\mathbb{K}_{\epsilon,3}^i|+\sum_{i\in\mathcal{I}'_{\rm sol}}|\mathbb{K}_{\epsilon,3}^i|+\sum_{i\in\mathcal{I}'_{\rm nc}}|\mathbb{K}_{\epsilon,3}^i|\\
        &\leq\mathcal{O}\left(\Delta_fH_f^{\frac{1}{1+\nu}}\epsilon^{-\frac{2+\nu}{1+\nu}}\right).
    \end{aligned}
\end{equation*}
Hence, we complete the proof of this theorem. 
\hfill $\Box$


\subsubsection{Proof of Lemma \ref{lemma:gamma_bound_universal}.}\label{proof: gamma_bound_universal}\emph{Proof. }
First, let us define the radius constant $\delta$ in Theorem \ref{thm:loc-conv-2} as
\begin{equation}
    \label{eqn:defn-delta-Thm4}
    \delta=\min\bigg\{\frac{1}{2L_gH_f^{1/\nu}},\bigg(\frac{\mu}{2H_f}\bigg)^{\frac1\nu},\frac{1}{2L_g}\bigg(\frac{\mu}{2H_f}\bigg)^{\frac2\nu},\bigg[\frac{6L_g}{\mu}\bigg(\frac{2H_f}{\mu}\!+\!8\bigg(\frac{H_f}{\mu}\bigg)^{\frac{1}{1\!+\!\nu}}\!+\!L_g^{\frac{1}{2}}\bigg)\bigg]^{-\frac{1+\nu}{\nu}}\bigg\},
\end{equation} 
where $L_g:=\|\nabla^2 f(x^\ast)\|+1$. The same argument at the beginning of \ref{appdx:Thm-local-Known-Nu} yields that $B_{\delta}(x^\ast)\subseteq\mathscr{L}_f(x^0;r_d)$, the relation \eqref{eqn:loc-SC} holds for all $x\in B_{\delta}(x^\ast)$,
 once an iteration enters the $B_{\delta}(x^*)$ neighborhood, Algorithm \ref{alg:NCG-hd} will always execute the Newton step with $\alpha_k=1$ and d\_type = SOL.

We now show that $\gamma_{k+1}=\gamma_k$ for any $x^k\in B_{\delta}(x^*)$. From Algorithm  \ref{alg:NCG-hd} we know that $\gamma_k$ increases only when $k\in\mathbb K_{\epsilon,2}$. To prove this statement, it is suffice to show that  $k\notin\mathbb K_{\epsilon,2}$ when $x^{ k}\in B_{\delta}(x^*)$. Suppose for contradiction that $x^{ k}\in B_{\delta}(x^*)$ for some $ k\in\mathbb K_{\epsilon,2}$. 
Then, we have
{\setlength{\abovedisplayskip}{.5pt}
\setlength{\belowdisplayskip}{0pt}
\begin{align}
\|&x^{k+1}\!-x^\ast\|
 \overset{(i)}{\le} \|(\nabla^2\!f(x^k)+2\varepsilon_k I)^{-1}\nabla f(x^k)+x^k\!-x^\ast\|
   +\|d^k\!+(\nabla^2f(x^k)+2\varepsilon_k I)^{-1}\nabla\! f(x^k)\|\notag\\
&\le
   \|(\nabla^2\!f(x^k)+2\varepsilon_k I)^{-1}\|
   \Big(\|\nabla f(x^k)+\nabla^2f(x^k)(x^k-x^\ast)\|+2\varepsilon_k\|x^k-x^\ast\|
      +\zeta_k\|\nabla f(x^k)\|\Big)\notag\\ 
& \overset{(ii)}{\le}    \frac{2}{\mu}\Big(H_f\|x^k-x^\ast\|^{1+\nu}
   +2\gamma_k^\frac{1}{2}\|\nabla f(x^k)\|^\frac{1}{2}\|x^k-x^\ast\|
   +\|\nabla f(x^k)\|^\frac{3}{2}\Big),\label{ineq: upbd distance}
\end{align}
}

\noindent where (i) is by the triangle inequality, and (ii)  is by \eqref{ineq:desc-hd} and the definition of $\varepsilon_k,\zeta_k$ in Algorithm \ref{alg:NCG-hd}.
By reusing the step (ii) in the above inequality and  repeatedly using \eqref{eqn:loc-SC}, we have
\begin{equation*}\begin{aligned}
\|\nabla f(x^{k+1})\|
&\overset{(i)}{\le}
 \frac{2L_g}{\mu}\Big(H_f\|x^k\!-\!x^\ast\|^{1\!+\!\nu}
   \!+\!2\big(\gamma_\nu(\|\nabla f(x^k)\|)\|\nabla f(x^k)\|\big)^{\frac12}\|x^k\!-\!x^\ast\|
   \!+\!\|\nabla f(x^k)\|^\frac{3}{2}\Big)\\
   &{\le} \frac{2L_g}{\mu}\left(2H_f\mu^{\!-\!1}\|x^k\!-\!x^*\|^{\nu}\!+\!4H_f^{\frac{1}{1\!+\!\nu}}(2\mu^{\!-\!1})^{\frac{1}{1\!+\!\nu}}\|x^k\!-\!x^*\|^{\frac{\nu}{1\!+\!\nu}}\!+\!L_g^{\frac{1}{2}}\|x^k\!\!-\!\!x^*\|\right)\|\nabla f(x^k)\|\\
   &\overset{(ii)}{\le} \frac{2L_g}{\mu}\left(2H_f\mu^{\!-\!1}\!+\!4H^{\frac{1}{1\!+\!\nu}}(2\mu^{\!-\!1})^{\frac{1}{1\!+\!\nu}}\!+\!L_g^{\frac{1}{2}}\right)\|x^k\!-\!x^*\|^{\frac{\nu}{1\!+\!\nu}}\cdot\|\nabla f(x^k)\|\overset{(iii)}{\le}\frac{1}{3}\|\nabla f(x^k)\|,
\\
\end{aligned}\end{equation*}
where (i) uses $\gamma_k\leq \gamma_{\nu}(\|\nabla f(x^k)\|)$ for $k\in\mathbb K_{\epsilon,2}$, (ii) and (iii) are due to $x^k\in B_{\delta}(x^*)$.  This contradicts with $\|\nabla f(x^{k+1})\|\!>\!\frac{1}{2}\|\nabla f(x^k)\|$ for $k\!\in\!\mathbb K_{\epsilon,2}$. Thus, when $x^k\in B_{\delta}(x^*)$, we have $k\notin\mathbb K_{\epsilon,2} $, which implies that $\gamma_{k+1}=\gamma_k$.\hfill $\Box$

\subsubsection{Proof of Lemma \ref{lemma:capture}.}\emph{Proof.} Recall $\delta$ defined in \eqref{eqn:defn-delta-Thm4}, let $c\!=\!2\!(1\!+\!\frac{4L_g}{\mu}\!)$, and we define $S$ as 
\vspace{-2mm}
\begin{equation}\label{defn:S}
    S:=\Big\{x\;:\;\|x-x^*\|\leq\delta,\;f(x)-f(x^*)\leq\frac{\mu}{4}\big(\frac{\delta}{c}\big)^2\Big\}.
\end{equation}
Clearly, $S\subseteq B_{\delta}(x^\ast)$. By the previous discussion in Appendix \ref{proof: gamma_bound_universal}, we conclude that \eqref{eqn:loc-SC} holds for all $x\in S$, and Algorithm \ref{alg:NCG-hd} will always execute the Newton step with $\gamma_k=\gamma_{k+1}$ if $x_k\in S$.

We next prove that once $x_{k}\in S$, all future iterates will stay in $S$. For any $x^k\in S$, we have
\begin{equation*}\begin{aligned}
    \|\nabla f(x^k)\|\overset{(i)}{\geq}\frac{\zeta_k\varepsilon_k}{2}\|d^k\|\overset{(ii)}{\geq} &\|(\nabla^2f(x^k)+2\varepsilon_kI)d^k+\nabla f(x^k)\|\overset{(iii)}{\geq} \frac{\mu}{2}\|d^k\|-\|\nabla f(x^k)\|,
\end{aligned}\end{equation*}
where (i) is by $\zeta_k\leq1$ and the second inequality of Lemma~\ref{lem:ppt-cg}(i), (ii) is by the fourth inequality of Lemma~\ref{lem:ppt-cg}(i), and (iii) uses the triangle inequality and \eqref{eqn:loc-SC}. This implies that 
\(\|d^k\|\leq \frac{4}{\mu}\|\nabla f(x^k)\|\). And by \eqref{eqn:loc-SC}, we have $\frac{\mu}{4}\|x^k\!-\!x^*\|^2\!\leq\! f(x^k)\!-\!f(x^*)\!\leq\!\frac{\mu}{4}\big(\frac{\delta}{c}\big)^2$, which implies that $\|x^k\!-\!x^*\|\leq \frac{\delta}{c}$.
 Thus,
\begin{equation*}\begin{aligned}
    \|x^{k+1}-x^*\|=\|x^k+d^k-x^*\|
    \leq\|x^k-x^*\|+\|d^k\|
    \overset{(i)}{\leq} \Big(1+\frac{4L_g}{\mu}\Big)\|x^k-x^*\|<\delta,
\end{aligned}\end{equation*}
where (i) uses \eqref{eqn:loc-SC} and \(\|d^k\|\!\leq\! \frac{4}{\mu}\|\nabla\! f\!(x^k)\|\). Besides, since Algorithm \ref{alg:NCG-hd} is a descent method, one has
 $$f(x^{k+1})-f(x^*)\leq f(x^{k})-f(x^*)\leq \frac{\mu}{4}\Big(\frac{\delta}{c}\Big)^2 $$
Thus, $x^{k+1}\in S$. 
   Now suppose $x^{k_0}\in  S$ for some $k_0\geq 0$, by induction we have that $\{\!x^k\!\}_{k\geq k_0}\!\subseteq\! S$. \hfill $\Box$
\subsubsection{Proof of Theorem \ref{thm:loc-conv-2}.} \emph{Proof. } By Lemma \ref{lemma:gamma_bound_universal} and Lemma \ref{lemma:capture}, suppose $x^{k_0}\in S$, where $S$ is defined in \eqref{defn:S},  for some $k_0\ge 0$, we have that Algorithm \ref{alg:NCG-hd} will always execute the Newton step, $\{\!x^k\!\}_{k\geq k_0}\!\subseteq\! S$, and $\gamma_{k+1}=\gamma_k$ for $k\ge k_0$.  Therefore, $\gamma_k\!=\!\gamma_{k_0}$ for all $k\ge k_0$. Then, by \eqref{ineq: upbd distance} we have
\begin{equation*}\begin{aligned}
\|x^{k+1}\!-x^\ast\|
&\le\frac{2}{\mu}\Big(H_f\|x^k-x^\ast\|^{1+\nu}
   +2\gamma_k^\frac{1}{2}\|\nabla f(x^k)\|^\frac{1}{2}\|x^k-x^\ast\|
   +\|\nabla f(x^k)\|^\frac{3}{2}\Big)\\
   &\le\frac{2}{\mu}\Big(H_f\|x^k-x^\ast\|^{1+\nu}
   +(2\gamma_{k_0}^{\frac{1}{2}}+ L_g)L_g^{\frac{1}{2}}\|x^k-x^\ast\|^{\frac{3}{2}}\Big).\\
    \end{aligned}\end{equation*}
This shows $\|x^{k+1}\!-\!x^\ast\|=\mathcal{O}(\|x^{k}\!-\!x^\ast\|^{\min\{1+\nu,\frac{3}{2}\}})$ for $k\!\geq\! k_0$, i.e., a superlinear rate of order $\min\{1\!+\!\nu\!,\!\frac{3}{2}\}$.  

\hfill $\Box$
\section{Capped conjugate gradient method}\label{appendix:capped-CG}
In this work, we use the capped CG (\texttt{CappedCG}, Algorithm \ref{alg:capped-CG}) method \citep{royer2020newton} as a subroutine to solve the possibly \emph{indefinite} damped Newton systems. Consider the linear system $(H+2\sigma I)d = -g,$
where $g\neq0$ is a nonzero vector in $\bR^n$, $\sigma>0$, and $H\in\mathbb{S}^{n\times n}$ is a possibly indefinite symmetric matrix. \texttt{CappedCG} can efficiently return either (i) an approximate solution $d$ and d\_type=SOL with well controlled relative error or (ii) a negative curvature direction $d$ and d\_type=NC. The following lemma captures a few useful properties of \texttt{CappedCG}, which are taken from Lemma~3 of \citet{royer2020newton}.

\begin{algorithm}[h]
\caption{Capped conjugate gradient method $(d,\text{d\_type})=\texttt{CappedCG}(H, g, \zeta, \sigma)$}
\label{alg:capped-CG}
\footnotesize
\DontPrintSemicolon

\ShowLn\KwIn{symmetric matrix $H\in\bR^{n\times n}$, vector $g\neq0$, damping parameter $\sigma>0$, desired relative accuracy $\zeta\in(0,1)$.}
\ShowLn\KwOut{($d$, $\text{d\_type}$).\tcp*[r]{d\_type=SOL: solved, d\_type=NC: negative curvature}}   
\ShowLn\textbf{Initialize: }
\(U\leftarrow0,\;
\bar{H}\leftarrow H+2\sigma I,\;
\kappa\leftarrow\frac{U+2\sigma}{\sigma},\;
\hat{\zeta}\leftarrow\frac{\zeta}{3\kappa},\;
\tau\leftarrow\frac{\sqrt{\kappa}}{\sqrt{\kappa}+1},\;
T\leftarrow\frac{4\kappa^4}{(1-\sqrt{\tau})^2},
y^0\leftarrow 0,\ r^0\leftarrow g,\ p^0\leftarrow -g,\ j\leftarrow 0\).\;

\ShowLn \textbf{if} $(p^0)^{\rm T} \bar{H}p^0<\sigma\|p^0\|^2$ \textbf{then return} $(p^0, \rm NC)$.\; 

\ShowLn\While{\textnormal{TRUE}}{
  Compute $\alpha_j\leftarrow (r^j)^{\rm T} r^j/(p^j)^{\rm T}\bar{H}p^j$,\,\, 
  $y^{j+1}\leftarrow y^j+\alpha_jp^j$, and $r^{j+1}\leftarrow r^j+\alpha_j\bar{H}p^j$;\tcp*[r]{Begin Standard CG}
  Compute $\beta_{j+1}\leftarrow\|r^{j+1}\|^2/\|r^j\|^2$, and update  $p^{j+1}\leftarrow-r^{j+1}+\beta_{j+1}p^j$;\tcp*[r]{End Standard CG}
  Set $j\leftarrow j+1$, and update $U\leftarrow\max\left\{ U,\|Hp^0\|/\|p^0\|,\;\;\|Hp^j\|/\|p^j\|,\;\;\|Hy^j\|/\|y^j\|,\;\;\|Hr^j\|/\|r^j\| \right\}$; 
    \;Update $\kappa,\hat{\zeta},\tau,T$ by Line 3 accordingly.\; 
  \ShowLn\textbf{if} $(y^j)^{\rm T}\bar{H}y^j<\sigma\|y^j\|^2$ \textbf{then return} $(y^j, \rm NC)$.\; 
  \ShowLn\textbf{else if} $\|r^j\|\le\hat{\zeta}\|r^0\|$ \textbf{then return} $(y^j, \rm SOL)$.\; 
  \ShowLn\textbf{else if} $(p^j)^{\rm T}\bar{H}p^j<\sigma\|p^j\|^2$ \textbf{then return} $(p^j, \rm NC)$.\; 
  \ShowLn\ElseIf{$\|r^j\|>\sqrt{T}\tau^{j/2}\|r^0\|$}{
    Compute $\alpha_j, y^{j+1}$ as in the main loop above;\;
    Find $i\in\{0,\ldots,j-1\}$ such that
    \(
    (y^{j+1}-y^i)^{\rm T}\bar{H}(y^{j+1}-y^i)<\sigma\|y^{j+1}-y^i\|^2;
    \)\;
    \textbf{return $(y^{j+1}-y^i,\rm NC)$}. 
  }
}
\end{algorithm}
\begin{lemma}\label{lem:ppt-cg}
Let $(d,\text{d\_type})=\texttt{CappedCG}(H, g, \zeta, \sigma)$, then the following statements hold:
\begin{enumerate}[{\rm (i)}]
\item If d$\_$type=SOL, then  $d$ satisfies
\begin{equation*}\begin{aligned}
&\sigma\|d\|^2\le d^{\rm T}(H+2\sigma I)d,\qquad \|d\|\le 1.1\sigma^{-1}\| g\|,\\
&d^{\rm T} g=-d^{\rm T}(H+2 \sigma I)d,\qquad \|(H+2\sigma I)d+ g\|\le \zeta \sigma\|d\|/2.
\end{aligned}\end{equation*}
\item If d$\_$type=NC, then $d$ satisfies $d^{\rm T} g\le0$ and $d^{\rm T} H d/\|d\|^2<-\sigma$.
\end{enumerate}
\end{lemma}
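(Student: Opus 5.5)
The approach is to read each property directly off the exit test that triggers the return in \texttt{CappedCG}, combined with the standard conjugate gradient identities. Write $\bar H=H+2\sigma I$. Before any exit the curvature tests in Lines 4 and 9--11 have been passed, so $(p^i)^{\rm T}\bar H p^i\ge\sigma\|p^i\|^2>0$ for every $i$ already used. Hence the CG recursion is well defined with $\alpha_i>0$, and the classical CG relations hold. These are: $y^j\in\mathcal K_j:=\mathrm{span}\{p^0,\dots,p^{j-1}\}$; $r^j=\bar H y^j+g$; $r^j\perp\mathcal K_j$; and $(p^i)^{\rm T}g=(p^i)^{\rm T}r^0=-\|r^i\|^2$. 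The last relation follows from $r^i\perp p^0,\dots,p^{i-1}$ and $p^i=-r^i+\beta_ip^{i-1}$.

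\textbf{Part (i), d\_type=SOL.} Here $d=y^j$, returned only after the test in Line 9 fails. That failure is exactly the first inequality $\sigma\|d\|^2\le d^{\rm T}\bar H d$. For the third relation, $y^j\in\mathcal K_j$ and $r^j\perp\mathcal K_j$ give $0=(y^j)^{\rm T}r^j=(y^j)^{\rm T}\bar H y^j+(y^j)^{\rm T}g$, which is the claim. The second inequality then follows by Cauchy--Schwarz: $\sigma\|d\|^2\le d^{\rm T}\bar Hd=-d^{\rm T}g\le\|d\|\|g\|$, so $\|d\|\le\sigma^{-1}\|g\|\le1.1\sigma^{-1}\|g\|$.

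For the fourth inequality I use the exit test $\|r^j\|\le\hat\zeta\|g\|$ with $\hat\zeta=\zeta/(3\kappa)$. Since $U\ge\|Hy^j\|/\|y^j\|$ by the update in the main loop, we get $\|g\|\le\|r^j\|+\|\bar Hy^j\|\le\hat\zeta\|g\|+(U+2\sigma)\|d\|$. Therefore
\[
\|r^j\|\le\frac{\hat\zeta(U+2\sigma)}{1-\hat\zeta}\|d\|=\frac{\zeta\sigma}{3(1-\hat\zeta)}\|d\|\le\frac{\zeta\sigma}{2}\|d\|.
\]
The final step uses $\kappa\ge2$ and $\zeta<1$, so that $\hat\zeta<1/6$.

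\textbf{Part (ii), d\_type=NC.} The exits in Lines 4, 9 and 11 each return a vector $d$ with $d^{\rm T}\bar Hd<\sigma\|d\|^2$, which is equivalent to $d^{\rm T}Hd/\|d\|^2<-\sigma$. For the sign condition $d^{\rm T}g\le0$ I check each returned vector in turn:
\begin{itemize}
\item for $p^0=-g$ we have $(p^0)^{\rm T}g=-\|g\|^2$;
\item for $p^j$ we have $(p^j)^{\rm T}g=-\|r^j\|^2$;
\item for $y^j=\sum_{i<j}\alpha_ip^i$ and for $y^{j+1}-y^i=\sum_{l=i}^{j}\alpha_lp^l$, each term $\alpha_l(p^l)^{\rm T}g$ is nonpositive because $\alpha_l>0$.
\end{itemize}
For $y^{j+1}-y^i$ one must also check that $\alpha_j>0$. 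This holds because $p^j$ passed the test in Line 11 before Line 12 is reached.

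\textbf{Main obstacle.} The delicate point is the last exit (Line 12): why an index $i$ with $(y^{j+1}-y^i)^{\rm T}\bar H(y^{j+1}-y^i)<\sigma\|y^{j+1}-y^i\|^2$ must exist, so that the returned vector is well defined. I plan to argue by contradiction. If all such differences had $\bar H$-curvature at least $\sigma$, then CG would behave as on a $\sigma$-strongly convex quadratic restricted to the Krylov space, with effective condition number at most $\kappa$. The standard CG error bound would then force $\|r^j\|\le\sqrt T\tau^{j/2}\|r^0\|$, contradicting the trigger of Line 12. This is precisely the argument of \citet[Lemma~3 and its supporting CG convergence result]{royer2020newton}, and I would invoke it rather than rederive it.
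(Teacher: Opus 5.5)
Your proposal is correct, and it takes a more self-contained route than the paper. The paper gives no argument and imports the lemma wholesale from \citet[Lemma~3]{royer2020newton}.

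You instead read each property off the exit tests, combined with the classical CG identities. You correctly note that these identities remain valid for an indefinite $\bar H=H+2\sigma I$: every $p^i$ entering a CG step has passed a curvature test, so all $\alpha_i>0$ and every denominator is nonzero. The identity $(p^i)^{\rm T}g=-\|r^i\|^2$ needs a short induction through $(r^i)^{\rm T}r^0=0$ with $r^0=-p^0$, but the ingredients you list suffice.

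The only point where you still lean on the reference is the existence of the index $i$ in the final NC exit. That is the one genuinely nontrivial ingredient, since it rests on the CG convergence analysis with the running estimate $U$ and the induced condition number $\kappa$. The paper relies on exactly the same external result there.

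Your route buys three things:
\begin{itemize}
\item transparency about which test produces which inequality;
\item the sharper bound $\|d\|\le\sigma^{-1}\|g\|$, obtained via $(y^j)^{\rm T}r^j=0$, which shows the factor $1.1$ is slack;
\item an explicit check of the sign condition $d^{\rm T}g\le 0$ for each of the four NC exits.
\end{itemize}
The sign condition is in any case inessential in Algorithms~\ref{alg:NCG-pd} and~\ref{alg:NCG-hd}, because of the $-\sgn(d^{\rm T}g)$ re-normalization.

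The paper's citation buys brevity. The cost is that it delegates everything, including the check that the version of \texttt{CappedCG} used here, with its dynamic $U$ update, is covered by the original analysis.
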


\bibliographystyle{plainnat}
\bibliography{bib}

\end{document}